\newcommand{\R}{\mathbb{R}}
\def\trans{\mathrm{transversal}}
\def\rest{\mathrm{Rest}}
\def\osc{\mathrm{Osc}}
\def\kak{\mathrm{Kak}}
\def\ckak{\mathrm{Curvy Kak}}
\def\hess{\mathrm{Hess}}
\newcommand{\be}{\begin{equation}}
\newcommand{\en}{\end{equation}}
\newcommand{\ee}{\end{equation}}
\DeclareMathOperator{\supp}{supp}
\newcommand{\bt}{\begin{theorem}}
\newcommand{\et}{\end{theorem}}
\newcommand{\bp}{\begin{proof}}
\newcommand{\ep}{\end{proof}}
\newcommand{\bc}{\begin{cor}}
\newcommand{\ec}{\end{cor}}
\newcommand{\bl}{\begin{lemma}}
\newcommand{\el}{\end{lemma}}
\newcommand{\bprop}{\begin{prop}}
\newcommand{\eprop}{\end{prop}}
\def\R{{\hbox{\bf R}}}
\def\ex{{\mathcal{E}}}
\def\tubes{{\mathbb{T}}}
\newtheorem{theorem}{Theorem}[section]
\newtheorem{lemma}[theorem]{Lemma}
\newtheorem{definition}{Definition}
\newtheorem{prop}[theorem]{Proposition}
\newtheorem{conjecture}{Conjecture}
\newtheorem{cor}[theorem]{Corollary}
\newtheorem*{thma}{Tentative Conjecture}
\numberwithin{theorem}{section} \numberwithin{definition}{section}
\theoremstyle{definition}
\begin{document}

\title{Aspects of Multilinear Harmonic Analysis Related to Transversality}


\author{Jonathan Bennett}
\address{School of Mathematics \\
The University of Birmingham \\
The Watson Building \\
Edgbaston \\
Birmingham \\
B15 2TT \\
United Kingdom}
\email{J.Bennett@bham.ac.uk}
\thanks{Supported by ERC Starting Grant
307617. The author would like to thank the organisers of this most stimulating El Escorial conference for the invitation to deliver a short course on the material presented here.
First published in Contemporary Mathematics in 2014, published by the American Mathematical Society. \copyright2014 American Mathematical Society.
}

\subjclass[2000]{}


\begin{abstract}
The purpose of this article is to survey certain aspects of multilinear harmonic analysis related to notions of transversality. Particular emphasis will be placed on the multilinear restriction theory for the euclidean Fourier transform, multilinear oscillatory integrals, multilinear geometric inequalities, multilinear Radon-like transforms, and the interplay between them.
\end{abstract}

\maketitle


\section{Introduction}




In the 1970's fundamental work of Fefferman and Stein (such as \cite{Feff}) led to a dramatic change of perspective in euclidean harmonic analysis, placing elementary geometric concepts such as \emph{curvature} at the heart of the subject.
As a result many of its core open problems today concern phenomena where the presence of some underlying curved manifold plays a fundamental role.
One of the most important examples is the celebrated and far-reaching \emph{restriction conjecture for the Fourier transform},
which concerns the size of the restriction of the Fourier transform of an $L^p$ function to a curved submanifold of euclidean space. Further important examples include the Bochner--Riesz conjecture, combinatorial problems of \emph{Kakeya type}, and size and smoothing estimates for \emph{Radon-like transforms} and their singular and maximal variants -- see the articles \cite{SW} and \cite{SteinICM} for further discussion of these objects and this enduring perspective. As Stein points out in \cite{SteinICM}, such curvature-related problems tend to be intimately related to the theory of oscillatory integrals.


The last decade or so has seen the emergence of a wide-ranging \emph{multilinear} perspective on many of the central elements of modern harmonic analysis. This has involved the establishment of multilinear variants of classical objects and methodologies, such as singular integrals, multiplier theorems, maximal operators, weighted inequalities and interpolation (see \cite{MS}).
The purpose of this article is to describe aspects of this emerging multilinear perspective in the setting of problems related to curvature. As will become apparent, in a multilinear setting the relevance of curvature is often diminished, being naturally replaced by notions of \emph{transversality}. This fundamental change of nature (from a second order hypothesis to a first order hypothesis) turns out to be most significant in its impact upon the methodologies at our disposal. Perhaps the most striking example is the algebraic-geometric approach to the endpoint multilinear Kakeya conjecture developed by Guth in \cite{G}. We do not touch on this here,\footnote{We refer the reader to \cite{G} and \cite{CV} for a full discussion.} but instead emphasise the extraordinary effectiveness of the induction-on-scales method in multilinear settings.
As we shall see, such inductive arguments, which in this context have their origins in Bourgain \cite{Bo}, function particularly well in the setting of inequalities which are subordinate to transversality rather than curvature hypotheses.

Very recently Bourgain and Guth \cite{BG} have developed a method for applying transversal multilinear inequalities to classical curvature-related linear problems. This has provided a mechanism through which unexpected techniques may be brought to bear on difficult curvature-related problems in harmonic analysis and PDE; see \cite{BourgainMoment}, \cite{BourgainSch}, \cite{BourgainLap},\cite{BH} and \cite{LV}. In particular, this is responsible for the current state-of-the-art on the restriction conjecture for the Fourier transform. We do not attempt to properly survey these applications in this article.

This article is organised as follows. In Sections \ref{two}-\ref{MultSect} we discuss aspects of the recent multilinear perspective on the restriction conjecture for the Fourier transform, emphasising its relation with multilinear geometric and combinatorial (Kakeya) inequalities. In the final section (Section \ref{five}) we describe a bigger picture which ultimately seeks a broad multilinear analogue of the classical H\"ormander theorem on the $L^2$-boundedness of nondegenerate oscillatory integral operators. As may be expected, such multilinear oscillatory integral results are closely related to more manifestly geometric problems, such as bounds on multilinear Radon-like transforms.

This article is not intended to be a comprehensive survey of the area. Emphasis will be placed on collaborative work of the author, and in particular an exposition and reworking of aspects of B--Carbery--Tao \cite{BCT}. Mainly for reasons of space, we do not attempt to discuss the above-mentioned applications of these results in any detail.

\section{The linear restriction theory for the Fourier transform}\label{two}
There are of course other articles which survey aspects of the restriction theory for the Fourier transform; see, in particular, Tao's treatment \cite{TaoRestn}.
\subsection{The classical restriction conjecture and early progress}\label{twoone}
For $d\geq 2$, let $U$ be a compact neighbourhood of the origin in
$\mathbb{R}^{d-1}$ and $\Sigma :U\rightarrow \mathbb{R}^{d}$ be a smooth
parametrisation of a $(d-1)$-dimensional submanifold $S$ of $\mathbb{R}^d$
(for instance, $S$ could be a small portion of the unit sphere
$\mathbb{S}^{d-1}$, paraboloid or hyperplane). With $\Sigma$ we associate the \textit{extension
operator} $\ex$, given by
\begin{equation}\label{extn}
\ex g(\xi):=\int_{U}g(x)e^{i\xi\cdot\Sigma(x)}dx,
\end{equation}
where $g\in L^{1}(U)$ and $\xi\in\mathbb{R}^d$. This operator is sometimes
referred to as the \textit{adjoint restriction operator} since its
adjoint $\ex^{*}$ is given by $\ex^{*}f=\widehat{f}\circ\Sigma$,
where $\widehat{\;}\:$ denotes the $d$-dimensional Fourier
transform.  In addition to their intrinsic interest in harmonic analysis, Fourier extension operators are central to the study of dispersive partial differential equations. This is apparent on observing that if $\Sigma(x)=(x,|x|^2/2)$ (so that $S$ is a paraboloid), then $\mathcal{E}\widehat{u}_0(x,t)$ solves the Schr\"odinger equation $i\partial_t u+\Delta u=0$ with initial data $u(\cdot, 0)=u_0$. Here $(x,t)\in\mathbb{R}^{d-1}\times\mathbb{R}$. 

At this level of generality there are no $L^{p}(U) \to
L^{q}(\mathbb{R}^{d})$ estimates for $\ex$ other than the trivial
$$\|\mathcal{E}g\|_{L^\infty(\mathbb{R}^d)}\leq\|g\|_{L^1(U)};$$ a fact that is immediate upon taking $S$ to be a portion of (for example) the $d$th coordinate hyperplane and $\Sigma(x)=(x_1,\hdots, x_{d-1},0)$, since in this case we have $\mathcal{E}g(\xi)=\widehat{g}(\xi_1,\hdots,\xi_{d-1})$, which is of course independent of the component $\xi_d$. Thus if $g\not\equiv 0$ then $\mathcal{E}g\in L^q(\mathbb{R}^d)$ only if $q=\infty$.

However, as was first observed by E. M. Stein in the late 1960's,
if the submanifold $S$ has everywhere
non-vanishing gaussian curvature, then non-trivial $L^{p}(U) \to
L^{q}(\mathbb{R}^{d})$ estimates for $\ex$ may be obtained. This may be seen rather easily using the ``$TT^*$ method" along with the key stationary-phase fact that (if $S$ has non-vanishing curvature) the Fourier transform of a smooth density supported on $S$ has sufficient decay to belong to $L^p(\mathbb{R}^d)$ for some $p<\infty$. More precisely, an elementary calculation reveals that $\mathcal{E}\mathcal{E}^{*}f=\widehat{\mu}*f$, where $\mu$ is the $S$-carried measure defined by
\begin{equation}\label{mu}
\int \phi d\mu=\int_U\phi(\Sigma(x))dx,
\end{equation}
and so by Young's convolution inequality, $\|\mathcal{E}\mathcal{E}^*f\|_q\leq\|\widehat{\mu}\|_{q/2}\|f\|_{q'}$.
Combining this with the well-known stationary-phase estimate
\begin{equation}\label{decay}
|\widehat{\mu}(\xi)|\lesssim (1+|\xi|)^{-\frac{d-1}{2}}
\end{equation}
reveals that $\|\mathcal{E}\mathcal{E}^*f\|_q\leq\|f\|_{q'}$, and thus
\begin{equation}\label{L2rest}
\|\mathcal{E}g\|_{q}\lesssim\|g\|_2,
\end{equation}
provided $q>\frac{4d}{d-1}$. A refinement of these ideas leads to an improvement to $q\geq 2(d+1)/(d-1)$ in the $L^2$ estimate \eqref{L2rest}. This is known as the Stein--Tomas restriction theorem. In a dispersive PDE setting, the inequality \eqref{L2rest} is an example of a \emph{Strichartz estimate}.

It is often helpful to use a more manifestly geometric interpretation of the extension operator $\mathcal{E}$. In particular, by the definition \eqref{mu} of the $S$-carried measure $\mu$ we have $\mathcal{E}g=\widehat{fd\mu}$ where $f\in L^1(d\mu)$ is the ``lift" of $g$ onto $S$, given by $f\circ\Sigma=g$. The inequalities
\begin{equation}\label{parsurf}
\|\ex g\|_{L^{q}(\mathbb{R}^d)}\lesssim\|g\|_{L^{p}(U)}
\end{equation}
and
$$\|\widehat{fd\mu}\|_{L^q(\mathbb{R}^d)}\lesssim\|f\|_{L^p(d\mu)}$$ are thus identical, and in turn,
equivalent to
\begin{equation}\label{geominterp}
\|\widehat{fd\sigma}\|_{L^q(\mathbb{R}^d)}\lesssim\|f\|_{L^p(d\sigma)},
\end{equation}
where, as usual, $d\sigma$ denotes the induced Lebesgue measure on $S$.
Of course the implicit constant in \eqref{geominterp} should no longer depend on our particular parametrisation $\Sigma$ of $S$.
We refer to the linear operator
$
f\mapsto\widehat{fd\sigma}
$
as the \emph{extension operator associated with $S$}.

The classical
\textit{restriction conjecture} concerns the full range of
exponents $p$ and $q$ for which such bounds hold.
\begin{conjecture}[Linear Restriction]\label{LRC}
If $S$ has everywhere non--vanishing gaussian curvature,
$\tfrac{1}{q}<\tfrac{d-1}{2d}$ and $\tfrac{1}{q}
\leq\tfrac{d-1}{d+1}\tfrac{1}{p'}$, then
\begin{equation}\label{lrc}
\|\widehat{fd\sigma}\|_{L^q(\mathbb{R}^d)}\lesssim\|f\|_{L^p(d\sigma)},
\end{equation}
for all $g\in L^p(d\sigma)$.
\end{conjecture}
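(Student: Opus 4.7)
This is the notoriously open Linear Restriction Conjecture, so any serious ``plan'' must be understood as a description of the partial attacks that have been developed and an honest identification of where they break down. As a first step I would reduce to a model surface (the paraboloid $\Sigma(x)=(x,|x|^2/2)$ or a cap of the sphere) via a Whitney decomposition, parabolic rescaling, and a rotation to put the base point in canonical position; the nonvanishing curvature hypothesis is what makes this localization harmless. I would then verify the sharpness of the two necessary conditions: the Knapp example (a characteristic function of a $\delta$-cap dilated anisotropically) forces $1/q\leq \tfrac{d-1}{d+1}\tfrac{1}{p'}$, while the decay rate of $\widehat{d\sigma}$ together with a $\|\widehat{d\sigma}\|_{L^q}$ test forces $1/q<(d-1)/2d$. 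These confirm that there is no slack in the conjectured range.

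The $L^2$ side would be handled exactly as sketched in the excerpt. The $TT^*$ identity $\ex\ex^{*}f=\widehat{\mu}\ast f$, combined with the stationary-phase decay \eqref{decay} and Young's inequality, yields \eqref{L2rest} for $q>4d/(d-1)$; a standard analytic interpolation against a complex family of fractional surface measures sharpens this to the Stein--Tomas endpoint $q\geq 2(d+1)/(d-1)$, and interpolating this line with the trivial $L^1\to L^\infty$ bound produces a full quadrilateral of admissible exponents. In dimension $d=2$ this already settles the conjecture, but for $d\geq 3$ the $L^2$-method alone leaves a substantial gap.

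To attack the range $q<2(d+1)/(d-1)$ I would pass to a multilinear formulation, which is precisely the theme of this article. Decomposing $S$ into a union of caps $\{\tau\}$, one writes $|\ex g|^k$ as a sum of $k$-linear products $\ex g_{\tau_1}\cdots\ex g_{\tau_k}$, isolates the transversal off-diagonal contributions from the diagonal ones, and reduces matters to multilinear extension estimates governed by a Kakeya-like incidence count between transversal tube families. These multilinear inequalities are approachable by induction on scales in the spirit of Bourgain, Tao--Vargas--Vega and Bennett--Carbery--Tao, and can then be fed into the Bourgain--Guth broad/narrow algorithm to recover linear estimates strictly beyond $q=2(d+1)/(d-1)$.

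The fundamental obstruction --- and the reason the conjecture remains open for $d\geq 3$ --- is that the Bourgain--Guth descent pays a small positive power of the scale each time it zooms into the non-transversal caps, so the resulting range of $q$ improves with every improvement of the multilinear Kakeya or restriction inequality but never reaches the conjectured endpoint $1/q=(d-1)/(2d)$. Closing this last gap seems to require genuinely new input --- polynomial partitioning, sharper $\ell^2$ decoupling, or an as-yet-unknown geometric inequality --- rather than further refinement of the existing framework. I would therefore expect the plan above to reproduce only the current state of the art, and would flag this final multilinear-to-linear passage as the genuine obstacle.
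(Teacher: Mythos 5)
You have correctly recognised that the statement is a \emph{conjecture}, not a theorem, and that the paper itself offers no proof of it; the surrounding text only derives the necessary conditions (via the Knapp/cap example and the decay of $\widehat{d\sigma}$) and surveys partial progress. Your proposal is therefore not really a proof attempt but an accurate account of the state of the art, and as such it is consistent with the paper: the $TT^*$/Stein--Tomas route to the $L^2$ range, the resolution in $d=2$, the passage to bilinear and then $k$-linear/$d$-linear transversal estimates with Kakeya-type input, and the Bourgain--Guth broad/narrow reduction are exactly the threads the article develops. Your closing diagnosis --- that the multilinear-to-linear descent incurs losses that prevent reaching the conjectured endpoint and that genuinely new input is needed --- is likewise the honest conclusion the paper's narrative supports. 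No correction is needed; just be clear in your write-up that one is \emph{not} proving the conjecture but rather establishing the necessity of the stated conditions and surveying the known sufficient ranges.
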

This conjecture was settled for $d=2$ by Fefferman \cite{Feffrest} and Zygmund \cite{Z} in the early 1970s. In higher dimensions, the case $p=2$ (and thus $q\geq 2(d+1)/(d-1)$) is the content of the Stein--Tomas restriction theorem. There has been significant further progress by Bourgain \cite{Bo}, \cite{BoS}, Wolff \cite{WolffRest}, Moyua, Vargas, Vega and Tao \cite{MVV}, \cite{TVV}, \cite{TV}, \cite{TaoKM}. The most recent progress is due to Bourgain and Guth \cite{BG} (see also Temur \cite{Temur}), and uses the multilinear restriction theory of Carbery, Tao and the author \cite{BCT}; see the forthcoming Section \ref{MultSect}. See \cite{TaoRestn} for further historical detail.


The restriction conjecture is generated by testing \eqref{parsurf} on characteristic functions of small balls in $\mathbb{R}^{d-1}$, or equivalently, testing \eqref{lrc} on characteristic functions of small ``caps" on $S$. More specifically, if $f=\chi_\rho$, where $\rho$ is a cap of diameter $0<\delta\ll 1$, centred at a point $x_\rho\in S$, then
$$|\widehat{fd\sigma}(\xi)|=\Bigl|\int_\rho e^{ix\cdot\xi}d\sigma(x)\Bigr|=\Bigl|\int_\rho e^{i(x-x_\rho)\cdot\xi}d\sigma(x)\Bigr|\gtrsim
\delta^{d-1}\chi_{\rho^*}(\xi),$$
where $$\rho^*=\{\xi\in\mathbb{R}^d: |(x-x_\rho)\cdot\xi|\leq 1\;\;\mbox{ for all }\;\;x\in \rho\}.$$
The set $\rho^*$ is a certain dual object to $\rho$, containing a rectangular tube of the form $O(\delta^{-2})T$, where $T$ has $d-1$ short sides of length $\delta$ and one long side of length $1$ pointing in the direction normal to $S$ at $x_\rho$. We refer to $T$ as a $\delta$-\emph{tube}.
Since $S$ has nonvanishing curvature, $|\rho^*|\sim \delta^{-2d}|T|\sim\delta^{-(d+1)}$.
Now, if \eqref{lrc} holds then
\begin{equation}\label{necknapp}
\delta^{d-1}|\rho^*|^{1/q}\lesssim |\rho|^{1/p}
\end{equation}
uniformly in $\delta$,
and so $$\delta^{d-1}\delta^{-(d+1)/q}\lesssim\delta^{(d-1)/p}$$ uniformly in $\delta$.
Letting $\delta\rightarrow 0$ forces the claimed necessary condition $\tfrac{1}{q}
\leq\tfrac{d-1}{d+1}\tfrac{1}{p'}$. The remaining condition $\tfrac{1}{q}<\tfrac{d-1}{2d}$ is an integrability condition, and is a manifestation of the optimality of the decay estimate \eqref{decay}; see \cite{TaoRestn} for further details.

Progress on the restriction conjecture beyond the Stein--Tomas exponent $q= 2(d+1)/(d-1)$ has required techniques that are much more geometric, going beyond what the decay estimates for $\widehat{\mu}$ (or equivalently $\widehat{\sigma}$) allow. These advances, originating in Bourgain \cite{Bo}, relied upon a compelling interplay between the restriction conjecture and the celebrated \emph{Kakeya conjecture} from geometric combinatorics.
\subsection{Relation with the classical Kakeya conjecture}
The above example involving a $\delta$-cap on $S$, may be developed much further by considering input functions $f$ formed by summing many (modulated) characteristic functions of disjoint $\delta$-caps $\{\rho\}$ on $S$. \footnote{Observe that if $S$ is, say, a patch of paraboloid then the resulting dual tubes $\rho^*$ have different orientations. If no curvature condition is imposed on $S$ then the resulting tubes may be parallel (or coincident) and, furthermore, have arbitrary length. These considerations will become relevant later.} While the $L^p(d\sigma)$ norm of such a sum is straightforward to compute, estimates on the left-hand-side of \eqref{lrc} turn out to be very difficult. Indeed a standard randomisation (or Rademacher function) argument, originating in \cite{Feff} and \cite{BCSS}, reveals that the restriction conjecture implies the following form of the Kakeya conjecture from geometric combinatorics. First we recall that, for $0<\delta\ll 1$, a
$\delta$-\textit{tube} is defined to be any rectangular box $T$ in $\mathbb{R}^{d}$
with $d-1$ sides of length $\delta$ and one side of length $1$.
Let
$\mathbb{T}$ be an arbitrary collection of such $\delta$-tubes
whose orientations form a $\delta$-separated set of points on
$\mathbb{S}^{d-1}$.
\begin{conjecture}[Linear Kakeya]\label{LKC}
Let $\varepsilon>0$. If
$\tfrac{1}{q}\leq\tfrac{d-1}{d}$ and $\tfrac{d-1}{p}+\tfrac{1}{q}\leq d-1$,
then there is a constant $C_\varepsilon$, independent
of $\delta$ and the collection $\mathbb{T}$, such that
\begin{equation*}\label{kakeya}
\Bigl\|\sum_{T\in\mathbb{T}}\chi_{T}\Bigr\|_{L^{q}(\mathbb{R}^{d})} \leq
C_\varepsilon\delta^{\frac{d}{q}-\frac{d-1}{p'}-\varepsilon}\left( \#
\mathbb{T}\right)^{\frac{1}{p}}.
\end{equation*}
\end{conjecture}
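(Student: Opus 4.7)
My plan is to derive Conjecture~\ref{LKC} as a consequence of Conjecture~\ref{LRC}, via the randomisation argument flagged above. To each $T\in\mathbb{T}$ I associate a $\delta$-cap $\rho_T\subset S$ whose unit normal at the centre lies along the long axis of $T$; the nonvanishing Gaussian curvature of $S$ ensures that the Gauss map is locally a bi-Lipschitz bijection, so the caps $\{\rho_T\}$ may be taken essentially disjoint. For a choice of signs $\eta=(\eta_T)\in\{\pm 1\}^{\mathbb{T}}$, set $f_\eta:=\sum_T\eta_T\chi_{\rho_T}$. By disjointness $\|f_\eta\|_{L^p(d\sigma)}^p\sim(\#\mathbb{T})\,\delta^{d-1}$ uniformly in $\eta$, so Conjecture~\ref{LRC} gives
$$\|\widehat{f_\eta\,d\sigma}\|_{L^q(\mathbb{R}^d)}^q\lesssim (\#\mathbb{T})^{q/p}\,\delta^{(d-1)q/p}$$
for every $\eta$.

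The next step converts this randomised inequality into a Kakeya bound. Averaging over $\eta$ and applying Khintchine's inequality (with Fubini) turns the left-hand side into the $L^q$-norm of the square function, yielding
$$\Bigl\|\Bigl(\sum_T|\widehat{\chi_{\rho_T}\,d\sigma}|^{2}\Bigr)^{1/2}\Bigr\|_{L^q(\mathbb{R}^d)}^q\lesssim (\#\mathbb{T})^{q/p}\,\delta^{(d-1)q/p}.$$
I now invoke the Knapp-style pointwise estimate $|\widehat{\chi_{\rho_T}\,d\sigma}(\xi)|\gtrsim\delta^{d-1}\chi_{\rho_T^*}(\xi)$ recalled in the paragraph preceding Conjecture~\ref{LKC}, together with the observation that $\rho_T^*$ contains the dilated tube $\delta^{-2}\widetilde T_T$, where $\widetilde T_T$ is a $\delta$-tube parallel to $T$. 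Because indicators are idempotent, the square function collapses to $\delta^{d-1}\bigl(\sum_T\chi_{\delta^{-2}\widetilde T_T}\bigr)^{1/2}$, and after rescaling by a factor of $\delta^{2}$ (which contributes a Jacobian of $\delta^{-2d}$) to replace the inflated tubes by unit-scale $\delta$-tubes, one extracts an inequality of the form
$$\Bigl\|\sum_{T\in\mathbb{T}}\chi_{\widetilde T_T}\Bigr\|_{L^{q/2}(\mathbb{R}^d)}\lesssim (\#\mathbb{T})^{2/p}\,\delta^{\alpha}$$
for an explicit exponent $\alpha=\alpha(p,q,d)$. Identifying $\{\widetilde T_T\}$ with $\mathbb{T}$ up to a harmless uniform rotation then delivers a Kakeya-type inequality of the form asserted.

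The principal obstacle will be exponent bookkeeping. Setting the Kakeya exponents $q_K:=q/2$ and $p_K:=p/2$, one must verify that as $(p,q)$ runs through the restriction range $\tfrac{1}{q}<\tfrac{d-1}{2d}$, $\tfrac{1}{q}\leq\tfrac{d-1}{d+1}\tfrac{1}{p'}$, the induced pair $(p_K,q_K)$ covers a suitable subrange of the Kakeya range $\tfrac{1}{q_K}\leq\tfrac{d-1}{d}$, $\tfrac{d-1}{p_K}+\tfrac{1}{q_K}\leq d-1$, and that $\alpha$ matches (or is dominated by) the target value $\tfrac{d}{q_K}-\tfrac{d-1}{p_K'}$. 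The argument itself introduces no genuine $\delta^{-\varepsilon}$ loss, so the $C_\varepsilon\delta^{-\varepsilon}$ factor in the statement reflects only the customary endpoint softening built into the Kakeya formulation; full coverage of the Kakeya range should then follow by interpolating this critical estimate with the trivial bound $\|\sum_T\chi_T\|_\infty\leq\#\mathbb{T}$.
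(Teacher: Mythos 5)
There is no proof of this statement in the paper to compare against, because the statement you have been asked about is precisely \emph{Conjecture}~\ref{LKC}: it is an open problem for $d\ge 3$ (the paper records that it is known only for $d=2$, by C\'ordoba). Your proposal derives it as a consequence of Conjecture~\ref{LRC}, the Linear Restriction Conjecture, which is likewise open for $d\ge 3$. Deducing one open conjecture from another does not prove either of them. What you have actually sketched is the \emph{implication} (Restriction)\;$\Rightarrow$\;(Kakeya), and the paper itself flags this implication in the sentence immediately preceding the statement: ``a standard randomisation (or Rademacher function) argument, originating in \cite{Feff} and \cite{BCSS}, reveals that the restriction conjecture implies the following form of the Kakeya conjecture.'' So your argument reproduces a well-known reduction, not a proof of the conjecture, and the paper never claims otherwise.

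Two further cautions even about the implication itself. First, the exponent identification you propose, $q_K=q/2$ and $p_K=p/2$, does not in fact make the two conjectured exponent curves correspond: if you carry out the bookkeeping (Knapp bound $\delta^{d-1}$, Jacobian $\delta^{-2d}$ from the rescaling $\xi\mapsto\delta^{2}\xi$, input normalisation $\|f_\eta\|_{L^p(d\sigma)}\sim((\#\mathbb{T})\delta^{d-1})^{1/p}$) you will find that saturating $\tfrac{1}{q}=\tfrac{d-1}{d+1}\tfrac{1}{p'}$ lands on the Kakeya boundary $\tfrac{d-1}{p_K}+\tfrac{1}{q_K}=d-1$ with $p_K=p/2$ only at an isolated exponent, so the correct matching of $p_K$ to $p$ must be worked out rather than asserted. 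Second, the randomisation argument with signed sums of disjoint caps, plus interpolation with the trivial $L^\infty$ bound, does not automatically cover the whole stated Kakeya range; this range-matching is exactly the ``exponent bookkeeping'' you defer, and it is where most of the substance of the implication lies.
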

A simple consequence of this conjecture is the \emph{Kakeya set conjecture}, which asserts that any (Borel) set in
$\mathbb{R}^d$ which contains a unit line segment in every direction must
have full Hausdorff dimension. See the survey articles \cite{wolff:survey} or \cite{TaoBlog}.

Conjecture \ref{LKC} was proved for $d=2$ by C\'ordoba in 1977 \cite{Co}. For details of
the subsequent
progress in higher dimensions see \cite{TaoBlog}.

In the early 1990's Bourgain developed a partial reverse mechanism, showing that progress on the Kakeya conjecture may be used to make progress on the restriction conjecture, and variants of this mechanism (developed by Wolff \cite{Wol} and Tao \cite{TaoKM}) have played a central role in all subsequent progress on the restriction conjecture. This mechanism may be interpreted as a certain inductive argument (or recursive inequality), through which progress on Conjecture \ref{LKC} may be transferred by iteration to progress on Conjecture \ref{LRC}. As this mechanism will feature heavily in Section \ref{MultSect}, we refrain from entering into detail here, and refer the reader to \cite{TaoRestn} for further discussion.



In the 1990s another new perspective was introduced to the restriction problem which also aimed to exploit curvature in a more geometric way: this was the so-called \textit{bilinear approach}.

\section{The bilinear restriction theory: the emergence of transversality}
\subsection{The bilinear restriction conjecture}
A bilinear perspective on the Fourier restriction problem emerged in the 1990's in work of Bourgain, and was later developed systematically by Tao, Vargas and Vega in \cite{TVV}.
A motivating observation was that if $S$ has everywhere nonvanishing gaussian curvature and $f_1,f_2$ are functions on $S$ with \emph{separated supports} then the inequality
\begin{equation}\label{blr}\|\widehat{f_1d\sigma}\widehat{f_2d\sigma}\|_{L^{q/2}(\mathbb{R}^d)}\lesssim\|f_1\|_{L^{p}(d\sigma)}\|f_2\|_{L^{p}(d\sigma)}
\end{equation} typically holds for a much wider range of exponents $p$, $q$ than what is predicted by an application of H\"older's inequality and the restriction conjecture. On one level this so-called ``bilinear improvement" on \eqref{lrc} may be understood through a certain \emph{transversality property inherited from the curvature of the submanifold} $S$; namely if $S$ has everywhere non-vanishing gaussian curvature and $S_1,S_2\subset S$ are separated, then they are generically transversal.

In order to formulate a natural bilinear analogue of Conjecture \ref{LRC}, we let $S_1$ and $S_2$ be compact smooth $(d-1)$-dimensional submanifolds of $\mathbb{R}^d$. We shall assume that $S_1$ and $S_2$ are \emph{transversal} in the sense that if $v_1$ and $v_2$ are unit normal vectors to $S_1$ and $S_2$ respectively, then $|v_1\wedge v_2|$ (the angle between $v_1$ and $v_2$) is bounded below by some constant $c>0$ uniformly in the choices of $v_1$ and $v_2$.\footnote{This notion of transversality is slightly different from the classical notion of transversality from differential geometry; in particular, whether $S_1$ and $S_2$ intersect is not relevant here.} Finally, let $d\sigma_1$ and $d\sigma_2$ denote the induced Lebesgue measure on $S_1$ and $S_2$ respectively.

\begin{conjecture}[Bilinear restriction conjecture]\label{BRC}
Suppose $S_1$ and $S_2$ are transversal with everywhere positive principal curvatures.\footnote{This conjecture is known to be false for surfaces with nonvanishing curvatures of different sign, such as the saddle; see \cite{Lee} and \cite{Vargas}.} If $\frac{1}{q}<\frac{d-1}{d}$, $\frac{d+2}{2q}+\frac{d}{p}\leq d$ and $\frac{d+2}{2q}+\frac{d-2}{p}\leq d-1$ then
\begin{equation}\label{brc}
\|\widehat{f_1d\sigma_1}\widehat{f_2d\sigma_2}\|_{L^{q/2}(\mathbb{R}^d)}\lesssim\|f_1\|_{L^p(d\sigma_1)}\|f_2\|_{L^p(d\sigma_2)}
\end{equation}
for all $f_1\in L^p(d\sigma_1)$ and $f_2\in L^p(d\sigma_2)$.
\end{conjecture}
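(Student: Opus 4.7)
The natural starting point is the extension formulation: setting $\mathcal{E}_j f_j = \widehat{f_j d\sigma_j}$, the estimate \eqref{brc} becomes
\[
\|\mathcal{E}_1 f_1 \cdot \mathcal{E}_2 f_2\|_{L^{q/2}(\mathbb{R}^d)} \lesssim \|f_1\|_{L^p(d\sigma_1)}\|f_2\|_{L^p(d\sigma_2)}.
\]
Necessity of the stated conditions on $(p,q)$ is verified by testing against Knapp caps on $S_1$ and $S_2$ (which produces the conditions involving $d+2$, via dual $\delta$-tubes of length $\delta^{-2}$), and by a Rademacher randomisation argument that transfers the bilinear restriction estimate to a bilinear Kakeya-type bound for transversal families of $\delta$-tubes --- the direct bilinear counterpart of the reduction producing Conjecture \ref{LKC}.

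For the positive direction, I would work at a large frequency parameter $R$ and introduce a \emph{wave packet decomposition}: at scale $\delta = R^{-1/2}$, each extension $\mathcal{E}_j f_j$ restricted to the ball $B(0,R)$ admits an essentially orthogonal representation $\sum_{T} a_T \psi_T$, where $\psi_T$ is a smooth bump concentrated on a $\delta$-tube $T$ of length $R$ pointing in a direction normal to $S_j$. Transversality enters decisively here: since normals to $S_1$ and $S_2$ make angles bounded below, any pair of tubes $T_1 \in \mathbb{T}_1$, $T_2 \in \mathbb{T}_2$ intersects in volume $\lesssim \delta^d$ rather than the $R\delta^{d-1}$ typical of the parallel case. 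Combined with an $L^2$ orthogonality input (Plancherel applied to each $\mathcal{E}_j f_j$ separately) and the sharpest available transversal bilinear Kakeya estimate for $\sum_{T_1}\chi_{T_1}\cdot\sum_{T_2}\chi_{T_2}$, this yields a base bound at scale $R$ that is only a small $R^{\varepsilon}$-loss away from the target.

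The engine that removes this loss is an \emph{induction on scales} of Bourgain--Wolff--Tao type. Writing $C(R)$ for the best constant in the desired bilinear inequality restricted to inputs whose extensions are spatially localised in $B(0,R)$, one decomposes each $S_j$ at an intermediate scale $R^{1/2}$, applies the inductive hypothesis on caps of that size after restricting the extensions, and reassembles using the wave-packet/Kakeya input just described. Careful pigeonholing over amplitudes $|a_T|$ and over tube multiplicities then produces a recursive inequality of the form $C(2R) \le (1 + O(R^{-c})) C(R)$; iterating from an elementary Cauchy--Schwarz base case gives $C(R) \lesssim_\varepsilon R^{\varepsilon}$, and a standard $\varepsilon$-removal lemma (\`a la Tao) upgrades this to the desired bound in the claimed range.

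The main obstacle, as always in this circle of ideas, is the \emph{endpoint}: the induction-on-scales bootstrap intrinsically loses $R^{\varepsilon}$, and the transversal bilinear Kakeya input it consumes is itself only conjectural at the corresponding endpoint (Conjecture \ref{LKC}-type). The critical cases where one or more of the exponent inequalities become equalities must therefore be handled by refined combinatorial or orthogonality arguments, and in $d \ge 3$ they remain the deepest open feature of Conjecture \ref{BRC}.
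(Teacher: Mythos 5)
The statement you were asked to prove is labelled as a \emph{conjecture} in the paper, and no proof is given (nor does one exist): Conjecture \ref{BRC} is an open problem for $d\ge 3$. The paper only records partial progress --- the curvature-free $L^2\times L^2\to L^2$ bound \eqref{2dbil}, the Tao--Vargas--Vega $L^{4d/(3d-2)}$ estimate, and Tao's theorem \eqref{KM} giving $q>2(d+2)/d$ on $L^2\times L^2$ --- and points to Proposition \ref{bilinimplieslin} as the route from the bilinear conjecture back to the linear one. There is therefore no proof in the paper against which your proposal could be compared.

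Your sketch is, in substance, a fair description of the Wolff--Tao wave-packet/induction-on-scales machinery that underlies the known partial results, and you are right that transversality converts tube--tube intersections from $R\delta^{d-1}$ to $O(\delta^d)$ and that this feeds a bilinear Kakeya input. But two points should be flagged. First, your final paragraph concedes that the endpoint cases and the full exponent range are not reached by the argument --- so what you have written is not a proof of the statement but an outline of why certain special cases (essentially Tao's \eqref{KM}) are accessible; you should not present it as a proof of Conjecture \ref{BRC}. Second, the recursive inequality you quote, $C(2R)\le(1+O(R^{-c}))C(R)$, is the shape that appears in the \emph{nonlinear Brascamp--Lieb} induction \eqref{radrec}, not in the restriction induction; the latter gives recursions of the form $C(R)\lesssim R^{\varepsilon}C(R^{1/2})\cdot(\text{Kakeya constant})$, which upon iteration necessarily loses $R^{\varepsilon}$ (or, with Guth's endpoint Kakeya input, $(\log R)^{\kappa}$, as in the passage from Proposition \ref{KRimpliesR} to Theorem \ref{bcttheoremrevisted}). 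So even the bookkeeping of the loss in your sketch needs correction, and the $\varepsilon$-removal step you invoke only recovers open ranges of exponents, not the critical lines where the inequalities in Conjecture \ref{BRC} become equalities.
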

As with the linear restriction conjecture, the conjectured exponents above may be generated by testing \eqref{brc} on characteristic functions of certain caps. In this case the appropriate caps are eccentric and carefully orientated; see \cite{TVV} for the details.

It is of course natural to question the relative significance of the transversality and curvature hypotheses in Conjecture \ref{BRC}. If we were to drop the transversality hypothesis, the conjectured exponents $p,q$ would simply shrink to those of the linear conjecture. On the other hand, and by contrast with the linear situation, if we drop the curvature hypothesis (retaining the transversality), then it is not difficult to see that nontrivial estimates may be obtained. In particular, in all dimensions we have
\begin{equation}\label{2dbil}
\|\widehat{f_1d\sigma_1}\widehat{f_2d\sigma_2}\|_{L^2(\mathbb{R}^d)}\lesssim\|f_1\|_{L^2(d\sigma_1)}
\|f_2\|_{L^2(d\sigma_2)}.
\end{equation}
Let us see why this is true.
By Plancherel's theorem this estimate is equivalent to $$\|(f_1d\sigma_1)*(f_2d\sigma_2)\|_{L^2(\mathbb{R}^d)}\lesssim\|f_1\|_{L^2(d\sigma_1)}
\|f_2\|_{L^2(d\sigma_2)}.$$
By interpolation with the trivial $\|(f_1d\sigma_1)*(f_2d\sigma_2)\|_{L^1(\mathbb{R}^d)}\lesssim\|f_1\|_{L^1(d\sigma_1)}
\|f_2\|_{L^1(d\sigma_2)}$,
it suffices to prove that $$\|(f_1d\sigma_1)*(f_2d\sigma_2)\|_{L^\infty(\mathbb{R}^d)}\lesssim\|f_1\|_{L^\infty(d\sigma_1)}
\|f_2\|_{L^\infty(d\sigma_2)}.$$
However, $\|(f_1d\sigma_1)*(f_2d\sigma_2)\|_{L^\infty(\mathbb{R}^d)}\leq\|f_1\|_{\infty}\|f_2\|_{\infty}\|d\sigma_1*d\sigma_2\|_{L^\infty(\mathbb{R}^d)}$,
reducing matters to the elementary fact that $d\sigma_1*d\sigma_2\in L^\infty(\mathbb{R}^d)$. We point out that for $d=2$, the inequality \eqref{2dbil} is actually strictly stronger than Conjecture \ref{BRC}, being at the omitted endpoint $(p,q)=(2,4)$ and moreover, holding in the absence of a curvature hypothesis.

As Conjecture \ref{BRC} suggests, if we include both transversality and curvature then for $d\geq 3$ we may improve on \eqref{2dbil} considerably; for example, Tao, Vargas and Vega \cite{TVV}\footnote{Following the $d=3$ case in Moyua, Vargas and Vega \cite{MVV}.} showed that
$$
\|\widehat{f_1d\sigma_1}\widehat{f_2d\sigma_2}\|_{L^2(\mathbb{R}^d)}\lesssim\|f_1\|_{L^{\frac{4d}{3d-2}}(d\sigma_1)}
\|f_2\|_{L^{\frac{4d}{3d-2}}(d\sigma_2)},
$$
and Tao \cite{TaoKM} showed that
\begin{equation}\label{KM}
\|\widehat{f_1d\sigma_1}\widehat{f_2d\sigma_2}\|_{L^{q/2}(\mathbb{R}^d)}\lesssim\|f_1\|_{L^{2}(d\sigma_1)}
\|f_2\|_{L^{2}(d\sigma_2)}
\end{equation}
for all $q>\frac{2(d+2)}{d}$. Tao's proof of \eqref{KM} (which builds on \cite{Wol}) proceeds by a sophisticated variant of Bourgain's inductive method \cite{Bo}, involving a bilinear Kakeya-type ingredient; see \cite{TaoRestn} for further discussion.


\subsection{From bilinear to linear}\label{twothree}
Arguably the most valuable feature of the bilinear restriction conjecture is the fact that it (if formulated in an appropriately scale-invariant way -- see \cite{TVV}) implies the linear restriction conjecture. For technical reasons related to scale-invariance, we confine our attention to the situation where $S$ is a compact subset of a paraboloid.
\begin{prop}[Tao--Vargas--Vega \cite{TVV}]\label{bilinimplieslin}
Suppose that $S$ is a compact subset of a paraboloid and that $S_1$ and $S_2$ are transversal subsets of $S$. If $\tfrac{1}{q}<\tfrac{d-1}{2d}$, $\tfrac{1}{q}
\leq\tfrac{d-1}{d+1}\tfrac{1}{p'}$ and the conjectured bilinear inequality
\begin{equation*}
\|\widehat{f_1d\sigma}\widehat{f_2d\sigma}\|_{L^{\widetilde{q}/2}(\mathbb{R}^d)}\lesssim\|f_1\|_{L^{\widetilde{p}}(S_1)}\|f_2\|_{L^{\widetilde{p}}(S_2)}
\end{equation*}
holds for all $(\widetilde{p}, \widetilde{q})$ in a neighbourhood of $(p, q)$ then the conjectured linear inequality
\begin{equation*}
\|\widehat{fd\sigma}\|_{L^{q}(\mathbb{R}^d)}\lesssim\|f\|_{L^{p}(d\sigma)}
\end{equation*}
holds.
\end{prop}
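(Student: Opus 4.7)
The plan is to prove Proposition \ref{bilinimplieslin} via a Whitney decomposition of the diagonal together with parabolic rescaling, which is the scheme introduced by Tao, Vargas and Vega in \cite{TVV}. The paraboloid hypothesis is essential because of the exact scaling symmetry it enjoys.

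First, I would decompose $f = \sum_{\tau} f_\tau$ where $\tau$ ranges over a finitely-overlapping family of caps on $S$ of dyadic sizes $\delta = 2^{-k}$, and expand
$$|\widehat{fd\sigma}|^2 = \sum_{\tau,\tau'}\widehat{f_\tau d\sigma}\;\overline{\widehat{f_{\tau'}d\sigma}}.$$
A Whitney-type decomposition of $S\times S$ away from the diagonal reorganises the index set into a disjoint union, over scales $k$, of pairs $(\tau,\tau')$ of common diameter $\sim 2^{-k}$ separated by distance $\sim 2^{-k}$. Since on the paraboloid the normal direction varies linearly with the base point, the unit normals to such caps are separated by $\sim 2^{-k}$, so after rescaling the pair becomes transversal in the sense of Conjecture \ref{BRC}.

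Second, I would apply parabolic rescaling to each Whitney pair. For the paraboloid the affine transformation $(x,x_d)\mapsto(\delta x,\delta^2 x_d)$, composed with a Galilean shift that centres the cap at the origin, maps a $\delta$-cap to a cap of size $O(1)$ and preserves the class of paraboloids. Tracking the Jacobians on both the surface-measure side and the frequency side converts the hypothesised bilinear estimate for $O(1)$ transversal caps into an inequality of the form
$$\|\widehat{f_\tau d\sigma}\,\widehat{f_{\tau'} d\sigma}\|_{L^{\tilde q/2}(\R^d)}\lesssim \delta^{\alpha(\tilde p,\tilde q)}\|f_\tau\|_{L^{\tilde p}(d\sigma)}\|f_{\tau'}\|_{L^{\tilde p}(d\sigma)},$$
where $\alpha(\tilde p,\tilde q)$ is an explicit affine function of $1/\tilde p$ and $1/\tilde q$ which is strictly positive precisely when $(\tilde p,\tilde q)$ lies strictly inside the target linear region.

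Third, I would reassemble. Using almost-orthogonality of the pieces $\widehat{f_\tau d\sigma}$ (their frequency supports are essentially disjoint at scale $\delta^{-1}$), together with H\"older and Minkowski in the cap index and the finite overlap of the caps, the Whitney contribution at fixed scale $2^{-k}$ can be bounded by $2^{-k\alpha(\tilde p,\tilde q)}\|f\|_{L^{\tilde p}(d\sigma)}^2$. When $(\tilde p,\tilde q)$ is strictly interior to the region, the geometric series in $k$ converges and one obtains the linear bound at that pair; the desired borderline pair $(p,q)$ is then recovered by real interpolation between the bilinear estimates at an open neighbourhood of nearby exponents (equivalently, a restricted weak-type argument on atomic inputs).

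The main obstacle is exactly this final summation. When $(p,q)$ is well inside the conjectured region the gain $2^{-k\alpha}$ makes the argument routine, but at the critical scaling line $\tfrac{1}{q}=\tfrac{d-1}{d+1}\tfrac{1}{p'}$ the exponent $\alpha$ degenerates and the geometric sum fails to close directly. This is precisely why Proposition \ref{bilinimplieslin} demands the bilinear hypothesis on a whole neighbourhood of $(p,q)$: the extra flexibility is what permits the interpolation or atomic-decomposition manoeuvre of \cite{TVV} to absorb the logarithmic loss at the borderline.
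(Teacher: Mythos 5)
Your proposal reconstructs the original Tao--Vargas--Vega argument from \cite{TVV}: Whitney decomposition of $S\times S$ off the diagonal into pairs of $2^{-k}$-caps at separation $\sim 2^{-k}$, parabolic rescaling of each pair to produce $O(1)$-transversal caps, Jacobian bookkeeping, almost-orthogonality to reassemble, and an interpolation/restricted-weak-type manoeuvre to rescue the geometric series in $k$ when it degenerates on the Knapp line. That scheme is sound, and your closing diagnosis of why the neighbourhood hypothesis is needed -- to finance the endpoint summation -- is exactly right.

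The paper, however, deliberately presents a different proof, and says so explicitly: ``the rather simple argument that we sketch here -- due to Bourgain and Guth \cite{BG} -- \ldots does not appear to easily capture the full strength of the proposition.'' It uses the broad--narrow scheme: decompose $S$ into caps $\{S_\alpha\}$ at a \emph{single} scale $1/K$, invoke the elementary pointwise bound of Proposition~\ref{bilBG} to split $|\widehat{fd\sigma}|^q$ into a ``broad'' sum over $1/K$-transversal pairs plus a ``narrow'' term $\sum_\alpha|\widehat{f_\alpha d\sigma}|^q$ carrying no power of $K$, estimate the narrow pieces by parabolic rescaling to produce a factor $\mathcal{C}K^{2d/q-(d-1)}$, and close by a bootstrap: take $K$ large enough to absorb the narrow contribution into the left-hand side, leaving the broad term for the bilinear hypothesis. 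So the two arguments share parabolic rescaling but little else: yours is a multi-scale Whitney sum, the paper's is a single-scale decomposition feeding a self-improving recursive inequality for the best constant $\mathcal{C}(R)$. Your route yields the full proposition, including the Knapp boundary $\tfrac{1}{q}=\tfrac{d-1}{d+1}\tfrac{1}{p'}$; the paper's version only treats $p=q>\tfrac{2d}{d-1}$, which suffices for the interior of the conjectured range. In exchange, the Bourgain--Guth method adapts directly to the $k$-linear setting of Section~\ref{fourfour}, where a Whitney pairing of two caps at a common scale has no analogue -- and this is precisely why the paper chooses it.
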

This bilinear approach to the linear restriction conjecture has been very successful. Until very recently the state-of-the-art on Conjecture \ref{LRC} relied upon this passage and Tao's bilinear inequality \eqref{KM}.

In order to indicate why Proposition \ref{bilinimplieslin} is true it is natural that we present an argument that may be adapted to a more general multilinear setting. While the rather simple argument that we sketch here -- due to Bourgain and Guth \cite{BG} -- has this advantage, it does not appear to easily capture the full strength of the proposition. Here we will indicate how the conjectured bilinear inequality \eqref{brc} may be used to obtain the conjectured linear inequality \eqref{lrc} whenever $p=q>\frac{2d}{d-1}$. This special case is readily seen to imply the linear restriction conjecture on the interior of the full conjectured range of Lebesgue exponents.

Let $\{S_\alpha\}$ be a partition of $S$ by patches of diameter approximately $1/K$ and write $$f=\sum_\alpha f_\alpha,\;\;\mbox{ where }\;\;f_\alpha=f\chi_{S_\alpha}.$$
By linearity
$$\widehat{fd\sigma}=\sum_\alpha\widehat{f_\alpha d\sigma}.$$ The key observation is the following elementary inequality; see \cite{BG}.
\begin{prop}\label{bilBG}
\begin{equation}\label{BGeasy}
|\widehat{fd\sigma}(\xi)|^q\lesssim K^{2(d-1)q}\sum_{S_{\alpha_1},S_{\alpha_2} }|\widehat{f_{\alpha_1}d\sigma}(\xi)\widehat{f_{\alpha_2}d\sigma}(\xi)|^{\frac{q}{2}}+\sum_{\alpha}|\widehat{f_{\alpha}d\sigma}(\xi)|^q,
\end{equation}
where the sum in $S_{\alpha_1},S_{\alpha_2}$ is restricted to $1/K$-transversal pairs $S_{\alpha_1}, S_{\alpha_2}$.
\end{prop}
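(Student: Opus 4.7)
The plan is to perform, pointwise in $\xi$, a dichotomy based on the index $\alpha_0=\alpha_0(\xi)$ that maximises $|\widehat{f_\alpha d\sigma}(\xi)|$; call this maximum $M(\xi)$. The key geometric observation, which exploits the fact that $S$ is a smooth submanifold partitioned into caps of diameter $\sim 1/K$, is that the ``non-transversal neighbourhood''
$$\mathcal{N}(\alpha_0):=\{\alpha: S_\alpha\text{ and }S_{\alpha_0}\text{ are not }1/K\text{-transversal}\}$$
has cardinality bounded by an absolute constant, uniformly in $K$. Indeed, the normals in a single cap $S_\alpha$ lie in an $O(1/K)$-neighbourhood of a fixed vector, so caps with normals within angle $1/K$ of the normal of $S_{\alpha_0}$ have centres within distance $O(1/K)$ of that of $S_{\alpha_0}$; such a ball contains only $O(1)$ caps of diameter $1/K$.

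Given this, I split
$$\widehat{fd\sigma}(\xi)=\underbrace{\sum_{\alpha\in\mathcal{N}(\alpha_0)}\widehat{f_\alpha d\sigma}(\xi)}_{=:A(\xi)}+\underbrace{\sum_{\alpha\notin\mathcal{N}(\alpha_0)}\widehat{f_\alpha d\sigma}(\xi)}_{=:B(\xi)}.$$
For $A$, the triangle inequality combined with $|\mathcal{N}(\alpha_0)|=O(1)$ gives $|A(\xi)|\lesssim M(\xi)$, hence $|A(\xi)|^q\lesssim M(\xi)^q\le\sum_\alpha|\widehat{f_\alpha d\sigma}(\xi)|^q$, which accounts for the second term on the right-hand side of the claim. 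For $B$, the maximality of $\alpha_0$ yields the trivial but crucial bound
$$|\widehat{f_\alpha d\sigma}(\xi)|\le|\widehat{f_{\alpha_0}d\sigma}(\xi)|^{1/2}|\widehat{f_\alpha d\sigma}(\xi)|^{1/2}=|\widehat{f_{\alpha_0}d\sigma}(\xi)\widehat{f_\alpha d\sigma}(\xi)|^{1/2}$$
for every $\alpha\notin\mathcal{N}(\alpha_0)$, and for each such $\alpha$ the pair $(\alpha_0,\alpha)$ is $1/K$-transversal by definition. Summing, applying the triangle inequality, and then the power-mean inequality to the sum of at most $O(K^{d-1})$ terms, I obtain
$$|B(\xi)|^q\lesssim K^{(d-1)(q-1)}\sum_{\alpha\notin\mathcal{N}(\alpha_0)}|\widehat{f_{\alpha_0}d\sigma}(\xi)\widehat{f_\alpha d\sigma}(\xi)|^{q/2},$$
and this last sum is dominated by the full transversal bilinear sum on the right-hand side of \eqref{BGeasy}. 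Since $K^{(d-1)(q-1)}\le K^{2(d-1)q}$, combining the bounds on $A$ and $B$ yields the proposition.

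The main (only conceptual) obstacle is the clustering observation that $|\mathcal{N}(\alpha_0)|=O(1)$; everything else is the triangle inequality, pointwise maximality and Jensen. One must be careful that the distinguished index $\alpha_0$ depends on $\xi$, so the transversal bilinear sum must be taken over all $1/K$-transversal pairs $(\alpha_1,\alpha_2)$ rather than a single fixed pair — but that is precisely how the right-hand side of \eqref{BGeasy} is formulated.
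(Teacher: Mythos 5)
Your proof is correct, and it fills in exactly the verification the paper delegates to the reader. The paper's hint, the abstract $\ell^1/\ell^q$ inequality for sequences, sums over \emph{all} distinct pairs $j\neq k$, whereas the proposition requires the bilinear sum be restricted to $1/K$-transversal pairs; the geometric clustering observation you supply --- that the non-transversal neighbourhood $\mathcal{N}(\alpha_0)$ has $O(1)$ elements, thanks to the curvature of the paraboloid making the Gauss map bi-Lipschitz at scale $1/K$ --- is precisely the extra ingredient needed to bridge that gap, and you apply it correctly. Your organisation around the $\xi$-dependent maximiser $\alpha_0$ (rather than first proving and then applying the abstract sequence inequality) is a modest streamlining of the same argument, and incidentally yields the sharper power $K^{(d-1)(q-1)}$ in place of $K^{2(d-1)q}$, which as the paper notes is immaterial for the bootstrap since only the absence of a power of $K$ on the $\ell^q$ term matters.
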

By $1/K$-transversal we mean that $|v_1\wedge v_2|\geq 1/K$ for all choices of unit normal vectors $v_1,v_2$ to $S_1,S_2$ respectively.
This proposition essentially amounts to an application of the elementary abstract inequality
$$\|a\|_{\ell^1(\mathbb{Z}_N)}^q\lesssim N^{2q}\sum_{j\not=k}|a_ja_k|^{\frac{q}{2}}+\|a\|_{\ell^q(\mathbb{Z}_N)}^q$$
for finite sequences of real numbers $a$. We leave the verification of this to the interested reader.

Assuming the truth of Proposition \eqref{bilBG}, and integrating in $\xi$, we obtain
\begin{equation}\label{BGint}\|\widehat{fd\sigma}\|_q^q\lesssim K^{2(d-1)q}\sum_{S_{\alpha_1},S_{\alpha_2} }\|\widehat{f_{\alpha_1}d\sigma}\widehat{f_{\alpha_2}d\sigma}\|_{q/2}^{\frac{q}{2}}+\sum_{\alpha}\|\widehat{f_{\alpha}d\sigma}\|_q^q,
\end{equation}
which, because of the terms $\|\widehat{f_{\alpha}d\sigma}\|_q^q$ appearing on the right-hand-side, strongly suggests the viability of a bootstrapping argument. To this end let $\mathcal{C}=\mathcal{C}(R)$ denote the smallest constant in the inequality $\|\widehat{fd\sigma}\|_{L^q(B(0,R))}\leq C\|f\|_q$ over all $R\gg 1$ and $f\in L^p(d\sigma)$.
The only role of the parameter $R$ here is to ensure that $\mathcal{C}$ is a-priori finite, and of course the task is to show that $\mathcal{C}<\infty$ uniformly in $R$.
Using a parabolic scaling\footnote{It is here where technical difficulties arise when $S$ is other than a piece of paraboloid. To overcome this, it is necessary to include uniformity ingredients in both the definition of $\mathcal{C}$ and Conjecture \ref{BRC}; see \cite{BCT} and \cite{BG} for details.}  and the definition of $\mathcal{C}$ allows us to deduce that
$$\|\widehat{f_\alpha d\sigma}\|_q\lesssim\mathcal{C}K^{2d/q-(d-1)}\|f_\alpha\|_q,$$ which represents a gain for large $K$ as the power $2d/q-(d-1)$ is negative. Using \eqref{BGint} along with the support disjointness property $\sum_\alpha\|f_\alpha\|_q^q=\|f\|_q^q$, we obtain
\begin{equation}\label{BGint2}\|\widehat{fd\sigma}\|_q^q\leq c K^{2(d-1)q}\sum_{S_{\alpha_1},S_{\alpha_2} }\|\widehat{f_{\alpha_1}d\sigma}\widehat{f_{\alpha_2}d\sigma}\|_{q/2}^{\frac{q}{2}}+c\mathcal{C}K^{2d/q-(d-1)}\|f\|_q^q
\end{equation}
for some constant $c$ independent of $K$. Now, taking $K$ sufficiently large so that $cK^{2d/q-(d-1)}\leq 1/2$ (say), we see that it suffices to show that
\begin{equation}\label{dagger}
K^{2(d-1)q}\sum_{S_{\alpha_1},S_{\alpha_2} }\|\widehat{f_{\alpha_1}d\sigma}\widehat{f_{\alpha_2}d\sigma}\|_{q/2}^{\frac{q}{2}}\leq A\|g\|_q^q
\end{equation}
for some constant $A=A(K)$. Indeed, if we have \eqref{dagger} then by the definition of $\mathcal{C}$ we have $\mathcal{C}\leq cA+\mathcal{C}/2$, from which we may deduce that $\mathcal{C}<\infty$ uniformly in $R$. However, \eqref{dagger} is a straightforward consequence of the conjectured bilinear inequality \eqref{brc}.

It is perhaps helpful to remark that the above argument would have been equally effective if the factor of $K^{2(d-1)}$ in \eqref{BGeasy} were replaced by any fixed power of $K$. As we have seen, the key feature of \eqref{BGeasy} is the absence of a power of $K$ in the second (``bootstrapping") term on the right-hand-side.

While the bilinear approach to the restriction conjecture has proved very powerful, it does have some drawbacks for $d\geq 3$. In particular,
\begin{itemize}
\item[(i)] it leaves some confusion over the relative roles of curvature and transversality, and
\item[(ii)] it provides very limited insight into the relationship between the restriction and Kakeya problems.
\end{itemize}
As we shall see in the next section, in a multilinear setting these matters are in some sense clarified.
\section{Multilinear transversality and the multilinear restriction theory}\label{MultSect}
\subsection{The multilinear restriction problem}
The notion of transversality discussed previously has a very natural extension to the context of several codimension-one submanifolds of $\mathbb{R}^d$ provided the dimension $d$ is large enough.

\begin{definition}[multilinear transversality]
Let $2\leq k\leq d$ and $c>0$. A $k$-tuple $S_1,\hdots,S_k$ of smooth codimension-one submanifolds of $\mathbb{R}^d$ is $c$-\emph{transversal} if $$|v_{1}\wedge\cdots\wedge v_{k}|\geq c$$ for all choices $v_{1},\hdots, v_{k}$ of unit normal vectors to $S_{1},\hdots, S_{k}$ respectively. We say that $S_1,\hdots, S_k$ are transversal if they are $c$-transversal for some $c>0$.
\end{definition}
In the above definition the $k$-dimensional volume form $|v_{1}\wedge\cdots\wedge v_{k}|$ is simply the $k$-dimensional volume of the parallelepiped generated by $v_1,\hdots, v_k$.

\begin{conjecture}[$k$-linear Restriction]\label{MLRC}
Let $k\geq 2$ and suppose $S_1,\hdots,S_k$ are transversal with everywhere positive principal curvatures. If $\tfrac{1}{q}<\tfrac{d-1}{2d}$,
$\tfrac{1}{q}\leq\tfrac{d+k-2}{d+k}\tfrac{1}{p'}$ and
$\tfrac{1}{q}\leq\tfrac{d-k}{d+k}\tfrac{1}{p'}+\tfrac{k-1}{k+d}$,
then
\begin{equation}\label{mri}
\Bigl\|\prod_{j=1}^{k}\widehat{f_jd\sigma_j}\Bigr\|_{L^{q/k}(\mathbb{R}^d)}\lesssim\prod_{j=1}^{k} \|f_{j}\|_{L^{p}(d\sigma_{j})}
\end{equation}
for all $f_{1}\in L^p(d\sigma_1),\hdots,f_{k}\in L^{p}(d\sigma_k)$.
\end{conjecture}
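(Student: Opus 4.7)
The plan is to follow the induction-on-scales scheme sketched in Section~\ref{twothree}, with the bilinear restriction estimate replaced by a $k$-linear Kakeya-type ingredient. First, by H\"older's inequality and interpolation with the trivial bound $\|\prod_j \widehat{f_jd\sigma_j}\|_\infty\leq\prod_j\|f_j\|_{L^1(d\sigma_j)}$, it suffices to establish the scale-invariant $L^2$-based endpoint $p=2$, $q=\tfrac{2(d+k)}{d+k-2}$, at which both slanted conditions on $1/q$ in Conjecture~\ref{MLRC} are simultaneously saturated. A standard $\varepsilon$-removal manoeuvre then reduces matters further to proving the estimate with an $R^\varepsilon$-loss whenever the output is localised to a ball $B(0,R)\subset\mathbb{R}^d$.

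Second, I would define $\mathcal{C}(R)$ as the best constant in the $R$-localised inequality and seek a recursion of the shape $\mathcal{C}(R)\leq C_\varepsilon R^\varepsilon+\tfrac12\mathcal{C}(R/2)$ that closes by iteration, in direct analogy with the bootstrapping argument of Section~\ref{twothree}. The engine of the recursion is a wave-packet decomposition at scale $R^{1/2}$: each $f_j$ is split as $f_j=\sum_\rho f_{j,\rho}$, where $\rho$ ranges over an $R^{-1/2}$-cap decomposition of $S_j$, and each $\widehat{f_{j,\rho}d\sigma_j}$ is, up to rapidly decaying tails, a modulated bump function adapted to a dual tube of dimensions $R^{1/2}\times\cdots\times R^{1/2}\times R$ normal to $S_j$ at the centre of $\rho$. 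Crucially, the $k$-linear transversality hypothesis on $S_1,\ldots,S_k$ transfers directly to a $k$-linear transversality of the resulting tube families $\mathbb{T}_1,\ldots,\mathbb{T}_k$.

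Third, via $TT^*$, Plancherel and an application of Cauchy--Schwarz \emph{within} each family to concentrate the $L^2$-mass appropriately, the $L^2$-endpoint estimate collapses onto a $k$-linear Kakeya inequality for $c$-transversal $\delta$-tube families. This Kakeya inequality, which in the Loomis--Whitney case $k=d$ reads
\begin{equation*}
\Bigl\|\prod_{j=1}^{d}\sum_{T\in\mathbb{T}_j}\chi_T\Bigr\|_{L^{1/(d-1)}(\mathbb{R}^d)}\lesssim\prod_{j=1}^{d}\bigl(\delta^{d-1}\#\mathbb{T}_j\bigr),
\end{equation*}
is the main geometric obstacle, and I would attack it by the heat-flow monotonicity method. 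Setting $u_j=\sum_{T\in\mathbb{T}_j}\chi_T$ and letting $g_{j,t}=u_j*\varphi_t$ be its Gaussian regularisation at height $t>0$, study
\begin{equation*}
Q(t)=\int_{\mathbb{R}^d}\prod_{j=1}^{k}g_{j,t}(\xi)^{\alpha_j}\,d\xi
\end{equation*}
for suitably chosen exponents $\alpha_j>0$. Differentiating $Q$ in $t$ and integrating by parts should produce an integrand whose pointwise nonnegativity follows from a Cauchy--Schwarz-type estimate exploiting the transversality $|v_1\wedge\cdots\wedge v_k|\geq c$; monotonicity of $Q$ together with the limits $t\to\infty$ and $t\to 0^+$ then deliver the claimed Kakeya bound.

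Finally, inserting the Kakeya estimate back into the wave-packet decomposition closes the recursion and yields the $L^2$ endpoint of Conjecture~\ref{MLRC}. The hardest step, in my view, is the Kakeya input: identifying the correct exponents $\alpha_j$ (which must reflect the $k$-fold transversal geometry rather than the full dimension $d$ when $k<d$) and verifying the underlying pointwise inequality is the geometric heart of the scheme. Beyond this, upgrading from the $L^2$ endpoint to the full conjectured range of $(p,q)$ falls outside the present framework and, much as in the linear theory, would demand genuinely new ideas.
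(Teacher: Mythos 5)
You are attempting to prove Conjecture~\ref{MLRC}, which the paper presents as an \emph{open conjecture}, not a theorem; there is therefore no proof in the paper against which your attempt can be compared. For $k<d$ the problem is open in full, and even for $k=d$ the best known result is Theorem~\ref{bcttheoremrevisted}, which carries a $(\log R)^\kappa$ loss. What your outline in fact sketches is the induction-on-scales scheme behind these \emph{partial} results (the content of \cite{BCT}, \cite{G} and Section~\ref{fourthree}), but several of its steps either fail outright for $k<d$ or terminate in other open problems, so the argument cannot be completed.

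Concretely: (i) The interpolation reduction to the single $L^2$ endpoint $(p,q)=\bigl(2,\tfrac{2(d+k)}{d+k-2}\bigr)$ is incorrect for $k<d$. Interpolating that endpoint with the trivial $L^1\to L^\infty$ bound and then applying H\"older in $p$ yields only the region $\tfrac{1}{q}\leq\tfrac{d+k-2}{2(d+k)}$, which is strictly smaller than the conjectured $\tfrac{1}{q}<\tfrac{d-1}{2d}$ when $k<d$; the reduction is an equivalence only when $k=d$, where the two slanted conditions coincide (as the paper observes for Conjecture~\ref{dLRC}). (ii) The curvature hypothesis ``everywhere positive principal curvatures'' is essential for $k<d$, yet plays no role in your argument. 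In the Kakeya formulation it is the angular $\delta$-separation condition within each family (Conjecture~\ref{MLKC}), which the paper stresses can be dropped \emph{only} when $k=d$; your heat-flow sketch treats the problem as a Loomis--Whitney-type bound, which it is not for $k<d$, and the uncertainty you flag about ``the correct exponents $\alpha_j$'' is precisely the open problem, not a detail to be filled in. (iii) Even for $k=d$, the heat-flow method gives the $d$-linear Kakeya bound only for $q>\tfrac{d}{d-1}$; the endpoint $q=\tfrac{d}{d-1}$ required Guth's algebraic argument \cite{G}, and feeding that into the recursion (Proposition~\ref{KRimpliesR}) still only delivers the $(\log R)^\kappa$-loss restriction estimate, not the conjecture. (iv) There is no ``standard $\varepsilon$-removal manoeuvre'' available at this level of generality; removing even the logarithmic loss in Theorem~\ref{bcttheoremrevisted} is itself open.
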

As may be expected, the case $k=d$ is rather special.
At this level of multilinearity Conjecture \ref{MLRC} changes nature quite fundamentally, exhibiting a number of important features which are not seen at lower levels of multilinearity. Indeed when $k=d$, standard examples indicate that Conjecture \ref{MLRC} may be strengthened as follows.
\begin{conjecture}[$d$-linear Restriction Conjecture]\label{dLRC}
If $S_1,\hdots,S_d$ are transversal,
$\tfrac{1}{q}\leq\tfrac{d-1}{2d}$ \emph{and} $\tfrac{1}{q}
\leq\tfrac{d-1}{d}\tfrac{1}{p'}$, then
\begin{equation*}
\|\prod_{j=1}^d\widehat{f_jd\sigma_j}\|_{L^{q/d}(\mathbb{R}^d)}\lesssim\prod_{j=1}^d\|f_j\|_{L^{p}(d\sigma_j)}.
\end{equation*}
\end{conjecture}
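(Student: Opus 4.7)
My plan is to prove an $\varepsilon$-loss version of the estimate at the critical endpoint $(p,q)=(2,\tfrac{2d}{d-1})$, which is precisely the point where the two conditions $\tfrac{1}{q}\leq\tfrac{d-1}{2d}$ and $\tfrac{1}{q}\leq\tfrac{d-1}{d}\tfrac{1}{p'}$ meet. Concretely, for every $\varepsilon>0$ and every $R\gg 1$, I aim to establish
\[
\Bigl\|\prod_{j=1}^d \widehat{f_j d\sigma_j}\Bigr\|_{L^{2/(d-1)}(B(0,R))}\leq C_\varepsilon R^\varepsilon\prod_{j=1}^d\|f_j\|_{L^2(d\sigma_j)}.
\]
The full conjectured range on the open part follows by Hölder interpolation against the trivial bound $\|\widehat{f_jd\sigma_j}\|_\infty\leq\|f_j\|_1$; the $R^\varepsilon$ loss can then be absorbed by standard $\varepsilon$-removal. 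The engine will be Bourgain's induction-on-scales, with a $d$-linear Kakeya inequality playing the role that linear Kakeya plays in the linear theory -- precisely the perspective advertised at the end of Section \ref{twothree}.

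\textbf{Wave packet reduction.} Fix a ball $B=B(0,R)$ and decompose each $S_j$ into $R^{-1/2}$-caps $\tau$. For each cap I split $f_j=\sum_\tau f_{j,\tau}$ and perform a standard wave packet decomposition: on $B$, the extension $\widehat{f_{j,\tau}d\sigma_j}$ is essentially a sum of functions concentrated on $R^{1/2}\times\cdots\times R^{1/2}\times R$ tubes $T$ oriented along the unit normal to $S_j$ over $\tau$. The transversality hypothesis ensures that tubes arising from distinct $S_j$ meet at angles bounded below. Using local constancy of each $|\widehat{f_{j,\tau}d\sigma_j}|^2$ on $R^{1/2}$-balls together with $L^2$-orthogonality between distinct caps, I reduce the pointwise product on $B$ to a geometric expression of the form
\[
\prod_{j=1}^d\Bigl(\sum_{T\in\mathbb{T}_j}c_{j,T}\chi_T\Bigr),
\]
where $\mathbb{T}_j$ is a transversal family of $R^{1/2}$-tubes and $\sum_T c_{j,T}^2\lesssim\|f_j\|_2^2$.

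\textbf{The $d$-linear Kakeya core.} The heart of the argument is then the $d$-linear Loomis--Whitney-type (Kakeya) inequality
\[
\Bigl\|\prod_{j=1}^d\sum_{T\in\mathbb{T}_j}\chi_T\Bigr\|_{L^{1/(d-1)}(\mathbb{R}^d)}\lesssim\delta^d\prod_{j=1}^d(\#\mathbb{T}_j)^{1/(d-1)}
\]
for $\delta$-tube families whose direction sets are uniformly transversal. Specialising to tubes parallel to the coordinate axes one recovers the classical Loomis--Whitney inequality, which suggests that a heat-flow monotonicity argument in the spirit of the Brascamp--Lieb approach to Loomis--Whitney should handle the genuinely transversal case as well: introduce a simultaneous heat evolution $u_j(t,\cdot)$ of the indicator sums along each $\mathbb{T}_j$, and show that $\int\prod_j u_j(t,\cdot)^{1/(d-1)}$ is monotone in $t$, using transversality to dominate the cross-terms in the derivative computation by the diagonal ones. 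This is the step I expect to be the main obstacle: getting the Kakeya inequality with no loss in $R$ (or even with an acceptable $R^\varepsilon$ loss) is genuinely delicate, and it is precisely where the move from transversality to a quantitative geometric inequality occurs.

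\textbf{Closing the induction.} Feeding the multilinear Kakeya bound back through the wave packet reduction gives a recursion of the shape
\[
Q(R)\leq C_\varepsilon R^\varepsilon + K^{-\eta}Q(R),
\]
or, upon a parabolic rescaling at scale $K$, $Q(R)\lesssim A(K,\varepsilon)+K^{-\eta}Q(R/K)$, where $Q(R)$ denotes the best constant in the estimate on $B(0,R)$ and $\eta>0$ reflects the scaling gain in the critical exponent. Iterating the recursion finitely many times (exactly as in the bilinear argument reviewed in Section \ref{twothree}) closes the bootstrap and yields the $R^\varepsilon$ bound. Finally, interpolation with the trivial $L^\infty$ estimate and standard $\varepsilon$-removal machinery deliver the full conjectured range of exponents claimed in Conjecture \ref{dLRC}; note in particular that no curvature hypothesis is needed at any step, which explains why the $d$-linear conjecture is formulated purely under transversality.
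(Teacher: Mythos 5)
The statement you are trying to prove is a \emph{conjecture}, and it is still open; the paper proves only two weaker results, Theorem \ref{bcttheorem} (an $R^\varepsilon$-loss local estimate) and Theorem \ref{bcttheoremrevisted} (a $(\log R)^\kappa$-loss local estimate). Your proposal is, in outline, the Bennett--Carbery--Tao argument that establishes Theorem \ref{bcttheorem}: wave packet decomposition at scale $R^{1/2}$, reduction to a transversal $d$-linear Kakeya inequality, and an induction-on-scales. That part is the right machinery for the $\varepsilon$-loss version. The gap is in your final step, where you assert that ``interpolation with the trivial $L^\infty$ estimate and standard $\varepsilon$-removal machinery deliver the full conjectured range.'' Epsilon-removal converts local $R^\varepsilon$ estimates on balls into \emph{global} estimates for exponents \emph{strictly} inside the conjectured range; it does not recover the endpoint $(p,q)=(2,\tfrac{2d}{d-1})$, which is included in Conjecture \ref{dLRC} because both inequalities there are non-strict. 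That endpoint $L^2$ estimate $\|\prod_j\widehat{f_jd\sigma_j}\|_{L^{2/(d-1)}(\mathbb{R}^d)}\lesssim\prod_j\|f_j\|_2$ is precisely what remains unknown, and your argument does not produce it.

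A second, related inaccuracy is the shape of your recursion. You write $Q(R)\leq C_\varepsilon R^\varepsilon + K^{-\eta}Q(R)$, an absorbing recursion in the style of the Bourgain--Guth bilinear-to-linear bootstrap of Section \ref{twothree}. But the restriction-to-Kakeya bootstrap is \emph{multiplicative} (Proposition \ref{KRimpliesR}): $\widetilde{\mathcal{C}}_{\rest}(R)\leq c\,\widetilde{\mathcal{C}}_{\rest}(R^{1/2})\,\mathcal{C}_{\kak}(R^{-1/2})$. Iterating this $O(\log\log R)$ times accumulates a product of the constants $c\,\mathcal{C}_\kak$, which is why even with Guth's \emph{endpoint} $d$-linear Kakeya bound $\mathcal{C}_\kak(\delta)\leq C$ one only obtains $\widetilde{\mathcal{C}}_\rest(R)=O((\log R)^\kappa)$, not a uniform bound. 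Your heat-flow treatment of the Kakeya inequality has the same limitation: the monotonicity argument of \cite{BCT} works for $q>\tfrac{d}{d-1}$ but loses at the endpoint, which Guth resolved by an entirely different (algebraic-topological) method, and even feeding that endpoint bound into the recursion does not close the gap to Conjecture \ref{dLRC}. In short: your argument, properly executed, gives Theorem \ref{bcttheorem}/\ref{bcttheoremrevisted}, which is the state of the art, but it does not (and no known argument does) prove the conjecture as stated.
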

Of course the most striking difference is that the curvature hypothesis has been completely removed. Furthermore, by multilinear interpolation it is easy to see that Conjecture \ref{dLRC} is equivalent to an endpoint inequality
\begin{equation}\label{endp}
\|\prod_{j=1}^d\widehat{f_jd\sigma_j}\|_{L^{\frac{2}{d-1}}(\mathbb{R}^d)}\lesssim\prod_{j=1}^d\|f_j\|_{L^{2}(d\sigma_j)},
\end{equation}
which was previously excluded. We remark in passing that \eqref{endp}, being on $L^2$, has a rather elegant interpretation as a multilinear example of the Strichartz estimates discussed in Section \ref{twoone}. In this context it is a little more convenient to first rephrase \eqref{endp} in terms of parametrised surfaces. If $\Sigma_1:U_1\rightarrow\mathbb{R}^d, \cdots, \Sigma_d:U_d\rightarrow\mathbb{R}^d$ are smooth parametrisations of $S_1,\hdots, S_d$ with associated extension operators $\mathcal{E}_1,\hdots,\mathcal{E}_d$, respectively, then the conjectured endpoint inequality
\eqref{endp} may be restated as
\begin{equation}\label{endpparr}
\|\prod_{j=1}^d\mathcal{E}_jg_j\|_{L^{\frac{2}{d-1}}(\mathbb{R}^d)}\lesssim\prod_{j=1}^d\|g_j\|_{L^2(U_j)}.
\end{equation}
Now, for example, suppose that $u_1,\hdots,u_d:\mathbb{R}\times\mathbb{R}^d\rightarrow\mathbb{C}$ are solutions to the Schr\"odinger equation $i\partial_tu+\Delta u=0$. As is straightforward to verify, the transversality hypothesis requires that the supports of the spatial Fourier transforms of the initial data $\supp(\widehat{u}_1(0,\cdot)),\hdots, \supp(\widehat{u}_d(0,\cdot))$ are compact and meet no affine hyperplane in $\mathbb{R}^{d-1}$. An application of Plancherel's theorem reveals that under this condition \eqref{endpparr} becomes
$$
\|\prod_{j=1}^k u_j\|_{L^{\frac{2}{d-1}}_{t,x}(\mathbb{R}\times\mathbb{R}^{d-1})}\lesssim \prod_{j=1}^d\|u_j(0,\cdot)\|_{L^{2}(\mathbb{R}^{d-1})}.
$$

Unlike at lower levels of multilinearity, the $d$-linear restriction conjecture is almost resolved.
\begin{theorem}[B--Carbery--Tao \cite{BCT}]\label{bcttheorem}
If $S_1,\hdots,S_d$ are transversal then given any $\varepsilon>0$ there exists a constant $C_\varepsilon<\infty$ such that
$$
\|\prod_{j=1}^d\widehat{f_jd\sigma_j}\|_{L^{\frac{2}{d-1}}(B(0,R))}\leq C_\varepsilon R^\varepsilon\prod_{j=1}^d\|f_j\|_{L^{2}(d\sigma_j)}
$$
for all $R$.
\end{theorem}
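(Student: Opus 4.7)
The plan is an induction-on-scales bootstrap in the style of Bourgain, but adapted to the $d$-linear setting. Let $A(R)$ denote the smallest constant for which $\|\prod_{j=1}^d \mathcal{E}_j g_j\|_{L^{2/(d-1)}(B(0,R))} \leq A(R) \prod_{j=1}^d \|g_j\|_{L^2(U_j)}$ holds uniformly over all smooth parametrisations obeying a fixed transversality and parameter-uniformity condition; the goal is to prove $A(R) \leq C_\varepsilon R^\varepsilon$. The central geometric input is a $d$-linear Kakeya estimate at scale $\delta$: for any $c$-transversal families $\mathbb{T}_1,\ldots,\mathbb{T}_d$ of $\delta$-tubes with the $j$-th family oriented near the normals of $S_j$,
\[
\int_{\mathbb{R}^d}\prod_{j=1}^d\Bigl(\sum_{T\in\mathbb{T}_j}\chi_T\Bigr)^{1/(d-1)} \;\leq\; C_\varepsilon\, \delta^{-\varepsilon}\,\delta^d\prod_{j=1}^d(\#\mathbb{T}_j)^{1/(d-1)}.
\]
No curvature hypothesis on the surfaces enters here; the exponent $1/(d-1)$ is dictated purely by scaling on the $d$-transversal configuration, which is exactly what makes a \emph{transversality-only} statement conceivable.

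I would first establish the Kakeya inequality above by its own induction on scales, transferring the estimate from an intermediate scale $\rho^{-1}$ (with $\delta \ll \rho \ll 1$) to the target scale $\delta^{-1}$ via a covering-and-rescaling argument: partition the integration region into balls of radius $\rho^{-1}$, regroup the $\delta$-tubes within each ball so that, after rescaling, they become a transversal family of $(\delta/\rho)$-tubes in a unit ball, and apply the inductive hypothesis. The exponents align so that each step incurs only a small multiplicative loss, and compounding over $O(\log(1/\delta))$ scales produces the $\delta^{-\varepsilon}$ factor.

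Next I would transfer this Kakeya bound to the restriction side via a wave-packet decomposition at scale $R^{-1/2}$. Partition each $U_j$ into caps $\tau$ of diameter $R^{-1/2}$ and decompose $g_j = \sum_\tau g_{j,\tau}$; each $\mathcal{E}_j g_{j,\tau}$ is essentially concentrated, up to Schwartz tails, in a tube $T_\tau$ of width $R^{1/2}$ and length $R$ pointing along the normal to $S_j$ at the centre of $\tau$. An $L^2$-orthogonality / stationary-phase computation on a physical-space grid at scale $R^{1/2}$ produces a pointwise bound of the form $|\mathcal{E}_j g_j|^2 \lesssim \sum_\tau |c_\tau|^2 \chi_{T_\tau}$ with $|c_\tau|^2 \sim R^{-(d-1)/2}\|g_{j,\tau}\|_2^2$. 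Inserting this into the left-hand side of the theorem and invoking the Kakeya inequality at $\delta = R^{-1/2}$ yields exactly the desired $L^2$ right-hand side with an $R^\varepsilon$ loss.

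To prevent errors compounding as the scheme is iterated, I would embed the passage above in a Bourgain--Guth-style outer bootstrap analogous to the one behind \eqref{BGint2}: decompose each surface into $1/K$-caps (with $K$ a small power of $R$), apply the wave-packet-to-Kakeya argument only to the genuinely $c$-transversal cross-terms, and recycle the diagonal contributions via parabolic rescaling, which introduces a factor of $A(R/K^2)$ on the right-hand side. Iterating over $O(\log R)$ scales then collapses the inequality to $A(R) \leq C_\varepsilon R^\varepsilon$. The main obstacle throughout is the multilinear Kakeya inequality itself: in $d=2$ one has C\'ordoba's elegant $L^2$-argument, but for $d \geq 3$ a genuinely multilinear geometric mechanism is needed, and the delicate point is engineering the inductive step so that the critical exponent $1/(d-1)$ is preserved under only the transversality (rather than curvature) hypothesis --- it is precisely this first-order geometric input, rather than any stationary-phase decay, that is doing the work at the level of the endpoint $L^{2/(d-1)}$.
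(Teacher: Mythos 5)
The high-level strategy you describe --- reduce to a $d$-linear Kakeya inequality, transfer back to restriction via a wave-packet decomposition at scale $R^{-1/2}$, and iterate --- does match the structure of the paper. However, there are two concrete gaps which between them remove the substance of the proof.

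First, the $d$-linear Kakeya inequality is not established by the covering-and-rescaling induction you describe. That argument only proves the self-similarity inequality $\mathcal{C}_{\kak}(\delta)\le c\,\mathcal{C}_{\kak}(\delta/\delta')\,\mathcal{C}_{\kak}(\delta')$ (Proposition~\ref{kakkakkak} in the paper), in which $c$ is a fixed constant strictly greater than $1$. Iterating such a recursion from any base scale yields only a bound of the form $\delta^{-C}$ for a fixed $C>0$, not $\delta^{-\varepsilon}$; your claim that ``the exponents align so that each step incurs only a small multiplicative loss'' has no justification and is false as stated. The ``genuinely multilinear geometric mechanism'' you flag at the end as the main obstacle is in fact the whole content of the theorem: in \cite{BCT} it is a heat-flow monotonicity argument, with the self-similarity recursion serving only as motivation, and the endpoint is Guth's polynomial method. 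Your proposal leaves this central input entirely unaddressed.

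Second, the one-step transfer in your wave-packet argument does not close. The pointwise bound $|\mathcal{E}_j g_j|^2\lesssim\sum_\tau|c_\tau|^2\chi_{T_\tau}$ cannot hold in general: wave packets passing through a common $R^{1/2}$-ball can interfere constructively, making $|\mathcal{E}_j g_j|$ comparable there to $\sum_\tau|c_\tau|$ rather than to $\bigl(\sum_\tau|c_\tau|^2\bigr)^{1/2}$. The correct substitute is $L^2$-orthogonality on $R^{1/2}$-balls, and converting that into a bound on the $L^{2/(d-1)}$ norm requires invoking the $d$-linear restriction constant at the smaller scale $R^{1/2}$. This is exactly why the paper proves a recursive inequality, $\widetilde{\mathcal{C}}_{\rest}(R)\le c\,\widetilde{\mathcal{C}}_{\rest}(R^{1/2})\,\mathcal{C}_{\kak}(R^{-1/2})$ (Proposition~\ref{KRimpliesR}), rather than deducing restriction from Kakeya in a single pass. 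Finally, the Bourgain--Guth-style ``outer bootstrap'' you append in the last step is aimed at the linear problem, where one decomposes a single surface into $1/K$-caps to locate transversal tuples; in the $d$-linear setting the surfaces $S_1,\hdots,S_d$ are already transversal, so no such cap decomposition is needed, and the correct iteration is simply Proposition~\ref{KRimpliesR} applied $O(\log\log R)$ times, which together with the Kakeya bound away from the endpoint produces the $R^\varepsilon$ loss.
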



As we shall see in Section \ref{fourthree}, the recent work of Guth \cite{G}, combined with a slight reworking of certain elements of \cite{BCT}, leads to the following modest improvement on the above result:
\begin{theorem}\label{bcttheoremrevisted}
If $S_1,\hdots,S_d$ are transversal then there exist constants $C$ and $\kappa$ such that
$$\|\prod_{j=1}^d\widehat{f_jd\sigma_j}\|_{L^{\frac{2}{d-1}}(B(0,R))}\leq C(\log R)^\kappa \prod_{j=1}^d\|f_j\|_{L^2(d\sigma_j)}$$
for all $R>0$.
\end{theorem}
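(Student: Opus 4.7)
The plan is to revisit the induction-on-scales proof of Theorem \ref{bcttheorem} from \cite{BCT} and to insert Guth's sharp multilinear Kakeya inequality from \cite{G} in place of the $R^\varepsilon$-lossy Kakeya bound used there. Since Guth's constant is absolute, the $R^\varepsilon$ factor appearing in Theorem \ref{bcttheorem} collapses to a loss coming only from the number of scales traversed by the induction, which is $O(\log\log R)$; this compounds to a $(\log R)^\kappa$ loss.

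More concretely, let $C_*(R)$ denote the best constant (uniform over transversal configurations with a fixed transversality constant) for which
$$\Bigl\|\prod_{j=1}^d \widehat{f_j d\sigma_j}\Bigr\|_{L^{2/(d-1)}(B(0,R))} \le C_*(R)\prod_{j=1}^d \|f_j\|_{L^2(d\sigma_j)}$$
holds. A wave-packet decomposition of each $f_j$ at scale $\delta=R^{-1/2}$ reduces the left-hand side to an expression built from characteristic functions of the associated $\delta$-tubes, to which Guth's endpoint multilinear Kakeya estimate applies with a dimensional constant. Combining this with a parabolic rescaling that turns the problem at scale $R$ into a problem at scale $R^{1/2}$ yields a recursion of the schematic form
$$C_*(R) \le A\cdot C_*(R^{1/2}) + B,$$
where $A$ and $B$ depend only on $d$ and the transversality constant. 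Iterating this recursion $O(\log\log R)$ times to descend to scale $O(1)$ gives $C_*(R)\le (\log R)^\kappa$ with $\kappa = O(\log A)$.

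The principal technical obstacle is to arrive at a recursion of exactly the form above, in which neither $A$ nor $B$ carries a hidden power of $R$. In \cite{BCT} the wave-packet analysis is intertwined with auxiliary devices (Whitney-type pigeonholing between pairs of caps, spatial localisation to $R^{1/2}$-balls, and $L^2$-orthogonality arguments) which in their original form each allow themselves a harmless $R^\varepsilon$; each such step has to be re-examined and replaced by a quantitatively sharper variant so as to contribute only a bounded factor per scale. In particular, any dyadic pigeonholing over frequency ranges must be either eliminated or absorbed into the overall $(\log R)^\kappa$ factor. Once this bookkeeping is carried out and Guth's sharp Kakeya bound is plugged in, the number of induction steps alone determines $\kappa$, and the theorem follows.
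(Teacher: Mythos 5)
Your proposal is essentially the approach the paper takes: the paper proves a recursive inequality $\widetilde{\mathcal{C}}_{\rest}(R)\leq c\,\widetilde{\mathcal{C}}_{\rest}(R^{1/2})\,\mathcal{C}_{\kak}(R^{-1/2})$ (Proposition \ref{KRimpliesR}), substitutes Guth's uniform bound $\mathcal{C}_{\kak}(\delta)\leq C$, and iterates $O(\log\log R)$ times to get $(cC)^{O(\log\log R)}=O(\log R)^\kappa$, just as you describe. One remark: your worry that several steps of the original \cite{BCT} argument each "allow themselves a harmless $R^\varepsilon$" and must be re-examined somewhat overstates the difficulty — the localisation and orthogonality parts of the BCT wave-packet argument already produce a per-step constant depending only on $d$ and the transversality, and the sole source of $R^\varepsilon$ in Theorem \ref{bcttheorem} is the Kakeya bound away from the endpoint; once Guth's endpoint estimate is in hand the recursion is already of the clean multiplicative form $\widetilde{\mathcal{C}}_{\rest}(R)\leq A\,\widetilde{\mathcal{C}}_{\rest}(R^{1/2})$ with no additive $B$ needed.
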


Another special feature of the $d$-linear restriction problem is its relation with certain multilinear geometric inequalities. Exploring this turns out to be particularly revealing, and is a natural precursor to any discussion of multilinear Kakeya-type problems.
\subsection{Relation with multilinear geometric inequalities}\label{twotwo} 
Here it will be a little more convenient to work with parametrised extension operators \eqref{extn}, where the conjectured endpoint $d$-linear restriction inequality becomes \eqref{endpparr}.
Since there are no curvature hypotheses in the $d$-linear restriction conjecture it is natural to look at the conjectured endpoint $d$-linear restriction inequality in the situation where $S_j$ is the $j$th coordinate subspace $\{x=(x_1,\hdots,x_d):x_j=0\}$.
In this case
$\ex_jg_j=\widehat{g}_j\circ\pi_j$ where $\widehat{\;}$ denotes the $(d-1)$-dimensional Fourier transform and $\pi_j:\mathbb{R}^d\rightarrow\mathbb{R}^{d-1}$ is given by $$\pi_j(\xi)
=(\xi_1,\hdots,\xi_{j-1},\xi_{j+1},\hdots,\xi_d).$$
Thus the conjectured endpoint inequality \eqref{endpparr} becomes
$$\|\widehat{g}_1\circ\pi_1\cdots\widehat{g}_d\circ\pi_d\|_{L^{\frac{2}{d-1}}(\mathbb{R}^d)}\lesssim\|g_1\|_{L^{2}(S_1)}\cdots\|g_d\|_{L^{2}(S_d)}.$$
This, by Plancherel's theorem, reduces to
$$\|g_1\circ\pi_1\cdots g_d\circ\pi_d\|_{L^{\frac{2}{d-1}}(\mathbb{R}^d)}\lesssim\|g_1\|_{L^{2}(S_1)}\cdots\|g_d\|_{L^{2}(S_d)},$$
which on setting $f_j=|g_j|^2$ is equivalent to the (positive) inequality
$$
\int_{\mathbb{R}^d}(f_1\circ\pi_1)^{\frac{1}{d-1}}\cdots (f_d\circ\pi_d)^{\frac{1}{d-1}}\lesssim\Bigl(\int_{\mathbb{R}^{d-1}}f_1\Bigr)^{\frac{1}{d-1}}\cdots\Bigl(\int_{\mathbb{R}^{d-1}}f_d\Bigr)^{\frac{1}{d-1}}.$$
This is the \emph{Loomis--Whitney inequality} with a suboptimal constant.
\begin{theorem}[Loomis--Whitney 1948]
$$
\int_{\mathbb{R}^d}(f_1\circ\pi_1)^{\frac{1}{d-1}}\cdots (f_d\circ\pi_d)^{\frac{1}{d-1}}\leq\Bigl(\int_{\mathbb{R}^{d-1}}f_1\Bigr)^{\frac{1}{d-1}}\cdots\Bigl(\int_{\mathbb{R}^{d-1}}f_d\Bigr)^{\frac{1}{d-1}}$$
for all nonnegative integrable functions $f_1,\hdots,f_d$ on $\mathbb{R}^{d-1}$.
\end{theorem}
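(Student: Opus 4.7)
The plan is to prove the Loomis--Whitney inequality by induction on the dimension $d$, using Hölder's inequality to peel off one variable at a time. The base case $d=2$ is immediate from Fubini's theorem, since the integrand decouples as $f_1(x_2)f_2(x_1)$ and the inequality becomes an equality.

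For the inductive step, I would write a point of $\mathbb{R}^d$ as $(y,t)\in\mathbb{R}^{d-1}\times\mathbb{R}$ and observe that for $1\le j\le d-1$ the projection $\pi_j$ factors as $\pi_j(y,t)=(\tilde\pi_j(y),t)$, where $\tilde\pi_j:\mathbb{R}^{d-1}\to\mathbb{R}^{d-2}$ forgets the $j$th coordinate, while $\pi_d(y,t)=y$. Writing $G_j^t(w):=f_j(w,t)$ and integrating first in $y$ for each fixed $t$, the key move is an application of Hölder's inequality in $y$ with exponent $d-1$ on the factor $f_d(y)^{1/(d-1)}$ and the conjugate exponent $(d-1)/(d-2)$ on the remaining product, giving
\begin{equation*}
\int_{\mathbb{R}^{d-1}} f_d(y)^{1/(d-1)}\prod_{j=1}^{d-1} G_j^t(\tilde\pi_j(y))^{1/(d-1)}\,dy \le \Bigl(\int f_d\Bigr)^{1/(d-1)}\Bigl(\int_{\mathbb{R}^{d-1}} \prod_{j=1}^{d-1}G_j^t(\tilde\pi_j(y))^{1/(d-2)}\,dy\Bigr)^{(d-2)/(d-1)}.
\end{equation*}
These exponents are tuned so that the residual product carries the exponent $1/(d-2)$ on each factor, putting it in exactly the form to which the $(d-1)$-dimensional Loomis--Whitney inequality applies, for the functions $G_1^t,\dots,G_{d-1}^t$ on $\mathbb{R}^{d-2}$. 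The inductive hypothesis then bounds this residual integral by $\prod_{j=1}^{d-1}\bigl(\int G_j^t\bigr)^{1/(d-2)}$.

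To finish, I would integrate the resulting bound in $t$ and apply a single-variable Hölder inequality with equal exponents $d-1$ to the marginals $h_j(t):=\int_{\mathbb{R}^{d-2}} G_j^t(w)\,dw$. Since $\int_\mathbb{R} h_j(t)\,dt=\int_{\mathbb{R}^{d-1}} f_j$, this converts
\begin{equation*}
\int_\mathbb{R}\prod_{j=1}^{d-1}h_j(t)^{1/(d-1)}\,dt \le \prod_{j=1}^{d-1}\Bigl(\int_{\mathbb{R}^{d-1}} f_j\Bigr)^{1/(d-1)},
\end{equation*}
which, combined with the $\bigl(\int f_d\bigr)^{1/(d-1)}$ factor pulled out above, gives the desired inequality and closes the induction. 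There is no real obstacle here, only a subtlety: the Hölder exponents at the peeling step must be chosen so that the residue has precisely the $1/(d-2)$ exponents required to trigger the inductive hypothesis. Once this bookkeeping is set up correctly the argument runs essentially mechanically.
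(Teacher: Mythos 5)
Your proof is correct and is essentially the paper's argument: both rest on repeated applications of the multilinear H\"older inequality (the paper presents the $d=3$ case explicitly as two Cauchy--Schwarz steps and notes that the general case follows by iterating this). Your version simply packages the iteration as a clean induction on dimension, peeling off $f_d$ by H\"older in the slice variables, invoking the $(d-1)$-dimensional case for each fixed $x_d$, and closing with a one-dimensional H\"older in $x_d$.
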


As Loomis and Whitney point out in \cite{LW}, this inequality is a geometric inequality closely related to the classical isoperimetric inequality. To see this suppose that $\Omega\subset\mathbb{R}^d$ has finite measure. Setting $f_j=\chi_{\pi_j(\Omega)}$ we have that $f_j\circ\pi_j(x)=1$ whenever $\;x\in\Omega$, and so
by the Loomis--Whitney inequality,
\begin{equation}\label{lwg}
|\Omega|\leq |\pi_1(\Omega)|^{\frac{1}{d-1}}\cdots |\pi_d(\Omega)|^{\frac{1}{d-1}}.
\end{equation}
Now, since $|\pi_j(\Omega)|\leq |\partial \Omega|$ for each $j$, we recover the classical isoperimetric inequality $$|\Omega|\leq |\partial\Omega|^{\frac{d}{d-1}},$$ (albeit with suboptimal constant).
Notice also that \eqref{lwg} tells us that
$$|\Omega|\geq \frac{|\Omega|}{|\pi_1(\Omega)|}\cdots\frac{|\Omega|}{|\pi_d(\Omega)|};$$ that is, the Lebesgue measure of a subset of $\mathbb{R}^d$ is bounded below by the product of its ``average widths" in $d$ orthogonal directions.

We remark in passing that the Loomis--Whitney has a rather useful affine-invariant formulation, whereby the particular mappings $\pi_j$ are replaced by general surjections $L_j:\mathbb{R}^d\rightarrow\mathbb{R}^{d-1}$ with the property that $\{\ker L_1,\hdots,\ker L_d\}$ forms a basis of $\mathbb{R}^d$. This follows from the standard case after suitable linear changes of variables.

The simplest proof of the Loomis--Whitney inequality consists of a repeated use of the multilinear H\"older inequality.
For example, for $d=3$ we may use the Cauchy--Schwarz inequality twice to write
\begin{eqnarray*}
\begin{aligned}
\int_{\mathbb{R}^3}f_1(x_2,x_3)^{\frac{1}{2}}&f_2(x_1,x_3)^{\frac{1}{2}}f_3(x_1,x_2)^{\frac{1}{2}}dx\\&=\int_{\mathbb{R}^2}\Bigl(\int_{\mathbb{R}}f_1(x_2,x_3)^{\frac{1}{2}}f_2(x_1,x_3)^{\frac{1}{2}}dx_3\Bigr)
f_3(x_1,x_2)^{\frac{1}{2}}dx'\\
&\leq\int_{\mathbb{R}^2}\Bigl(\int_{\mathbb{R}}f_1(x_2,\cdot)\Bigr)^{\frac{1}{2}}\Bigl(\int_{\mathbb{R}}f_2(x_1,\cdot)\Bigr)^{\frac{1}{2}}f_3(x_1,x_2)^{\frac{1}{2}}dx'\\
&\leq\Bigl(\int_{\mathbb{R}^2}f_1\Bigr)^{\frac{1}{2}}\Bigl(\int_{\mathbb{R}^2}f_2\Bigr)^{\frac{1}{2}}\Bigl(\int_{\mathbb{R}^2}f_3\Bigr)^{\frac{1}{2}}.
\end{aligned}
\end{eqnarray*}
As should be expected, this proof of this special case of the $d$-linear restriction conjecture (where the submanifolds $S_j$ are transversal hyperplanes) does not extend routinely to general transversal $S_1,\hdots,S_d$.  However, an important aspect of it does: if one is prepared to lose an $\varepsilon$ or a logarithmic factor as in Theorems \ref{bcttheorem} or \ref{bcttheoremrevisted}, then one may indeed reduce the general case to a \textit{positive} inequality of Loomis--Whitney type. This positive inequality is the $d$-linear Kakeya inequality, which we now describe.

\subsection{Relation with the multilinear Kakeya problem}\label{fourthree}
As in the linear setting, the multilinear restriction conjecture (Conjecture \ref{MLRC}) implies a certain multilinear Kakeya-type conjecture
involving $2\leq k\leq d$ ``transversal" families of $\delta$-tubes
$\mathbb{T}_1,\hdots,\mathbb{T}_k$ in $\mathbb{R}^d$. The appropriate notion of transversality here is inherited from that for codimension-$1$ submanifolds: we say that
the families $\tubes_1,\hdots,\tubes_k$ of tubes in $\mathbb{R}^d$ are \emph{transversal} if there is a constant $c>0$ such that given any collection of tubes $T_1\in\tubes_1,\hdots,T_k\in\tubes_k$, \begin{equation}\label{naivetrans}|e(T_1)\wedge\cdots\wedge e(T_k)|\geq c.
\end{equation}
Here $e(T)\in\mathbb{S}^{d-1}$ denotes the direction of the long side of a tube $T$.
\begin{conjecture}[Multilinear Kakeya]\label{MLKC}
Let $\varepsilon>0$ and $d\geq k\geq 2$. Suppose that $\tubes_{1},\hdots,\tubes_k$ are transversal families of $\delta$-tubes such that for each $1\leq j\leq k$, $\{e(T_j):T_j\in\mathbb{T}_j\}$ forms a $\delta$-separated subset of $\mathbb{S}^{d-1}$. If $\tfrac{1}{q}\leq \tfrac{d-1}{d}$ and $
\tfrac{d-k}{p}+\tfrac{k}{q}\leq d-1,$ then there is a constant $C_\varepsilon$, independent
of $\delta$ and the families $\tubes_{1},\hdots,\tubes_k$, such that
\begin{equation}\label{mk}
\Bigl\|\prod_{j=1}^{k}\Bigl(\sum_{T_{j}\in\tubes_{j}}\chi_{T_{j}}\Bigr)\Bigr\|_{L^{q/k}(\mathbb{R}^d)}\leq
C_\varepsilon\prod_{j=1}^{k}\delta^{\frac{d}{q}-\frac{d-1}{p'}-\varepsilon}\:
(\#\tubes_{j})^{ \frac{1}{p}}.
\end{equation} 
\end{conjecture}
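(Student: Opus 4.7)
The plan is to reduce Conjecture \ref{MLKC} to the endpoint case $k=d$ by interpolation and slicing, and then to prove the $k=d$ endpoint by an induction-on-scales argument whose base case is the Loomis--Whitney inequality from Section \ref{twotwo}. For the reduction, the trivial bounds $\|\sum_{T_j\in\tubes_j}\chi_{T_j}\|_\infty\le \#\tubes_j$ and $\|\sum_{T_j\in\tubes_j}\chi_{T_j}\|_1=\delta^{d-1}\#\tubes_j$ already place one on the two ``easy'' faces of the conjectured Lebesgue region, so by H\"older interpolation it suffices to prove \eqref{mk} at a single extremal corner, namely $p=\infty$ and $q=d/(d-1)$ in the case $k=d$. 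The reduction from $k<d$ to $k=d$ is handled by slicing: one foliates $\mathbb{R}^d$ by affine $k$-planes $V$ chosen generically with respect to the direction set $\{e(T_j)\}$, so that on each slice the families $\{T_j\cap V\}$ are $\delta$-tubes in $V\cong\mathbb{R}^k$ whose direction $k$-tuples remain $c'$-transversal. Applying the $k$-dimensional instance of the inequality slice-by-slice and integrating in the $(d-k)$-dimensional transverse direction then yields the $k$-linear estimate on $\mathbb{R}^d$.

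The base case of the $k=d$ endpoint is transparent. When each $\tubes_j$ consists of tubes parallel to the coordinate direction $e_j$, one has $\sum_{T_j\in\tubes_j}\chi_{T_j}=g_j\circ\pi_j$ for some nonnegative $g_j$ on $\mathbb{R}^{d-1}$, and the inequality reduces to Loomis--Whitney as in Section \ref{twotwo}, with absolute constant and no $\delta^{-\varepsilon}$ loss. The task is then to pass from axis-aligned direction sets to arbitrary $c$-transversal ones. Following the induction-on-scales template of \cite{BCT} and the spirit of the bilinear-to-linear argument of Section \ref{twothree}, one fixes a large parameter $K$, partitions $\mathbb{S}^{d-1}$ into caps of diameter $\sim 1/K$, and clusters each $\tubes_j$ accordingly; within a cluster the tubes are essentially parallel, while the cluster-directions remain $1/K$-transversal as $d$-tuples. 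A single Loomis--Whitney-type step at the mesoscale $1/K$ applied to the clusters, combined with a rescaling that reinterprets the length-$1$ tubes at spatial scale $1/K$ as unit-length $(\delta K)$-tubes in a rescaled configuration, produces a recursive inequality of the form $\mathcal{C}(\delta)\le K^{A}\,\mathcal{C}(\delta K)$ for the best constant $\mathcal{C}(\delta)$ in \eqref{mk}. Iterating $O(\log(1/\delta)/\log K)$ times and optimising in $K$ produces the $\delta^{-\varepsilon}$ loss.

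The hard part will be the single recursive step. One must show that the Loomis--Whitney inequality is stable under $1/K$-perturbations of the coordinate hyperplanes with only polynomial-in-$K$ loss, and that this local step composes correctly with the rescaling so that the constants multiply rather than add across iterations. This is precisely where the $c$-transversality hypothesis is used quantitatively: without it, the clusters at directional scale $1/K$ could degenerate and the local Loomis--Whitney step would provide no gain. An attractive alternative that entirely avoids the $\varepsilon$-loss at $k=d$ is Guth's algebraic-geometric approach \cite{G}, which partitions $\mathbb{R}^d$ by the zero set of a polynomial of controlled degree; the open cells are handled by induction on $\#\tubes_j$, and the low-dimensional boundary contribution by a B\'ezout-type incidence count. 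Either route, followed by the reductions in the first paragraph, completes the proof of Conjecture \ref{MLKC}.
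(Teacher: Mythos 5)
The statement you are attempting to prove is presented in the paper as a \emph{conjecture}, and for good reason: for $d\geq 3$ and $2\leq k<d$ it remains open. The paper only records a complete resolution in the extreme case $k=d$ (Theorem \ref{guthkak}, due to \cite{BCT} away from the endpoint and \cite{G} at the endpoint), and explicitly emphasises that the angular $\delta$-separation hypothesis and the $\varepsilon$-loss can be dropped \emph{only} when $k=d$. Your argument for the $k=d$ endpoint (induction-on-scales with a Loomis--Whitney base case, or Guth's polynomial partitioning) is essentially the known route, and is fine as far as it goes. The problem is the claimed reduction from $k<d$ to $k=d$.

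The slicing step does not work. If you intersect a $\delta\times\cdots\times\delta\times 1$ tube $T\subset\mathbb{R}^d$ with a generic affine $k$-plane $V$, the set $T\cap V$ is not a $\delta$-tube in $V$: its shape and diameter depend sharply on the angle between $e(T)$ and $V$, and when $e(T)$ is nearly orthogonal to $V$ the slice degenerates to a ball rather than a tube. More seriously, the $\delta$-separation of the direction set $\{e(T_j)\}$ on $\mathbb{S}^{d-1}$ does not descend to $\delta$-separation of the induced directions inside $V\cong\mathbb{R}^k$: an entire $(d-k)$-parameter family of $\delta$-separated directions can project to essentially the same direction in $V$, so the hypotheses of the $k$-dimensional statement fail on the slices. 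This is not a technicality --- it is precisely why the $k<d$ cases are hard. The $\delta$-separation condition is the residual ``curvature'' content of the multilinear Kakeya conjecture, and the $d$-linear case is exactly the one in which that content has evaporated; one cannot recover it by foliating and integrating. Finally, even setting geometry aside, the exponents $q/k$ appearing in \eqref{mk} are typically below $1$, so ``apply the estimate on each slice and integrate in the transverse direction'' does not compose via Minkowski or H\"older in the way you need.

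In short: the $k=d$ portion of your plan reproduces the known Theorem \ref{guthkak}, but the slicing reduction to cover $k<d$ is unsound, and the general statement \eqref{mk} is an open conjecture that the paper itself does not prove. A correct review of the known partial progress for $k<d$ should go through the wavepacket/induction-on-scales arguments in \cite{BCT} and \cite{TVV}, which establish weaker (non-endpoint, curvature-dependent) estimates rather than the full conjectured range.
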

This conjecture is elementary for $d=2$. For details of the
progress for $d\geq 3$,
see \cite{TVV} and
\cite{BCT}.

As with the multilinear restriction conjecture, the extreme case $k=d$ is rather special, and indeed much more can be said. In particular, the angular $\delta$-separation condition (which, as discussed in Section \ref{two}, is the manifestation of curvature in this setting) within each family of tubes $\mathbb{T}_j$ may be dropped, as can the $\varepsilon$ loss in \eqref{mk}. Moreover, the tubes themselves may have arbitrary (possibly infinite) length.
Unusually for euclidean Kakeya-type problems, this case has been resolved completely.
\begin{theorem}[$d$-linear Kakeya \cite{BCT}, \cite{G}]\label{guthkak}
Let $\mathbb{T}_1,\hdots, \mathbb{T}_d$ be families of doubly-infinite $\delta$-tubes. If these families are transversal and $q\geq \frac{d}{d-1}$ then
\begin{equation*}
\Bigl\|\prod_{j=1}^d\Bigl(\sum_{T_j\in\mathbb{T}_j}\chi_{T_j}\Bigr)\Bigr\|_{L^{q/d}(\mathbb{R}^d)}\lesssim\prod_{j=1}^d\delta^{d/q}\#\mathbb{T}_j.\end{equation*}
\end{theorem}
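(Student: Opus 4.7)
The plan is to reduce to the endpoint $q=d/(d-1)$, at which the inequality takes the Loomis--Whitney-shaped form
\[
\int_{\mathbb{R}^d}\prod_{j=1}^d\Bigl(\sum_{T_j\in\mathbb{T}_j}\chi_{T_j}\Bigr)^{1/(d-1)} \lesssim \prod_{j=1}^d\bigl(\delta^{d-1}\#\mathbb{T}_j\bigr)^{1/(d-1)}.
\]
The remaining values $q>d/(d-1)$ then follow by log-convexity of $L^p$ norms, interpolating this endpoint with the trivial pointwise estimate $\prod_j\sum_{T_j}\chi_{T_j}\le\prod_j\#\mathbb{T}_j$ (the case $q=\infty$, where $\delta^{d/q}=1$); a short computation verifies that the exponent of $\delta$ interpolates exactly to $\delta^{d/q}$.

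The endpoint is guided by the axis-parallel model, in which the tubes in each $\mathbb{T}_j$ point in the direction of the $j$th coordinate axis. Then $\sum_{T_j\in\mathbb{T}_j}\chi_{T_j}$ factors as $f_j\circ\pi_j$ for the coordinate projection $\pi_j\colon\mathbb{R}^d\to\mathbb{R}^{d-1}$ dropping $x_j$, with $\int f_j\le\delta^{d-1}\#\mathbb{T}_j$, and the classical Loomis--Whitney theorem yields the endpoint inequality with a sharp dimensional constant. For the general transversal case I would follow the heat-flow monotonicity argument of \cite{BCT}: replace each $\chi_{T_j}$ by a Gaussian adapted to the direction of $T_j$, evolve these Gaussians under an anisotropic heat equation, and show that the resulting multilinear functional is monotone in time. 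In the long-time limit the Gaussians spread to an essentially isotropic state in which a Loomis--Whitney/Cauchy--Schwarz step applies directly; transversality appears as the coercivity lower bound forcing the time-derivative of the functional to have the right sign. This yields the endpoint with an $\varepsilon$-loss $\delta^{-\varepsilon}$, which already suffices for the $R^\varepsilon$-loss statement of Theorem \ref{bcttheorem}; to remove the $\varepsilon$ and reach the sharp statement above, one invokes Guth's polynomial partitioning from \cite{G}: decompose $\mathbb{R}^d$ by the zero set of a polynomial of controlled degree, induct on the number of tubes in each cell using B\'ezout-type bounds to limit how many cells any single tube can meet, and recurse at a smaller scale near the zero set.

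The main obstacle, in either incarnation, is the passage from genuinely non-parallel but transversal tubes to the axis-parallel Loomis--Whitney configuration. In the heat-flow argument this is the verification of the infinitesimal monotonicity inequality, whose only non-trivial input is the lower bound on $|e(T_1)\wedge\cdots\wedge e(T_d)|$; in the polynomial-partitioning argument it is the control of the wall contribution, which again rests on transversality to limit how many tubes can cluster along the algebraic surface. Everything else is careful bookkeeping of how the classical Loomis--Whitney inequality propagates through whichever geometric reduction one chooses.
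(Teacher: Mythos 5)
The paper does not prove this theorem; it states it and cites \cite{BCT} for $q>\tfrac{d}{d-1}$ and Guth \cite{G} for the endpoint $q=\tfrac{d}{d-1}$, explicitly declining to discuss Guth's argument (the introduction defers to \cite{G} and \cite{CV} for that). Your overall skeleton matches the paper's attribution: the interpolation reduction to the endpoint is correct (with $p_0=\tfrac{1}{d-1}$, $p_1=\infty$ and $\theta = 1-\tfrac{d}{q(d-1)}$, one checks $(d-1)(1-\theta)=d/q$, which recovers the claimed power of $\delta$), the axis-parallel model that factors $\sum_{T_j}\chi_{T_j}=f_j\circ\pi_j$ and reduces to classical Loomis--Whitney is precisely the paper's discussion surrounding \eqref{comblw}, and the heat-flow monotonicity method of \cite{BCT} is indeed what handles $q>\tfrac{d}{d-1}$.

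Where your proposal goes astray is in the description of Guth's endpoint argument. What you describe --- partitioning $\mathbb{R}^d$ into cells by the zero set of a polynomial of controlled degree, bounding how many cells a tube meets via B\'ezout, inducting on the number of tubes per cell, and recursing near the wall --- is the polynomial \emph{partitioning} / cell-decomposition method (the Guth--Katz distinct-distances technique, later used by Guth for the restriction problem). Guth's proof of the endpoint multilinear Kakeya inequality in \cite{G} is a different algebraic argument: it uses the polynomial ham sandwich theorem (Stone--Tukey, via Borsuk--Ulam) to produce a single algebraic hypersurface of controlled degree bisecting a family of weighted cubes, and then controls the \emph{directed volume} (``visibility'') of that hypersurface to close an induction; there is no cell decomposition or B\'ezout counting in the sense you describe. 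This is exactly what the reference \cite{CV} in the paper's footnote (``The multilinear Kakeya theorem via the Borsuk--Ulam theorem'') signals. Your sketch would therefore not reproduce \cite{G} as written; if you want to carry out your strategy you are really proposing a different (and to my knowledge not-yet-standard for this exact statement) proof, and you would need to address the usual difficulty that tubes are thickened line segments rather than algebraic sets when running the wall/cell dichotomy. Also, a minor point of formulation: the heat-flow argument of \cite{BCT} is most naturally stated as proving the $q>\tfrac{d}{d-1}$ range directly rather than ``the endpoint with a $\delta^{-\varepsilon}$ loss''; these are morally equivalent via the same interpolation you set up, but the direction of implication you want is endpoint $\Rightarrow$ full range, not the reverse, and the heat-flow argument does not deliver the endpoint even with a loss that shrinks to zero at a useful rate.
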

Theorem \ref{guthkak} is due to Carbery, Tao and the author for $q>\frac{d}{d-1}$, and Guth at the endpoint $q=\frac{d}{d-1}$.

Some remarks are in order. Theorem \ref{guthkak} is equivalent to Guth's endpoint estimate
\begin{equation}\label{guthest}
\Bigl\|\prod_{j=1}^d\Bigl(\sum_{T_j\in\mathbb{T}_j}\chi_{T_j}\Bigr)\Bigr\|_{L^{\frac{1}{d-1}}(\mathbb{R}^d)}\lesssim\prod_{j=1}^d\delta^{d-1}\#\mathbb{T}_j,\end{equation} which is a certain generalisation of the Loomis--Whitney inequality in disguise. Indeed, \eqref{guthest} has an equivalent functional form which may be viewed as a certain ``vector" or ``combinatorial" Loomis--Whitney inequality, namely
\begin{equation}\label{comblw}
\int_{\mathbb{R}^d}\prod_{j=1}^d\Bigl(\sum_{\alpha_j\in\mathcal{A}_j}f_{\alpha_j}\circ\pi_{\alpha_j}\Bigr)^{\frac{1}{d-1}}\lesssim\prod_{j=1}^d\Bigl(\sum_{\alpha_j\in\mathcal{A}_j}\int_{\mathbb{R}^{d-1}}
f_{\alpha_j}\Bigr)^{\frac{1}{d-1}},
\end{equation}
where for each $j$, $\mathcal{A}_j$ is an indexing set and $\pi_{\alpha_j}$ is a linear map which is sufficiently close to the fixed $\pi_j$ (the $j$th coordinate hyperplane projection). It should be remarked that the $k$-linear Kakeya conjecture only has such an equivalent functional form when $k=d$, a feature which relies crucially on the absence of the angular $\delta$-separation (or curvature) condition.

To see that \eqref{comblw} implies \eqref{guthest} we simply set $f_{\alpha_j}=\chi_{B(\alpha_j)}$, where $B(\alpha_j)$ denotes a $\delta$-ball in $\mathbb{R}^{d-1}$, then $$f_{\alpha_j}\circ\pi_{\alpha_j}=\chi_{T(\alpha_j)},$$ where $T(\alpha_j)=\pi_{\alpha_j}^{-1}B(\alpha_j)$ is a doubly infinite cylindrical tube in $\mathbb{R}^d$ of width $\sim\delta$ and direction $\ker\pi_{\alpha_j}$. To see that \eqref{guthest} implies \eqref{comblw} we observe that it suffices, by scaling and a density argument, to prove \eqref{comblw} for input functions $f_{\alpha_j}$ which are finite sums of characteristic functions of $\delta$-balls in $\mathbb{R}^{d-1}$.
Similar considerations reveal that \eqref{guthest} also self-improves to
\begin{equation}\label{guthest3}
\Bigl\|\prod_{j=1}^d\Bigl(\sum_{T_j\in\mathbb{T}_j}\mu_{T_j}*\chi_{T_j}\Bigr)\Bigr\|_{L^{\frac{1}{d-1}}(\mathbb{R}^d)}^{\frac{1}{d-1}} \lesssim\delta^d\Bigl(\prod_{j=1}^d\|\mu_{T_j}\|\Bigr)^{\frac{1}{d-1}},
\end{equation}
where $\mu_{T_j}$ is a finite measure on $\mathbb{R}^d$ for each $T_j\in\mathbb{T}_j$, $1\leq j\leq d$. As such calculations show, the implicit constant here is the same as that in \eqref{guthest}.

Perhaps the most important feature of the $d$-linear Kakeya inequality \eqref{guthest} is that it possess a certain scale-invariance, or ``self-similarity", property that we now describe.
For each $0<\delta\ll 1$ let $\mathcal{C}_{\kak}(\delta)$ denote the smallest constant $C$ in
\begin{equation}\label{guthest2}
\Bigl\|\prod_{j=1}^d\Bigl(\sum_{T_j\in\mathbb{T}_j}\chi_{T_j}\Bigr)\Bigr\|_{L^{\frac{1}{d-1}}(\mathbb{R}^d)}^{\frac{1}{d-1}}\leq C\delta^d\Bigl(\prod_{j=1}^d\#\mathbb{T}_j\Bigr)^{\frac{1}{d-1}}
\end{equation}
over all transversal families $\mathbb{T}_1,\hdots,\mathbb{T}_d$ of $\delta\times\cdots\times\delta\times 1$-tubes.
(The $d$-linear Kakeya theorem tells us that $\mathcal{C}_\kak (\delta)\lesssim 1$.)

The following proposition is somewhat implicit in \cite{BCT}.
\begin{prop}\label{kakkakkak}
There exists a constant $c\geq 1$ independent of $0<\delta\leq\delta'\leq 1$ such that
\begin{equation}\label{kakrecursive}
\mathcal{C}_{\kak}(\delta)\leq c\mathcal{C}_{\kak}(\delta/\delta')\mathcal{C}_{\kak}(\delta').
\end{equation}
\end{prop}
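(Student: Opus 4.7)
The plan is a two-scale pigeonholing: partition $\mathbb{R}^d$ into a grid of $\delta'$-cells and combine the $(\delta/\delta')$-scale Kakeya inequality inside each cell with the $\delta'$-scale Kakeya inequality across cells.

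Let $\mathbb{T}_1,\ldots,\mathbb{T}_d$ be a transversal system of $\delta$-tubes and set $f_j:=\sum_{T\in\mathbb{T}_j}\chi_T$. For each cell $B$ and each $j$, let $n_j(B)$ denote the number of tubes of $\mathbb{T}_j$ meeting $B$. The fine-scale observation is: under the affine rescaling $x\mapsto(x-x_B)/\delta'$ that sends $B$ to the unit cube, the $d$ families of $\delta$-tubes meeting $B$ become $d$ transversal families of $(\delta/\delta')$-tubes (of length $\sim 1$ inside the unit cube), with transversality constant inherited uniformly from the original system. Applying the definition of $\mathcal{C}_{\kak}(\delta/\delta')$ to this rescaled configuration, together with the Jacobian factor $(\delta')^d$, yields
\begin{equation}\label{fine}
\int_B \prod_{j=1}^d f_j^{1/(d-1)} \leq c\,\mathcal{C}_{\kak}(\delta/\delta')\,\delta^d\,\prod_{j=1}^d n_j(B)^{1/(d-1)}.
\end{equation}

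To sum \eqref{fine} over $B$, pass to a coarse $\delta'$-tube family: for each $T\in\mathbb{T}_j$ let $\widetilde T$ be a $\delta'$-tube of a fixed thickness $\sim\delta'$ centred on the axis of $T$, and set $\widetilde{\mathbb{T}}_j:=\{\widetilde T:T\in\mathbb{T}_j\}$. Then $\widetilde{\mathbb{T}}_j$ is a transversal family of $\delta'$-tubes of cardinality $\#\mathbb{T}_j$, and---for the thickness chosen large enough---$T\cap B\neq\emptyset$ implies $x_B\in\widetilde T$, giving $n_j(B)\leq g_j(x_B)$ where $g_j:=\sum_{\widetilde T\in\widetilde{\mathbb{T}}_j}\chi_{\widetilde T}$. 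Since each $g_j$ is essentially constant on each $\delta'$-cell (the thickness of $\widetilde T$ exceeding the cell diameter), a Riemann sum comparison and the definition of $\mathcal{C}_{\kak}(\delta')$ give
\begin{equation}\label{coarse}
\sum_B \prod_{j=1}^d n_j(B)^{1/(d-1)} \lesssim (\delta')^{-d}\int\prod_{j=1}^d g_j^{1/(d-1)} \leq c\,\mathcal{C}_{\kak}(\delta')\,\prod_{j=1}^d(\#\mathbb{T}_j)^{1/(d-1)}.
\end{equation}
Summing \eqref{fine} over $B$ and substituting \eqref{coarse} yields
\[
\int\prod_{j=1}^d f_j^{1/(d-1)}\leq c\,\mathcal{C}_{\kak}(\delta/\delta')\,\mathcal{C}_{\kak}(\delta')\,\delta^d\,\prod_{j=1}^d(\#\mathbb{T}_j)^{1/(d-1)},
\]
from which \eqref{kakrecursive} follows.

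The main technical hurdle is making the fine-scale step match the definition of $\mathcal{C}_{\kak}(\delta/\delta')$ cleanly: the rescaled tubes naturally have length $1/\delta'\geq 1$ inside the unit cube, so they must be canonically truncated or covered by $O(1)$ many unit-length $(\delta/\delta')$-tubes, and one must verify that the transversality constant is inherited uniformly in $B$. Together with the mild thickening of $\widetilde T$ at the coarse scale---which uses the stability of $\mathcal{C}_{\kak}$ under $O(1)$-dilations of the scale---these bookkeeping factors are absorbed into the absolute constant $c$.
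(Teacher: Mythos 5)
Your proposal is correct and follows essentially the same route as the paper: tile $\mathbb{R}^d$ by $\delta'$-cubes, apply the Kakeya constant at scale $\delta/\delta'$ on each rescaled cube (noting that $T_j\cap Q$ lives in a $\delta\times\cdots\times\delta\times\delta'$ box), then thicken each $T_j$ to a $\delta'$-tube $\widetilde T_j$ so that the per-cube tube counts are controlled by $\sum\chi_{\widetilde T_j}$, and finish with the Kakeya constant at scale $\delta'$. The paper passes from the pointwise bound $\#\mathbb{T}_j^Q\le\sum\chi_{\widetilde T_j}(x_Q)$ to the integral \eqref{kakdec3} by averaging over $x_Q\in Q$, whereas you invoke that $g_j$ is essentially constant on a $\delta'$-cell plus a Riemann-sum comparison; these are the same estimate dressed differently, and both absorb the resulting $O(1)$ factors into $c$.
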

\begin{proof}
Let $0<\delta\leq\delta'\leq 1$ and tile $\mathbb{R}^d$ by cubes $Q$ of diameter $\delta'$.
Clearly
\begin{eqnarray}\label{kakdec}
\begin{aligned}
\Bigl\|\prod_{j=1}^d\Bigl(\sum_{T_j\in\mathbb{T}_j}\chi_{T_j}\Bigr)\Bigr\|_{L^{\frac{1}{d-1}}(\mathbb{R}^d)}^{\frac{1}{d-1}}&=
\sum_Q\Bigl\|\prod_{j=1}^d\Bigl(\sum_{T_j\in\mathbb{T}_j}\chi_{T_j}\Bigr)\Bigr\|_{L^{\frac{1}{d-1}}(Q)}^{\frac{1}{d-1}}\\
&=\sum_Q\Bigl\|\prod_{j=1}^d\Bigl(\sum_{T_j\in\mathbb{T}_j^Q}\chi_{T_j\cap Q}\Bigr)\Bigr\|_{L^{\frac{1}{d-1}}(\mathbb{R}^d)}^{\frac{1}{d-1}},
\end{aligned}
\end{eqnarray}
where $$\mathbb{T}_j^Q=\{T_j\in\mathbb{T}_j: T_j\cap Q\not=0\}.$$
Fix a cube $Q$. Observe that $T_j\cap Q$ is contained in a tube of long side $\delta'$ and $d-1$ short sides of length $\delta$. Applying the definition of $\mathcal{C}_\kak$, after a suitable scaling, reveals that
\begin{equation}\label{kakdec1}
\Bigl\|\prod_{j=1}^d\Bigl(\sum_{T_j\in\mathbb{T}_j^Q}\chi_{T_j\cap Q}\Bigr)\Bigr\|_{L^{\frac{1}{d-1}}(\mathbb{R}^d)}^{\frac{1}{d-1}}\leq \delta^d\mathcal{C}_\kak(\delta/\delta')\Bigl(\prod_{j=1}^d\#\mathbb{T}_j^Q\Bigr)^{\frac{1}{d-1}}.
\end{equation}
Now, for each $T_j$ let $\widetilde{T}_j=T_j+B(0,c_1\delta')$, where $c_1\geq 1$ is chosen so that if $T_j\cap Q\not=\emptyset$ then $Q\subseteq \widetilde{T}_j$. Thus $\widetilde{T}_j$ is contained in an $O(\delta)\times\cdots\times O(\delta)\times O(1)$-tube, with the same centre and orientation as $T_j$. With this definition of $\widetilde{T}_j$ we have
$$
\#\mathbb{T}_j^Q\leq\sum_{T_j\in\mathbb{T}_j}\chi_{\widetilde{T}_j}(x_Q)$$
for all points $x_Q\in Q$, and so
\begin{equation*}
\sum_Q\Bigl(\prod_{j=1}^d\#\mathbb{T}_j^Q\Bigr)^{\frac{1}{d-1}}\leq\sum_Q\Bigl(\prod_{j=1}^d\sum_{T_j\in\mathbb{T}_j}\chi_{\widetilde{T}_j}(x_Q)\Bigr)^{\frac{1}{d-1}},
\end{equation*}
which upon averaging in all possible choices of points $x_Q\in Q$ for each $Q$ yields
\begin{equation}\label{kakdec3}
\sum_Q\Bigl(\prod_{j=1}^d\#\mathbb{T}_j^Q\Bigr)^{\frac{1}{d-1}}\lesssim (\delta')^{-d}\int_{\mathbb{R}^d}\Bigl(\prod_{j=1}^d\sum_{T_j\in\mathbb{T}_j}\chi_{\widetilde{T}_j}\Bigr)^{\frac{1}{d-1}}.
\end{equation}
Applying the definition of $\mathcal{C}_\kak$ once again\footnote{There is a minor technical point here. The ``tubes" $\widetilde{T}_j$ are a little larger than $\delta'\times\cdots\times\delta'\times 1$-tubes, as the definition of $\mathcal{C}_\kak(\delta')$ requires. However, each $\widetilde{T}_j$ may be covered by boundedly many of these admissible tubes, properly justifying the bound \eqref{kakdec4}.} gives
\begin{equation}\label{kakdec4}
\int_{\mathbb{R}^d}\Bigl(\prod_{j=1}^d\sum_{T_j\in\mathbb{T}_j}\chi_{\widetilde{T}_j}\Bigr)^{\frac{1}{d-1}}\lesssim (\delta')^d\mathcal{C}_\kak(\delta')\Bigl(\prod_{j=1}^d\#\mathbb{T}_j\Bigr)^{\frac{1}{d-1}}.
\end{equation}
Combining \eqref{kakdec}, \eqref{kakdec1}, \eqref{kakdec3} and \eqref{kakdec4} gives $\mathcal{C}_\kak(\delta)\lesssim\mathcal{C}_\kak(\delta/\delta')\mathcal{C}_\kak(\delta')$, with implicit constant uniform in $0<\delta\leq\delta'\leq 1$.
\end{proof}

Proposition \ref{kakkakkak} served as important motivation for the heat-flow approach to the $d$-linear Kakeya conjecture in \cite{BCT}, that proved to be effective away from the endpoint. Indeed one may interpret the monotonicity formulae in \cite{BCT} as recursive inequalities with respect to the continuous time parameter (see also \cite{Bennett} for a broader discussion of this perspective).
As we shall see next, a closely-related argument leads to a similar recursive inequality which explicitly connects the $d$-linear restriction and Kakeya inequalities. This argument, which originates in Bourgain \cite{Bo} in a linear setting, leads to the somewhat surprising \emph{near equivalence} of the $d$-linear restriction and Kakeya problems.

Fix $S_1,\hdots, S_d$ transversal, and for each $R\geq 1$ let $\mathcal{C}_{\rest}(R)$ denote the smallest constant in the inequality
$$\Bigl\|\prod_{j=1}^d\widehat{f_jd\sigma_j}\Bigr\|_{L^{\frac{2}{d-1}}(B(0,R))}\leq C\prod_{j=1}^d\|f_j\|_{L^2(d\sigma_j)}.$$
The endpoint $d$-linear restriction conjecture is thus $\mathcal{C}_\rest(R)\lesssim 1$, and Theorem \ref{bcttheoremrevisted} states that $\mathcal{C}_\rest(R)\lesssim (\log R)^\kappa$ for some $\kappa>0$.
For technical reasons it will be convenient to formulate our recursive inequality in terms of a minor variant of $\mathcal{C}_\rest(R)$. Let $\widetilde{\mathcal{C}}_\rest(R)$ denote the smallest constant $C$ for which the inequality
\begin{equation}\label{enoughconst}
\Bigl\|\prod_{j=1}^d \widehat{f}_j\Bigr\|_{L^{\frac{2}{d-1}}(B(0;R))}\leq CR^{-\frac{d}{2}}\prod_{j=1}^d\|f_j\|_2
\end{equation}
holds
over all $R\geq 1$ and all functions $f_1,\hdots,f_d$ with $\supp (f_j)\subseteq A_j(R):=S_j+B(0,c/R)$ for each $j$. Here $c$ is a positive constant which will be taken sufficiently large for certain technical matters to simplify.
The quantities $\mathcal{C}_\rest(R)$ and $\widetilde{\mathcal{C}}_\rest(R)$
are connected by the following elementary manifestation of the uncertainty principle, whose proof is a straightforward adaptation of that of Proposition 4.3 in \cite{TVV}.
\begin{lemma}\label{tvvlemma}
$\mathcal{C}_\rest(R)\lesssim\widetilde{\mathcal{C}}_\rest(R)$ with implicit constant independent of $R$.
\end{lemma}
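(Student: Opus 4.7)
The lemma is a standard uncertainty-principle argument: on $B(0,R)$ the Fourier transform $\widehat{f_jd\sigma_j}$ ``cannot distinguish'' the singular measure $f_jd\sigma_j$ from a smoothed-out version living on the $c/R$-neighbourhood $A_j(R)$ of $S_j$. Once we produce such a replacement function, applying $\widetilde{\mathcal{C}}_\rest(R)$ and collecting factors of $R$ will close the estimate.

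The plan is as follows. First I would fix an even Schwartz function $\eta$ on $\mathbb{R}^d$ with $\eta \ge 1$ on the unit ball and $\supp\widehat{\eta}\subset B(0,c)$ for the constant $c$ in the definition of $A_j(R)$. Set $\eta_R(\xi):=\eta(\xi/R)$, so that $\eta_R\ge 1$ on $B(0,R)$ and its inverse Fourier transform $\eta_R^{\vee}$ is supported in $B(0,c/R)$. Given $f_j \in L^2(d\sigma_j)$, define the replacement functions
\begin{equation*}
F_j := \eta_R^{\vee} * (f_jd\sigma_j),
\end{equation*}
which are indeed \emph{functions} (not measures) supported in $S_j+B(0,c/R)=A_j(R)$, with $\widehat{F_j}=\eta_R\,\widehat{f_jd\sigma_j}$. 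In particular, on $B(0,R)$ we have the pointwise domination $|\widehat{f_jd\sigma_j}|\le |\widehat{F_j}|$.

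Next I would estimate $\|F_j\|_2$. By Plancherel,
\begin{equation*}
\|F_j\|_2^2 = \iint K(y-z)f_j(y)\overline{f_j(z)}\,d\sigma_j(y)\,d\sigma_j(z),\qquad K:=|\eta_R^{\vee}|^2\text{-type kernel,}
\end{equation*}
where $K$ is supported in $B(0,2c/R)$ and satisfies $\|K\|_\infty \lesssim R^d$ (since $\|\eta_R\|_2^2\sim R^d$). By Schur's test applied to $K$ restricted to $S_j\times S_j$, together with the elementary surface-measure bound $\sigma_j(S_j\cap B(y,C/R))\lesssim R^{-(d-1)}$ (valid because $S_j$ is a smooth hypersurface), one obtains
\begin{equation*}
\sup_{y\in S_j}\int_{S_j}|K(y-z)|\,d\sigma_j(z)\lesssim R^d\cdot R^{-(d-1)} = R,
\end{equation*}
hence $\|F_j\|_2\lesssim R^{1/2}\|f_j\|_{L^2(d\sigma_j)}$. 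Feeding this into the definition of $\widetilde{\mathcal{C}}_\rest(R)$ and using the pointwise domination above gives
\begin{equation*}
\Bigl\|\prod_{j=1}^d\widehat{f_jd\sigma_j}\Bigr\|_{L^{\frac{2}{d-1}}(B(0,R))}\le \Bigl\|\prod_{j=1}^d\widehat{F_j}\Bigr\|_{L^{\frac{2}{d-1}}(B(0,R))}\le \widetilde{\mathcal{C}}_\rest(R)R^{-d/2}\prod_{j=1}^d\|F_j\|_2,
\end{equation*}
and the product of the $R^{1/2}\|f_j\|_{L^2(d\sigma_j)}$ bounds combines with $R^{-d/2}$ to cancel precisely, yielding $\mathcal{C}_\rest(R)\lesssim\widetilde{\mathcal{C}}_\rest(R)$.

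The only place where care is required is the choice of $\eta$ (one needs simultaneously $\eta\ge 1$ on $B(0,1)$, Schwartz decay, and compact Fourier support on the scale $c$ matching the thickening in the definition of $A_j(R)$; a Fej\'er-type construction suffices) and the Schur's-test computation, which is the quantitative form of the uncertainty principle here. Everything else is routine Plancherel/Fubini bookkeeping, and the argument is essentially that of Proposition~4.3 in \cite{TVV} adapted to the $d$-linear setting.
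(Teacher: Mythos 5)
Your proposal is correct and is precisely the uncertainty-principle argument the paper invokes by citing Proposition~4.3 of \cite{TVV}: mollify $f_j\,d\sigma_j$ to a function $F_j=\eta_R^\vee*(f_jd\sigma_j)$ supported in $A_j(R)$ with $\widehat{F_j}=\eta_R\widehat{f_jd\sigma_j}$ dominating $\widehat{f_jd\sigma_j}$ on $B(0,R)$, then verify $\|F_j\|_2\lesssim R^{1/2}\|f_j\|_{L^2(d\sigma_j)}$ via Schur's test and the surface-measure estimate $\sigma_j(S_j\cap B(y,C/R))\lesssim R^{-(d-1)}$, so that the $d$ factors of $R^{1/2}$ cancel the $R^{-d/2}$ in the definition of $\widetilde{\mathcal{C}}_\rest(R)$. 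The construction of $\eta$ and the exponent bookkeeping are all in order (the constant $c$ in the definition of $A_j(R)$ is explicitly allowed to be taken large in the paper, so no issue arises there).
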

The following key proposition is somewhat implicit in \cite{BCT}.
\begin{prop}\label{KRimpliesR} There exists a constant $c\geq 1$ independent of $R$ such that
\begin{equation}\label{restrec}\widetilde{\mathcal{C}}_{\rest}(R)\leq c\widetilde{\mathcal{C}}_{\rest}(R^{1/2})
\mathcal{C}_{\kak}(R^{-1/2}).\end{equation}
\end{prop}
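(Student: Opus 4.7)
The plan is to run a Bourgain-style induction on scales: bootstrap the $d$-linear restriction inequality at scale $R$ by combining the same inequality at the smaller scale $R^{1/2}$ with the $d$-linear Kakeya inequality at scale $\delta=R^{-1/2}$, using a wave-packet decomposition to mediate between the two. Concretely, the smaller restriction scale will handle the contribution inside each $R^{1/2}$-ball, while the Kakeya inequality will absorb the combinatorics of summing these contributions across balls.

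First I would finitely-overlappingly cover each $A_j(R)=S_j+B(0,c/R)$ by boxes $A_{j,\alpha}$ of dimensions $\sim R^{-1/2}$ in the directions tangent to $S_j$ and thickness $\sim R^{-1}$ in the normal direction, and correspondingly decompose $f_j=\sum_\alpha f_{j,\alpha}$ with $\supp f_{j,\alpha}\subseteq A_{j,\alpha}$. Each such $A_{j,\alpha}$ sits inside $A_j(R^{1/2})$, so (partial) sums of the $f_{j,\alpha}$ are admissible inputs for the definition of $\widetilde{\mathcal{C}}_\rest(R^{1/2})$. The uncertainty principle then says that each $\widehat{f}_{j,\alpha}$ is, up to rapidly decaying tails, concentrated on a tube $T_{j,\alpha}^{*}$ of dimensions $R^{1/2}\times\cdots\times R^{1/2}\times R$ with long axis along the outward normal to $S_j$ at the centre of $\alpha$.

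Next I would tile $B(0,R)$ by disjoint balls $B_k=B(x_k,R^{1/2})$. On each $B_k$ only those $\alpha$ with $T_{j,\alpha}^{*}\cap B_k\neq\emptyset$ contribute meaningfully, so (modulo Schwartz tails) the definition of $\widetilde{\mathcal{C}}_\rest(R^{1/2})$, applied to $g_j^{(k)}:=\sum_{\alpha:\,T_{j,\alpha}^{*}\cap B_k\neq\emptyset}f_{j,\alpha}$, yields
\[
\Bigl\|\prod_{j=1}^d\widehat{f}_j\Bigr\|_{L^{2/(d-1)}(B_k)}\lesssim \widetilde{\mathcal{C}}_\rest(R^{1/2})\,R^{-d/4}\prod_{j=1}^d\Bigl(\sum_{\alpha:\,T_{j,\alpha}^{*}\cap B_k\neq\emptyset}\|f_{j,\alpha}\|_2^2\Bigr)^{1/2},
\]
where the $L^2$-factor uses the support-orthogonality identity $\|g_j^{(k)}\|_2^2=\sum_{\alpha:\,T_{j,\alpha}^{*}\cap B_k\neq\emptyset}\|f_{j,\alpha}\|_2^2$. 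Raising to the $2/(d-1)$-power and summing over $k$ (the $B_k$ tile $B(0,R)$, so the sum reconstructs the full $L^{2/(d-1)}$-integral) reduces the problem to bounding a ball-sum which, exactly as in the proof of Proposition \ref{kakkakkak}, is majorised by $|B_k|^{-1}$ times the integral $\int_{\mathbb{R}^d}\prod_j(\sum_\alpha\|f_{j,\alpha}\|_2^2\,\chi_{\widetilde{T}_{j,\alpha}})^{1/(d-1)}$, where $\widetilde{T}_{j,\alpha}$ is a slight thickening of $T_{j,\alpha}^{*}$ containing every $B_k$ it meets. Applying the mass-weighted $d$-linear Kakeya estimate \eqref{guthest3} (with the point-mass measure $\mu_{T_{j,\alpha}}$ of total mass $\|f_{j,\alpha}\|_2^2$) to the $\delta\times\cdots\times\delta\times 1$ tubes produced by rescaling physical space by $R^{-1}$ introduces the factor $\mathcal{C}_\kak(R^{-1/2})$; collecting all resulting powers of $R$ then delivers the recursion \eqref{restrec}.

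I expect the main obstacle to be making the wave-packet analysis rigorous: the heuristic that $\widehat{f}_{j,\alpha}$ is ``supported'' on the tube $T_{j,\alpha}^{*}$ must be converted into quantitative control over the Schwartz tails of each packet, so that truncating to the relevant sub-collection on each $B_k$ does not destroy the localisation gain. A secondary delicate point is that $A_j(R)$ is strictly thinner than $A_j(R^{1/2})$ in the normal direction; this extra thinness must be exploited carefully in the wave-packet bookkeeping so that the global $R^{-d/2}$ normalisation in the definition of $\widetilde{\mathcal{C}}_\rest(R)$ is recovered on passing from the scale $R^{1/2}$ local bound (which naturally carries only $R^{-d/4}$).
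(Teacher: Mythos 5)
Your proposal follows essentially the same strategy as the paper: wave-packet decomposition of the $f_j$ into pieces adapted to $R^{-1/2}$-caps, localisation of the $L^{2/(d-1)}$ norm to balls of radius $R^{1/2}$, application of the scale-$R^{1/2}$ restriction estimate locally, and finally the mass-weighted Kakeya inequality \eqref{guthest3} to assemble the local contributions. The one place where the paper's execution is structurally cleaner and worth comparing to your plan: the paper does \emph{not} first filter $f_j$ down to the packets whose dual tubes meet a given ball, and then try to show $\widehat{f}_j\approx\widehat{g}_j^{(k)}$ on $B_k$ up to tails. Instead it inserts a nonnegative localiser $\widehat{\phi_{R^{1/2}}^{x}}$ (bounded below on $B(x,R^{1/2})$) and applies $\widetilde{\mathcal{C}}_\rest(R^{1/2})$ to $\phi_{R^{1/2}}^x*f_j$ in full, integrates continuously in $x$ over $B(0,R)$ rather than summing over a discrete tiling, and only \emph{then} decomposes into caps $\rho_j$; the only place Schwartz tails arise is in replacing $\|\widehat{f}_{j,\rho_j}\|_{L^2(B(x,R^{1/2}))}^2$ by a tube-convolution, which is handled cleanly by dividing by a Schwartz function $\psi_{\rho_j}$ comparable to $1$ on $\rho_j$ and applying Jensen to $\widehat{f}_{j,\rho_j}=\widehat{\tilde f_{j,\rho_j}}*\widehat{\psi_{\rho_j}}$. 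This avoids the quasi-norm headache of estimating a discarded sum in $L^{2/(d-1)}$ with $2/(d-1)<1$. Your instinct to flag the wave-packet tails and the normalisation bookkeeping ($R^{-d/4}$ local versus $R^{-d/2}$ global) as the delicate points is exactly right; the paper's continuous-averaging and $\psi_{\rho_j}$-division devices are precisely how those two issues are dispatched, and you would do well to adopt them rather than fight the tails after filtering.
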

Before we prove this proposition, let us see how it may be combined with Theorem \ref{guthkak} in order to deduce Theorem \ref{bcttheoremrevisted}. Since Guth's endpoint $d$-linear Kakeya inequality amounts to the bound $\mathcal{C}_{\kak}(\delta)\leq C$ for some constant $C$ independent of $\delta$, by Proposition \ref{KRimpliesR} applied $O(\log\log R)$ times we have
\begin{equation}\label{iterrest}
\widetilde{\mathcal{C}}_{\rest}(R)\leq cC\widetilde{\mathcal{C}}_{\rest}(R^{1/2})\leq (cC)^2\widetilde{\mathcal{C}}_{\rest}(R^{1/4})\leq\cdots\leq (cC)^{O(\log\log R)}\widetilde{\mathcal{C}}_\rest(100).
\end{equation}
Theorem \ref{bcttheoremrevisted} now follows from Lemma \ref{tvvlemma} on observing that $(cC)^{O(\log\log R)}=O(\log R)^\kappa$ for some $\kappa>0$ and that $\widetilde{\mathcal{C}}_\rest(100)<\infty$.

We remark that in order to obtain Theorem \ref{bcttheorem} it is enough to use the $d$-linear Kakeya theorem (Theorem \ref{guthkak}) away from the endpoint; see \cite{BCT}.

\subsubsection*{The proof of Proposition \ref{KRimpliesR}}
Our approach is a minor reworking of an argument in \cite{BCT}, which in turn is multilinearisation of the key aspects of the linear analysis of Bourgain \cite{Bo}; see \cite{TVV} for a similar argument in a bilinear setting. It is helpful to draw parallels with the more elementary proof of Proposition \ref{kakkakkak}. We begin by localising the $L^{2/(d-1)}(B(0,R))$ norm to balls of radius $R^{1/2}$.

For notational convenience let $\mathcal{C}(R)=\widetilde{\mathcal{C}}_\rest(R)$.
Let $\phi$ be a nonnegative real-valued compactly-supported bump function on $\mathbb{R}^d$ with the property that $\widehat{\phi}$ is nonnegative and bounded below on the unit ball. For each $R\geq 1$ and $x\in\mathbb{R}^d$ let $\phi_{R^{1/2}}^x(\xi)=e^{-ix\cdot\xi}R^{d/2}\phi(R^{1/2}\xi)$, so that $\widehat{\phi_{R^{1/2}}^x}=\widehat{\phi}(R^{-1/2}(x-\cdot))$ is nonnegative and bounded below on $B(x,R^{1/2})$ uniformly in $x$. From the modulation-invariance of the inequality \eqref{enoughconst} and the definition of $\mathcal{C}$ above, we have
\begin{equation}
\Bigl\|\prod_{j=1}^d\widehat{\phi_{R^{1/2}}^x}\widehat{f}_j\Bigl\|_{L^{\frac{2}{d-1}}(B(x,R^{1/2}))}\leq \mathcal{C}(R^{1/2}) R^{-\frac{d}{2}}\prod_{j=1}^d\|\phi_{R^{1/2}}^x*f_j\|_2
\end{equation}
uniformly in $x$.\footnote{On a minor technical note, this is where the choice of constant $c$ in the definition of $A_j(R)$ is relevant. More specifically, $c$ should be chosen such that the bump function $\phi$ satisfies $\supp (\phi_{R^{1/2}}^x*f_j)\subseteq A_j(R^{1/2})$.} Integrating this over $x\in B(0,R)$ we obtain
\begin{equation}\label{49}
\Bigl\|\prod_{j=1}^{d}\widehat{f}_{j}\Bigl\|_{L^{\frac{2}{d-1}}(B(0,R))}
\lesssim \mathcal{C}(R^{1/2})\Bigl(
R^{-d/2}\int_{B(0,R)}\Bigl(\prod_{j=1}^{d}
\|\phi_{R^{1/2}}^x*f_{j}\|_{L^2({\bf R}^d)}^{2}\Bigr)^{\frac{1}{d-1}}dx\Bigr)^{\frac{d-1}{2}}.
\end{equation}
Now for each $1\leq j\leq d$ we cover $A_{j}(R)$ by a boundedly overlapping
collection of discs $\{\rho_{j}\}$ of diameter $R^{-1/2}$, and
set $f_{j,\rho_j}:=\chi_{\rho_{j}}f_{j}$.
Since (for each $j$) the supports
of the functions $\phi_{R^{1/2}}^x*f_{j,\rho_j}$ have bounded overlap, it follows using Plancherel's theorem that
\begin{equation}\label{50}
\Bigl\|\prod_{j=1}^{d}\widehat{f}_{j}\Bigl\|_{L^{\frac{2}{d-1}}(B(0,R))}
\lesssim \mathcal{C}(R^{1/2})\Bigl(
R^{-d/2}\int_{B(0,R)}\Bigl(\prod_{j=1}^{d}
\sum_{\rho_{j}}\|\widehat{\phi_{R^{1/2}}^{x}}\widehat{f}_{j,\rho_j}\|_{L^2({\bf R}^d)}^{2}
\Bigr)^{\frac{1}{d-1}}
dx\Bigr)^{\frac{d-1}{2}}.
\end{equation}
Since the function $\widehat{\phi_{R^{1/2}}^{x}}$ is rapidly decreasing away from
$B(x,R^{1/2})$, in order to prove the proposition it is enough to show that
\begin{equation}\label{51}
\Bigl(
R^{-d/2}\int_{B(0,R)}\Bigl(\prod_{j=1}^{d}
\sum_{\rho_{j}}\|\widehat{f_{j,\rho_j}}\|_{L^{2}(B(x,R^{1/2}))}^{2}
\Bigr)^{\frac{1}{d-1}}dx\Bigr)^{\frac{d-1}{2}} \lesssim \mathcal{C}_{\kak}(R^{-1/2})R^{-\frac{d}{2}}\prod_{j=1}^d\|f_j\|_2.
\end{equation}
The point here is that the portions of $\widehat{f}_{j,\rho_j}$ on translates of $B(x,R^{1/2})$ in \eqref{50} can be handled by the modulation-invariance of the estimate \eqref{51}.

For each $\rho_j$ let $\psi_{\rho_j}$ be a Schwartz
function which is comparable
to $1$ on $\rho_j$ and whose compactly-supported Fourier transform satisfies
$$
|\widehat{\psi_{\rho_j}}(x+y)|\lesssim R^{-(d+1)/2}\chi_{\rho_{j}^*}(x)
$$
for all $x,y \in \R^d$ with $|y| \leq R^{1/2}$, where $\rho_{j}^*$ denotes an $O(R)\times
O(R^{1/2})\times \cdots\times O(R^{1/2})$-tube, centred at the
origin, and with long side pointing in the direction normal to the
disc $\rho_{j}$.
If we define
$\tilde{f}_{j,\rho_j}:=f_{j,\rho_j}/\psi_{\rho_j}$, then $f_{j,\rho_j}$
and $\tilde{f}_{j,\rho_j}$ are pointwise comparable, and furthermore
by Jensen's inequality,
$$|\widehat{f}_{j,\rho_j}(x+y)|^{2}=|\widehat{\tilde{f}}_{j,\rho_{j}}*
\widehat{\psi}_{\rho_j}(x+y)|^{2}\lesssim
R^{-(d+1)/2}|\widehat{\tilde{f}}_{j,\rho_{j}}|^{2}*\chi_{\rho_{j}^*}(x)
$$
whenever $x \in \R^d$ and $|y| \leq R^{1/2}$.  Integrating this in $y$ we conclude
$$
\|\widehat{f}_{j,\rho_j}\|_{L^{2}(B(x,R^{1/2}))}^{2}\lesssim
R^{-1/2}|\widehat{\tilde{f}}_{j,\rho_{j}}|^{2}*\chi_{\rho_{j}^*}(x).
$$
Combining this with \eqref{guthest3} gives
\begin{eqnarray*}
\begin{aligned}
\Bigl(
R^{-d/2}&\int_{B(0,R)}\Bigl(\prod_{j=1}^{d}
\sum_{\rho_{j}}\|\widehat{f_{j,\rho_j}}\|_{L^{2}(B(x,R^{1/2}))}^{2}
\Bigr)^{\frac{1}{d-1}}dx\Bigr)^{\frac{d-1}{2}}\\
&\lesssim
\Bigl(
R^{-d/2}\int_{B(0,R)}\Bigl(\prod_{j=1}^{d}
\sum_{\rho_{j}}R^{-1/2}|\widehat{\tilde{f}}_{j,\rho_{j}}|^{2}*\chi_{\rho_{j}^*}(x)
\Bigr)^{\frac{1}{d-1}}dx\Bigr)^{\frac{d-1}{2}}\\
&\lesssim \mathcal{C}_\kak(R^{-1/2})R^{-d/2}\prod_{j=1}^{d}
\Bigl(\sum_{\rho_{j}}\|\tilde{f}_{j,\rho_{j}}\|_{L^2(A^R_j)}^{2}\Bigr)^{1/2}\\
&\lesssim \mathcal{C}_\kak(R^{-1/2})R^{-d/2}\prod_{j=1}^{d}\|f_{j}\|_{L^2(A^R_j)}.
\end{aligned}
\end{eqnarray*}
In the last two lines we have used Plancherel's theorem,
disjointness, and the pointwise comparability of
$\tilde{f}_{j,\rho_j}$ and $f_{j,\rho_j}$. This completes the
proof of \eqref{51} and thus Proposition \ref{KRimpliesR}.

\subsection{From multilinear to linear: pointers to applications}\label{fourfour}
Recently Bourgain and Guth \cite{BG} developed a method by which $d$-linear restriction inequalities (in particular, Theorem \ref{bcttheorem}) may be used to make new progress on the linear restriction conjecture. We have already discussed a simple version of this method in Section \ref{twothree}. The idea is to find a suitable ``multilinear" analogue of Proposition \ref{bilBG}, taking the form
$$
|\widehat{gd\sigma}(\xi)|^q\lesssim K^{\gamma}\sum_{S_{\alpha_1},\hdots, S_{\alpha_d}\;\trans}|\widehat{g_{\alpha_1}d\sigma}(\xi)\cdots\widehat{g_{\alpha_d}d\sigma}(\xi)|^{\frac{q}{d}}+\cdot\cdot\cdot$$
for some $\gamma>0$.
This inequality, and its subsequent analysis, require several additional ingredients in order to bootstrap away all but the first term. Unfortunately there is some loss in doing this which places some limit on the resulting progress; see \cite{BG}.

For further applications of Theorem \ref{bcttheorem} and the Bourgain--Guth method, see \cite{BourgainMoment}, \cite{BourgainSch}, \cite{BourgainLap}, \cite{Temur} and \cite{LV}.
\section{Transversal multilinear harmonic analysis: a bigger picture}\label{five}

The aim of this section is to begin to investigate transversal multilinear analogues of problems in harmonic analysis related to curvature in a somewhat broader setting. As Stein pointed out in his 1986 ICM address \cite{SteinICM}, the analytical exploitation of underlying geometric properties such as nondegenerate curvature is intimately connected with the estimation of \emph{oscillatory integrals}. Looking at oscillatory integrals in some generality would thus seem to be a sensible place to begin.
\subsection{Oscillatory integrals}
To a smooth phase function $\Phi:\mathbb{R}^{d'}\times\mathbb{R}^d\rightarrow\mathbb{R}$ we may associate an operator
\begin{equation}\label{oscintdef}
T_\lambda f(\xi)=\int_{\mathbb{R}^{d'}} e^{i\lambda\Phi(x,\xi)}\psi(x,\xi)f(x)dx.
\end{equation}
Here $d'\leq d$ and $\psi$ is a smooth cutoff function on $\mathbb{R}^{d'}\times\mathbb{R}^d$.
Such operators are referred to as \emph{oscillatory integrals of H\"ormander type} or \emph{oscillatory integrals of the second kind}.

It is natural to look for $L^p-L^q$ control of $T_\lambda$ in terms of the large parameter $\lambda$ under nondegeneracy conditions on the phase $\Phi$.
The starting point in this well-studied problem is the classical H\"ormander theorem.
\begin{theorem}[H\"ormander]\label{Hormander}
If $d'=d$ and
\begin{equation}\label{hess}
\det\Bigl(\frac{\partial^2\Phi(x,\xi)}{\partial x_i\xi_j}\Bigr)\not=0 \;\;\;\;\;\;\bigl(\mbox{i.e.}\;\;\det\hess(\Phi)\not=0\bigr)$$ on $\supp(\Phi)$ then $$\|T_\lambda f\|^2_{L^2(\mathbb{R}^d)}\lesssim\lambda^{-d}
\|f\|^2_{L^{2}(\mathbb{R}^d)}.
\end{equation}
\end{theorem}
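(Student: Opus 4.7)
The plan is to use the $TT^*$ method, reducing matters to establishing a pointwise Schwartz decay estimate on the kernel of $T_\lambda T_\lambda^*$ via non-stationary phase, and then concluding by Schur's test.

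\textbf{Step 1: Set up $TT^*$.} Since $\|T_\lambda\|_{L^2\to L^2}^2 = \|T_\lambda T_\lambda^*\|_{L^2\to L^2}$, it suffices to bound the operator norm of $T_\lambda T_\lambda^*$ by $\lambda^{-d}$. A direct computation gives $T_\lambda T_\lambda^* g(\xi) = \int K_\lambda(\xi,\eta) g(\eta)\,d\eta$ with kernel
\begin{equation*}
K_\lambda(\xi,\eta) = \int_{\mathbb{R}^d} e^{i\lambda(\Phi(x,\xi)-\Phi(x,\eta))}\psi(x,\xi)\overline{\psi(x,\eta)}\,dx.
\end{equation*}

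\textbf{Step 2: Exploit the mixed-Hessian hypothesis.} By the fundamental theorem of calculus,
\begin{equation*}
\nabla_x\bigl[\Phi(x,\xi)-\Phi(x,\eta)\bigr] = \Bigl(\int_0^1 \hess(\Phi)(x,\eta+t(\xi-\eta))\,dt\Bigr)(\xi-\eta),
\end{equation*}
where $\hess$ denotes the mixed Hessian $\partial^2\Phi/\partial x_i\partial\xi_j$. Since $\det\hess(\Phi)\neq 0$ on the (compact) support of $\psi$, after possibly shrinking $\supp(\psi)$ by a smooth partition of unity one obtains the lower bound
\begin{equation*}
\bigl|\nabla_x[\Phi(x,\xi)-\Phi(x,\eta)]\bigr|\gtrsim |\xi-\eta|
\end{equation*}
uniformly in $(x,\xi,\eta)$ on the support of the integrand.

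\textbf{Step 3: Non-stationary phase.} The phase $\lambda(\Phi(x,\xi)-\Phi(x,\eta))$ therefore has $x$-gradient of size $\gtrsim \lambda|\xi-\eta|$. Repeated integration by parts in $x$ against the first-order differential operator $L = \frac{\nabla_x\Phi(x,\xi)-\nabla_x\Phi(x,\eta)}{i\lambda|\nabla_x\Phi(x,\xi)-\nabla_x\Phi(x,\eta)|^2}\cdot\nabla_x$, whose transpose fixes the exponential, yields for every $N\geq 0$ a bound of the form
\begin{equation*}
|K_\lambda(\xi,\eta)|\lesssim_N (1+\lambda|\xi-\eta|)^{-N},
\end{equation*}
on the support (which confines $\xi$ and $\eta$ to a fixed compact set). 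The main technical obstacle is controlling the amplitude derivatives produced by the integration by parts; compactness of the support and smoothness of $\psi$ keep these harmless.

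\textbf{Step 4: Schur's test.} Taking $N>d$ and integrating the kernel bound gives
\begin{equation*}
\sup_\xi \int |K_\lambda(\xi,\eta)|\,d\eta\;+\;\sup_\eta \int |K_\lambda(\xi,\eta)|\,d\xi\;\lesssim\; \lambda^{-d},
\end{equation*}
so Schur's test delivers $\|T_\lambda T_\lambda^*\|_{L^2\to L^2}\lesssim \lambda^{-d}$, whence $\|T_\lambda f\|_{L^2}^2\lesssim\lambda^{-d}\|f\|_{L^2}^2$, as required. The argument for large $\lambda$ is all that is needed; small $\lambda$ is trivial by the compact support of $\psi$ and Cauchy--Schwarz.
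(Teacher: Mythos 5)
Your proof is correct, and it is the standard $TT^*$ / non-stationary phase / Schur test argument for H\"ormander's theorem. Note, however, that this survey article does not actually prove Theorem \ref{Hormander}: it is stated as classical background with no argument given, and the reader is implicitly referred to the standard sources \cite{Hormanderconjecture} and \cite{Stein}, where precisely the argument you outline appears. The only gesture toward a proof in the paper is a later footnote observing that when the phase is a nondegenerate bilinear form $\Phi(x,\xi)=\langle x,L\xi\rangle$, a scaling argument reduces the estimate directly to Plancherel's theorem; your argument is the genuine smooth-phase version of that observation, with the role of Plancherel replaced by the almost-orthogonality captured by the off-diagonal decay of $K_\lambda$. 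Two small points worth being explicit about if you wanted a complete write-up. First, in Step 2 the nondegeneracy of $\int_0^1 \hess(\Phi)(x,\eta+t(\xi-\eta))\,dt$ does not follow from pointwise nondegeneracy of $\hess(\Phi)$ alone (an average of invertible matrices need not be invertible); as you indicate, one must first localise so that $\hess(\Phi)$ stays within a small neighbourhood of a fixed invertible matrix $A$, after which the averaged matrix is also near $A$ and hence invertible, giving the uniform lower bound $\bigl|\nabla_x[\Phi(x,\xi)-\Phi(x,\eta)]\bigr|\gtrsim|\xi-\eta|$. Second, in Step 3, when iterating $L^{T}$ the derivatives landing on the numerator $\nabla_x\Phi(x,\xi)-\nabla_x\Phi(x,\eta)$ (and its higher analogues) should be bounded by $O(|\xi-\eta|)$ rather than merely $O(1)$, again via the fundamental theorem of calculus applied to $\nabla_x^m\Phi(x,\cdot)$; this is what keeps each application of $L^T$ costing a full factor of $(\lambda|\xi-\eta|)^{-1}$ and yields the claimed $(1+\lambda|\xi-\eta|)^{-N}$ decay.
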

H\"ormander's theorem is the point of departure for much of the general (linear) theory of oscillatory integrals. In the situation where $d'=d$ we refer the reader to \cite{Seeger} for a survey of a variety of results relating to $L^2$ bounds in the situation where the condition \eqref{hess} fails and higher order nondegeneracy conditions are placed on the phase. In such situations an $L^p$ theory also becomes relevant; see for example \cite{GS2} or \cite{BS}.
When $d'<d$, and in particular when $d'=d-1$, the class of oscillatory integral operators of the form \eqref{oscintdef} contains the Fourier extension operators discussed in Section 2. As was raised by H\"ormander in \cite{Hormanderconjecture}, one might expect the extension estimates described in Section 2 to be stable under smooth perturbations of the phase functions $(x,\xi)\mapsto\xi\cdot\Sigma(x)$ in \eqref{extn}. While this is true for $d=2$ (\cite{Hormanderconjecture}) and up to the Stein--Tomas exponent (\cite{beijing}), Bourgain showed in \cite{BoHo} that it is in general false.

Our goal here is to move from a linear setting to a multilinear setting, replacing curvature hypotheses with appropriate ``transversality" hypotheses. It is natural to begin by looking for a suitable general setting in which to place the $d$-linear restriction estimates from Section \ref{MultSect}.
By contrast with the linear situation, the $d$-linear restriction theorem (Theorem \eqref{bcttheorem}) does turn out to be quite stable under smooth perturbations of the phase functions $(x,\xi)\mapsto \xi\cdot\Sigma_j(x)$. The appropriate transversality condition on the general phase functions $\Phi_1,\hdots,\Phi_d:\mathbb{R}^d\times\mathbb{R}^{d-1}\rightarrow\mathbb{R}$ amounts to the assertion that the kernels of the mappings $d_\xi d_x\Phi_1,\hdots,d_\xi d_x\Phi_d$ span $\mathbb{R}^d$ at every point. In order to be more precise let
$$
X(\Phi_j):=\bigwedge_{\ell=1}^{d-1}\frac{\partial}{\partial
x_{\ell}}\nabla_{\xi}\Phi_j
$$
for each $1\leq j\leq d$; by (Hodge) duality we may interpret each $X(\Phi_j)$ as an $\mathbb{R}^d$-valued function on $\mathbb{R}^d\times\mathbb{R}^{d-1}$.
In the extension case where $\Phi_j(x,\xi)=\xi\cdot\Sigma_j(x)$, observe that $X(\Phi_j)(x,\xi)$ is simply a vector normal to the surface $S_j$ at the point $\Sigma_j(x)$. A natural transversality condition to impose on the general phases $\Phi_1,\hdots,\Phi_d$ is thus
\begin{equation}\label{difftrans}
\det\left(X(\Phi_{1})(x^{(1)},\xi),\hdots,X(\Phi_{d})(x^{(d)},\xi)\right)>\nu
\end{equation}
for all
$(x^{(1)},\xi)\in\supp(\psi_{1}),\hdots,(x^{(d)},\xi)\in\supp(\psi_{d})$. The oscillatory integral analogue of Theorem \ref{bcttheorem} is the following.
\begin{theorem}[B--Carbery--Tao \cite{BCT}]\label{BCToscthm}
If \eqref{difftrans} holds then for each
$\varepsilon>0$
there is a constant $C_\varepsilon>0$ for which
$$
\Bigl\|\prod_{j=1}^{d}T_{j,\lambda}f_{j}\Bigl\|_{L^{\frac{2}{d-1}}(\mathbb{R}^{d})} \leq
C_{\varepsilon}\lambda^{-d+\varepsilon}\prod_{j=1}^{d}\|f_{j}\|_{L^{2}(\mathbb{R}^{d-1})}
$$
for all $f_{1},\hdots,f_{d}\in L^{p}(\mathbb{R}^{d-1})$ and $\lambda>0$.
\end{theorem}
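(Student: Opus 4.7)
The plan is to prove Theorem \ref{BCToscthm} by an induction-on-scales argument that directly adapts the proof of Theorem \ref{bcttheoremrevisted} via Proposition \ref{KRimpliesR}. The guiding principle is that on output balls of radius $\lambda^{-1/2}$, each $T_{j,\lambda}$ behaves locally like an extension operator whose associated hypersurface has normal $X(\Phi_j)$ at the centre of the ball, so the transversality hypothesis \eqref{difftrans} becomes precisely what is needed for the $d$-linear Kakeya inequality (Theorem \ref{guthkak}) to apply at the tube scale $\lambda^{-1/2}$.

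Concretely, I would define $\mathcal{C}_{\osc}(\lambda)$ to be the smallest constant $C$ for which
$$\Bigl\|\prod_{j=1}^d T_{j,\lambda}f_j\Bigr\|_{L^{2/(d-1)}(\mathbb{R}^d)} \le C\lambda^{-d}\prod_{j=1}^d\|f_j\|_{L^2(\mathbb{R}^{d-1})}$$
holds uniformly over all $f_j$ and all $d$-tuples of smooth phases $\Phi_1,\dots,\Phi_d$ and cutoffs $\psi_1,\dots,\psi_d$ belonging to a fixed uniform class (prescribed compact supports, a fixed transversality constant $\nu$, and a fixed upper bound on the $C^N$-norms of the $\Phi_j$ and $\psi_j$). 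Crude $L^1\to L^\infty$ bounds render $\mathcal{C}_{\osc}(\lambda)$ a priori finite for each $\lambda$, and the goal is to prove the recursive inequality
$$\mathcal{C}_{\osc}(\lambda) \le c\,\mathcal{C}_{\osc}(\lambda^{1/2})\,\mathcal{C}_{\kak}(\lambda^{-1/2}),$$
with $c$ independent of $\lambda$. Combined with $\mathcal{C}_{\kak}(\delta)\lesssim 1$ from Theorem \ref{guthkak}, iterating this $O(\log\log\lambda)$ times exactly as in \eqref{iterrest} yields $\mathcal{C}_{\osc}(\lambda)=O((\log\lambda)^\kappa)$, comfortably delivering the claimed $\lambda^{-d+\varepsilon}$ bound.

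The proof of the recursive inequality runs in parallel with that of Proposition \ref{KRimpliesR}. I would cover the output region by balls $B$ of radius $\lambda^{-1/2}$ and, on a ball centred at $\xi_B$, perform the anisotropic rescaling $\xi=\xi_B+\lambda^{-1/2}\tilde\xi$. After absorbing the $\tilde\xi$-independent part of the Taylor expansion of $\lambda\Phi_j$ into a unimodular modulation of $f_j$ (which preserves $\|f_j\|_2$), the rescaled operator takes oscillatory-integral form at scale $\lambda^{1/2}$ with new phase
$$\widetilde{\Phi}_j^B(x,\tilde\xi)=\nabla_\xi\Phi_j(x,\xi_B)\cdot\tilde\xi + \tfrac12\lambda^{-1/2}\,\tilde\xi^T D_{\xi\xi}\Phi_j(x,\xi_B)\,\tilde\xi + O(\lambda^{-1}|\tilde\xi|^3),$$
so that the inductive hypothesis $\mathcal{C}_{\osc}(\lambda^{1/2})$ applies on the rescaled ball. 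Exactly as in the passage from \eqref{49} through \eqref{51}, one then decomposes $f_j=\sum_{\rho_j}f_{j,\rho_j}$ over a $\lambda^{-1/2}$-ball partition of $\supp(f_j)$, passes through Plancherel to extract a square-sum over the $\rho_j$, and dominates $|T_{j,\lambda}f_{j,\rho_j}|^2$ pointwise by a convolution of (a mild modification of) $|f_{j,\rho_j}|^2$ with the indicator of a tube of length $1$ and cross-section $\lambda^{-1/2}$, oriented along $X(\Phi_j)(x_{\rho_j},\xi_B)$. The hypothesis \eqref{difftrans} ensures that these tubes form transversal families over the $\rho_j$'s, and the $d$-linear Kakeya bound \eqref{guthest3} with $\delta=\lambda^{-1/2}$ absorbs the sum at the cost of a factor $\mathcal{C}_{\kak}(\lambda^{-1/2})$.

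The \textbf{main obstacle} is to verify that the rescaled phases $\widetilde{\Phi}_j^B$ belong to a \emph{uniform} class on which the inductive constant $\mathcal{C}_{\osc}(\lambda^{1/2})$ is meaningful, independently of $\xi_B$ and of the original phases; otherwise the induction cannot close. Since $d_{\tilde\xi}d_x\widetilde{\Phi}_j^B(x,\tilde\xi) = d_\xi d_x\Phi_j(x,\xi_B) + O(\lambda^{-1/2})$, one has $X(\widetilde{\Phi}_j^B) = X(\Phi_j)(\cdot,\xi_B) + O(\lambda^{-1/2})$, so the transversality determinant in \eqref{difftrans} is preserved, say with constant $\nu/2$, once $\lambda$ is large enough; the higher-order Taylor remainders and the associated rescaled cutoffs are controlled uniformly in $\xi_B$ by the $C^N$-norms of the $\Phi_j$ and $\psi_j$. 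Once this uniform stability of the class of phases under rescaling is established, the recursive inequality follows exactly as in Proposition \ref{KRimpliesR} by summing the per-ball bounds, and iteration via Theorem \ref{guthkak} completes the proof of Theorem \ref{BCToscthm}.
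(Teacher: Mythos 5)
There is a genuine gap in the proposal: the wave packets that arise in the $\lambda^{-1/2}$-cap decomposition for a H\"ormander-type operator $T_{j,\lambda}$ are concentrated on $\lambda^{-1/2}$-neighbourhoods of \emph{curves} $\{\xi : \nabla_x\Phi_j(a_{\rho_j},\xi)=\omega\}$ of length $O(1)$, not on straight tubes. Your pointwise domination of $|T_{j,\lambda}f_{j,\rho_j}|^2$ by a convolution with the indicator of a length-$1$ straight tube ``oriented along $X(\Phi_j)(x_{\rho_j},\xi_B)$'' is false: over the full output region (of diameter $O(1)$), the curve deviates from its tangent line at $\xi_B$ by an amount of order $1$, vastly exceeding the tube width $\lambda^{-1/2}$. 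A straight-tube approximation is only valid over a segment of length $O(\lambda^{-1/4})$, since only then is the deviation $O(\lambda^{-1/2})$. Consequently the Kakeya-type input in the recursion must be for \emph{curvy} tubes, not straight ones, and you cannot invoke Theorem \ref{guthkak} directly at scale $\lambda^{-1/2}$. The per-ball rescaling you describe does correctly make the rescaled phases nearly linear (and that is indeed needed to close the $\mathcal{C}_{\osc}$ induction), but it does not straighten the global tube geometry on which the Kakeya step acts.

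This is exactly why the paper introduces the intermediate ``curvy Kakeya'' constant $\mathcal{C}_{\ckak}(\delta)$, associated with $\delta$-neighbourhoods of curve segments $\{\xi:|\nabla_x\Phi_j(a,\xi)-\omega|\le\delta\}$, and proceeds in two recursive stages. First, Proposition \ref{ckakrec} gives $\mathcal{C}_{\ckak}(\delta)\le c\,\mathcal{C}_{\kak}(\delta^{1/2})\,\mathcal{C}_{\ckak}(\delta^{1/2})$; the key is that on a cube of side $\delta^{1/2}$ a curvy $\delta$-tube is contained in a straight $\delta\times\cdots\times\delta\times\delta^{1/2}$-tube, so the straight Kakeya theorem enters only at that intermediate scale, and iteration gives $\mathcal{C}_{\ckak}(\delta)=O(\log(1/\delta))^{\kappa}$. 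Second, Proposition \ref{oscosckak} gives a recursion for $\mathcal{C}_{\osc}$ in terms of $\mathcal{C}_{\ckak}$ (not $\mathcal{C}_{\kak}$), of the form $\mathcal{C}_{\osc}(\lambda)\le c\lambda^{\varepsilon}\mathcal{C}_{\osc}(\lambda^{1/2})\mathcal{C}_{\ckak}(\lambda^{-1/2+2\varepsilon/d(d+1)})+C_{\varepsilon,M}\lambda^{-M}$; the extra $\lambda^{\varepsilon}$ loss and the rapidly decaying error term are forced precisely because wave packets cannot be perfectly localised to (even curvy) tubes, a technical difficulty your clean recursion $\mathcal{C}_{\osc}(\lambda)\le c\,\mathcal{C}_{\osc}(\lambda^{1/2})\mathcal{C}_{\kak}(\lambda^{-1/2})$ silently ignores. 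Without the curvy Kakeya intermediary your Kakeya step simply does not match the geometry of the wave packets, so the argument cannot close.
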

Just as in the extension case, Theorem \ref{BCToscthm} may be seen as a consequence of a certain recursive inequality involving a corresponding Kakeya-type bound, analogous to \eqref{restrec}. This Kakeya-type bound amounts to an analogue of Theorem \ref{guthkak} where the straight tubes $T_j$ are replaced by $\delta$-neighbourhoods of segments of smooth curves (see \cite{Wisewell} for a treatment of ``curvy Kakeya" problems in the classical linear setting). More specifically, for each $1\leq j\leq d$, let $\mathbb{T}_j$ denote a finite collection of subsets of $\mathbb{R}^d$ of the form
$$
\{\xi\in\mathbb{R}^d:|\nabla_x\Phi_j(a,\xi)-\omega|\leq \delta,\;(a,\xi)\in\supp(\psi_j)\},$$
where $a, \omega\in\mathbb{R}^{d-1}$. Let us suppose that \eqref{difftrans} holds and denote by $\mathcal{C}_\ckak(\delta)$ the smallest constant $C$ in the inequality
$$
\Bigl\|\prod_{j=1}^d\sum_{T_j\in\mathbb{T}_j}\chi_{T_j}\Bigr\|_{L^{\frac{1}{d-1}}(\mathbb{R}^d)}\leq C\delta^d\prod_{j=1}^d\#\mathbb{T}_j
$$
over all such families of tubes $\mathbb{T}_1,\hdots, \mathbb{T}_d$. Similarly we define $\mathcal{C}_{\osc}(\lambda)$ to be the smallest constant $C$ in the inequality
$$
\Bigl\|\prod_{j=1}^{d}T_{j,\lambda}f_{j}\Bigl\|_{L^{\frac{2}{d-1}}(\mathbb{R}^{d})} \leq
C\lambda^{-d}\prod_{j=1}^{d}\|f_{j}\|_{L^{2}(\mathbb{R}^{d-1})}
$$
over all smooth phase functions $\Phi_1,\hdots,\Phi_d$ satisfying \eqref{difftrans}.\footnote{Naturally there should also be some quantitative control on the smoothness of the phases $\Phi_j$ in the definitions of $\mathcal{C}_\ckak$ and $\mathcal{C}_\osc$. Rather that discuss this technical aspect here, we refer the reader to \cite{BCT} for the details and the subsequent uniform version of Theorem \ref{BCToscthm}.}
Of course Theorem \ref{BCToscthm} may be restated as $\mathcal{C}_\osc(\lambda)=O(\lambda^\varepsilon)$ for all $\varepsilon>0$. We establish this via two consecutive recursive inequalities. The first of these concerns bounds on $\mathcal{C}_\ckak(\delta)$.
\begin{prop}[\cite{BCT} revisited]\label{ckakrec} There exists a constant $c$, independent of $\delta$ such that
\begin{equation}\label{ckakrecursive}
\mathcal{C}_\ckak(\delta)\leq c\mathcal{C}_\kak(\delta^{1/2})\mathcal{C}_{\ckak}(\delta^{1/2})
\end{equation}
for all $0<\delta\ll 1$.
\end{prop}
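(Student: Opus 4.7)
The plan is to mimic the proof of Proposition \ref{kakkakkak}, but use the fact that at an intermediate scale $\delta^{1/2}$, curvy tubes look essentially like straight tubes. I would tile $\mathbb{R}^d$ by a grid of cubes $\{Q\}$ of diameter $\delta^{1/2}$, and decompose
\begin{equation*}
\Bigl\|\prod_{j=1}^d\sum_{T_j\in\mathbb{T}_j}\chi_{T_j}\Bigr\|_{L^{\frac{1}{d-1}}(\mathbb{R}^d)}^{\frac{1}{d-1}}
=\sum_Q\Bigl\|\prod_{j=1}^d\sum_{T_j\in\mathbb{T}_j^Q}\chi_{T_j\cap Q}\Bigr\|_{L^{\frac{1}{d-1}}(\mathbb{R}^d)}^{\frac{1}{d-1}},
\end{equation*}
where $\mathbb{T}_j^Q=\{T_j\in\mathbb{T}_j:T_j\cap Q\neq\emptyset\}$.

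Next I would use smoothness of the phases $\Phi_j$ to show that, over a cube of diameter $\delta^{1/2}$, each curvy tube $T_j\cap Q$ fits inside a \emph{straight} tube of long side $\delta^{1/2}$ and $d-1$ short sides of length $O(\delta)$ (the curvature defect along length $\delta^{1/2}$ is $O(\delta)$ by a Taylor expansion of the defining map $\xi\mapsto\nabla_x\Phi_j(a,\xi)$). The transversality of the resulting straight tube families within $Q$ is inherited from \eqref{difftrans} applied at a base point of $Q$. After an affine change of variables sending $Q$ to the unit cube, the definition of $\mathcal{C}_\kak$ at scale $\delta/\delta^{1/2}=\delta^{1/2}$ yields
\begin{equation*}
\Bigl\|\prod_{j=1}^d\sum_{T_j\in\mathbb{T}_j^Q}\chi_{T_j\cap Q}\Bigr\|_{L^{\frac{1}{d-1}}(\mathbb{R}^d)}^{\frac{1}{d-1}}
\leq \delta^d\,\mathcal{C}_\kak(\delta^{1/2})\prod_{j=1}^d(\#\mathbb{T}_j^Q)^{\frac{1}{d-1}}.
\end{equation*}

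To handle the sum over $Q$, I would thicken each curvy tube: for each $T_j=\{\xi:|\nabla_x\Phi_j(a_j,\xi)-\omega_j|\leq\delta\}$, set $\widetilde{T}_j=\{\xi:|\nabla_x\Phi_j(a_j,\xi)-\omega_j|\leq c_1\delta^{1/2}\}$ with $c_1$ large enough that $T_j\cap Q\neq\emptyset$ implies $Q\subseteq\widetilde{T}_j$. Then $\#\mathbb{T}_j^Q\leq\sum_{T_j\in\mathbb{T}_j}\chi_{\widetilde{T}_j}(x_Q)$ for every $x_Q\in Q$; averaging in $x_Q\in Q$ as in the proof of Proposition \ref{kakkakkak} yields
\begin{equation*}
\sum_Q\prod_{j=1}^d(\#\mathbb{T}_j^Q)^{\frac{1}{d-1}}\lesssim (\delta^{1/2})^{-d}\int_{\mathbb{R}^d}\prod_{j=1}^d\Bigl(\sum_{T_j\in\mathbb{T}_j}\chi_{\widetilde{T}_j}\Bigr)^{\frac{1}{d-1}}.
\end{equation*}
Since $\{\widetilde{T}_j\}$ is a family of curvy tubes at scale $\delta^{1/2}$ associated to the same phases, the definition of $\mathcal{C}_\ckak(\delta^{1/2})$ bounds the right side by $\mathcal{C}_\ckak(\delta^{1/2})\prod_j(\#\mathbb{T}_j)^{\frac{1}{d-1}}$. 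Combining everything gives the claimed inequality.

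The main obstacle is the geometric comparison in the second step: one must verify quantitatively, using only smoothness and the transversality condition \eqref{difftrans}, that a curvy tube meeting $Q$ is trapped in a straight tube of short side $O(\delta)$, and that the resulting straight tubes across different $j$ satisfy a uniform transversality condition that is legitimately compatible with the definition of $\mathcal{C}_\kak$. The other technical nuisance is that the thickened curvy tubes $\widetilde{T}_j$ may not strictly match the admissible form in the definition of $\mathcal{C}_\ckak$, but this can be absorbed by covering each $\widetilde{T}_j$ by $O(1)$ genuine $\delta^{1/2}$-curvy tubes, exactly as in the footnote accompanying \eqref{kakdec4}.
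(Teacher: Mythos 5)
Your proposal is correct and follows essentially the same route the paper indicates: the paper states that the proof is "simply a reprise of the proof of Proposition \ref{kakkakkak}" with $\delta'\sim\delta^{1/2}$, the key geometric input being exactly the observation you make that $T_j\cap Q$ lies in a straight tube of dimensions $O(\delta)\times\cdots\times O(\delta)\times O(\delta^{1/2})$, so that $\mathcal{C}_\kak(\delta^{1/2})$ applies inside each cube while $\mathcal{C}_\ckak(\delta^{1/2})$ handles the thickened curvy tubes at the coarser scale. Your handling of the two technical points (local transversality of the straight tubes, and the covering of the non-admissible $\widetilde{T}_j$ by $O(1)$ admissible $\delta^{1/2}$-curvy tubes) matches the treatment in Proposition \ref{kakkakkak} and its footnote.
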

The proof of Proposition \ref{ckakrec} is simply a reprise of the proof of Proposition \ref{kakkakkak}. The key point is that if $Q$ is a cube of side $\delta^{1/2}$, then $T_j\cap Q$ is contained in a straight tube of dimensions $O(\delta)\times\cdots\times O(\delta)\times O(\delta^{1/2})$, and so \eqref{kakdec1} continues to hold in this curvy setting provided $\delta'\sim \delta^{1/2}$.

Now, since Guth's endpoint $d$-linear Kakeya inequality amounts to $\mathcal{C}_\kak(\delta)\leq C$ for some constant $C$ independent of $\delta$, we have $\mathcal{C}_\ckak(\delta)\leq cC\mathcal{C}_{\ckak}(\delta^{1/2})$, which on iterating $O(\log\log(1/\delta))$ times (as in \eqref{iterrest}) gives $$\mathcal{C}_\ckak(\delta)=O(\log(1/\delta))^\kappa$$ for some $\kappa$.

The second recursive inequality is analogous to Proposition \ref{KRimpliesR} and allows us to transfer the above bound on $\mathcal{C}_\ckak(\delta)$ to a bound on $\mathcal{C}_\osc(\lambda)$. This essentially tells us that $\mathcal{C}_\osc(\lambda)\leq c\mathcal{C}_\osc(\lambda^{1/2})\mathcal{C}_\ckak(\lambda^{-1/2}).$ The proof of this inequality, which is implicit in \cite{BCT} proceeds via a wavepacket decomposition in the spirit of \cite{Bo}. For technical reasons, relating to the fact that wavepackets cannot be perfectly localised to tubes, the actual inequality that follows is slightly more complicated.
\begin{prop}[\cite{BCT} revisited]\label{oscosckak}
For every $\varepsilon,M>0$ there exists constants $C_{\varepsilon,M}, c>0$ independent of $\lambda$ such that
$$\mathcal{C}_\osc(\lambda)\leq c\lambda^\varepsilon\mathcal{C}_\osc(\lambda^{1/2})\mathcal{C}_\ckak(\lambda^{-1/2+\frac{2\varepsilon}{d(d+1)}})+C_{\varepsilon,M}\lambda^{-M}.$$
\end{prop}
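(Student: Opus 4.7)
The plan is to mirror the structure of the proof of Proposition \ref{KRimpliesR}: localise the $L^{2/(d-1)}$ norm to balls of radius $\lambda^{1/2}$, pull out $\mathcal{C}_\osc(\lambda^{1/2})$ from each ball via the inductive hypothesis at the smaller scale, and close by summing over these balls using the curvy Kakeya bound. The essential new feature, compared with Proposition \ref{KRimpliesR}, is that $T_{j,\lambda}$ is not a convolution, so one cannot rely on the convolution with the bumps $\phi_{R^{1/2}}^{x}$ used there; in its place one performs a wavepacket decomposition adapted to the phases $\Phi_j$.

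\textbf{Wavepacket decomposition.} Cover $\mathbb{R}^{d-1}$ by a boundedly-overlapping family of discs $\{\rho\}$ of diameter $\sim\lambda^{-1/2+\sigma}$, where $\sigma:=\varepsilon/(d(d+1))$ is a small slack parameter, and decompose $f_j=\sum_\rho f_{j,\rho}$ via an associated smooth partition of unity. A stationary/non-stationary phase computation applied to the adjoint of $T_{j,\lambda}$ shows that $T_{j,\lambda}f_{j,\rho}$ is essentially concentrated on the curved tube
\[ T_{j,\rho} := \{\xi:\ |\nabla_x\Phi_j(a_\rho,\xi)-\omega_\rho|\le C\lambda^{-1/2+\sigma}\} \]
for appropriate $a_\rho,\omega_\rho\in\mathbb{R}^{d-1}$, with Schwartz-type tails away from $T_{j,\rho}$ measured in $\lambda^{-1/2+\sigma}$-units. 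These are precisely the curved tubes that appear in the definition of $\mathcal{C}_\ckak(\lambda^{-1/2+\sigma})$.

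\textbf{Localisation, induction and Kakeya summation.} Cover $\mathbb{R}^d$ by a boundedly-overlapping family of balls $B_\alpha = B(x_\alpha,\lambda^{1/2})$. After the rescaling $\xi=x_\alpha+\lambda^{-1/2}\eta$, the restrictions of $T_{j,\lambda}f_j$ to $B_\alpha$ become oscillatory integrals of type $T_{j,\lambda^{1/2}}$ with phases still satisfying \eqref{difftrans} with uniform constants, so the definition of $\mathcal{C}_\osc(\lambda^{1/2})$ yields
\[ \Bigl\|\prod_{j=1}^d T_{j,\lambda}f_j\Bigr\|_{L^{2/(d-1)}(B_\alpha)}^{2/(d-1)} \lesssim \mathcal{C}_\osc(\lambda^{1/2})^{2/(d-1)}\lambda^{-d/(d-1)}\prod_{j=1}^d\|(f_j)_{B_\alpha}\|_2^{2/(d-1)}, \]
where $(f_j)_{B_\alpha}$ denotes the effective input to the rescaled operator. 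By $L^2$-orthogonality of the essentially disjointly supported wavepackets together with their concentration on $T_{j,\rho}$, one has $\|(f_j)_{B_\alpha}\|_2^2\lesssim\sum_\rho\|f_{j,\rho}\|_2^2\,\chi_{T_{j,\rho}}(x_\alpha)$ up to rapidly decaying tails. Summing over $\alpha$ and invoking the curved analogue of \eqref{guthest3}, with weights $\|f_{j,\rho}\|_2^2$ on each tube $T_{j,\rho}$, produces the main term
\[ c\lambda^\varepsilon\,\mathcal{C}_\osc(\lambda^{1/2})\,\mathcal{C}_\ckak(\lambda^{-1/2+\sigma})\,\lambda^{-d}\prod_{j=1}^d\|f_j\|_2^2, \]
where the essentially disjoint support property $\sum_\rho\|f_{j,\rho}\|_2^2\lesssim\|f_j\|_2^2$ has been used and $\lambda^\varepsilon$ absorbs the polynomial losses coming from the $\sigma$-slack and overlap counting.

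\textbf{Main obstacle.} The technical heart of the argument is the imperfect localisation of wavepackets: they cannot be concentrated on tubes of width $\lambda^{-1/2}$ exactly, and must instead be handled at the enlarged width $\lambda^{-1/2+\sigma}$ with Schwartz-type decay beyond. The slack $\sigma=\varepsilon/(d(d+1))$, and the attendant loss $\lambda^\varepsilon$, is the precise price one pays for converting this approximate spatial localisation into a usable curved Kakeya bound. The very same tail estimates then contribute the harmless additive error $C_{\varepsilon,M}\lambda^{-M}$ in the statement, since the portion of the wavepacket decomposition dropped at any fixed polynomial scale can be made smaller than any power of $\lambda$. Keeping careful bookkeeping of these Schwartz tails, while extracting only one clean factor of $\mathcal{C}_\ckak(\lambda^{-1/2+\sigma})$ and one of $\mathcal{C}_\osc(\lambda^{1/2})$ on the main term, is the most delicate part of the proof.
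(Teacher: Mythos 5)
Your overall architecture is the right one and matches what the paper indicates: cover $\mathbb{R}^d$ by $\lambda^{1/2}$-balls, apply $\mathcal{C}_\osc(\lambda^{1/2})$ on each after a parabolic rescaling, and then sum over balls via the curvy Kakeya constant, with the slack $\sigma$ and Schwartz tails producing the $\lambda^\varepsilon$ and $\lambda^{-M}$ losses. But there is a genuine gap in the decomposition you propose, and it is precisely at the point where this proof must depart from the proof of Proposition~\ref{KRimpliesR}.

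You decompose $f_j=\sum_\rho f_{j,\rho}$ only spatially, into discs $\rho$ of diameter $\lambda^{-1/2+\sigma}$, and then assert that $T_{j,\lambda}f_{j,\rho}$ is essentially concentrated on a single curved tube $T_{j,\rho}$ indexed by a pair $(a_\rho,\omega_\rho)$. This cannot be right for a general input $f_j\in L^2(\mathbb{R}^{d-1})$: once $f_{j,\rho}$ is spatially localised to a $\lambda^{-1/2+\sigma}$-ball its frequency content is essentially unconstrained, and a stationary phase analysis shows $T_{j,\lambda}f_{j,\rho}(\xi)\approx\widehat{f_{j,\rho}}\bigl(-\lambda\nabla_x\Phi_j(a_\rho,\xi)\bigr)$ up to phase factors -- this is spread over the whole fan of curved tubes $\{\xi:|\nabla_x\Phi_j(a_\rho,\xi)-\omega|\lesssim\lambda^{-1/2+\sigma}\}$ with $\omega$ ranging over the support of $\widehat{f_{j,\rho}}$, not a single $\omega_\rho$. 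Consequently the crucial step
$$\|(f_j)_{B_\alpha}\|_2^2\lesssim\sum_\rho\|f_{j,\rho}\|_2^2\,\chi_{T_{j,\rho}}(x_\alpha)$$
fails, and the curvy Kakeya summation cannot be invoked.

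The reason a purely spatial decomposition suffices in Proposition~\ref{KRimpliesR} is specific to extension operators: there $f_j$ is already constrained to a thin slab $A_j(R)$ around the hypersurface $S_j$, so the \emph{position} of a cap $\rho_j$ on $S_j$ automatically determines a normal direction, i.e.\ the direction of the dual tube $\rho_j^*$. For a general H\"ormander-type phase this position--direction identification disappears: the direction of the curved tube carrying a piece of $T_{j,\lambda}f_j$ is governed by the \emph{frequency} content of $f_j$, not its position. The fix is to perform a genuine Gabor-type wavepacket decomposition indexed by pairs $(a,\omega)$ of position and frequency centres -- i.e.\ a discretisation of phase space into boxes of dimensions roughly $\lambda^{-1/2}\times\lambda^{1/2}$ per variable -- so that each wavepacket is simultaneously localised in both, and only then is $T_{j,\lambda}$ applied to a wavepacket concentrated on a single curved tube (with Schwartz tails). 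This is the ``wavepacket decomposition in the spirit of Bourgain'' that the argument requires, and it is precisely what your spatial-only decomposition omits. (You also have a small bookkeeping slip: the statement has scale $\lambda^{-1/2+\frac{2\varepsilon}{d(d+1)}}$ whereas your $\sigma=\varepsilon/(d(d+1))$ gives $\lambda^{-1/2+\frac{\varepsilon}{d(d+1)}}$, but this is cosmetic.)
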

We leave the details of how this leads to Theorem \ref{BCToscthm} to the interested reader.



We conclude this subsection on oscillatory integrals by setting the above multilinear oscillatory integral inequalities in a much broader framework.
Since we do not intend curvature to feature in our analysis, it would seem natural to stay in the context of $L^2$ estimates. A rather expansive setting would be to consider inequalities of the form
\begin{equation}\label{MOI}
\int_{\mathbb{R}^d}\prod_{j=1}^k|T_{j,\lambda}f_j|^{2p_j}\lesssim\lambda^{-\alpha}\prod_{j=1}^k\|f_j\|_{L^{2}(\mathbb{R}^{d_j})}^{2p_j},
\end{equation}
where the $T_{j,\lambda}$ are H\"ormander-type oscillatory integral operators associated to phase functions $\Phi_j:\mathbb{R}^{d_j}\times\mathbb{R}^{d}\rightarrow\mathbb{R}$, and the $p_j$ and $\alpha$ are
real exponents.



In the very broad setting of \eqref{MOI} the appropriate generalisation of the notion of \emph{transversality} is perhaps unclear. However, as we are considering $L^2$ estimates, there are certain ``ready-made" notions of transversality provided by the classical theory of the \emph{Brascamp--Lieb inequalities}. Indeed, in the special case where the phases $\Phi_j$ are nondegenerate bilinear forms\footnote{This argument in the setting of H\"ormander's theorem ($k=1$, $d_1=d$, $p_1=1$ and $\alpha=d$) simply reduces the inequality \eqref{MOI} to the $L^2$-boundedness of the Fourier transform; see \cite{Hormanderconjecture} or \cite{Stein}.} $\Phi_j(x,\xi)=\langle x,L_j\xi\rangle$, and $\alpha=\frac{1}{2}(\sum p_jd_j+d)$, a scaling argument reduces inequality \eqref{MOI} to
$$
\int_{\mathbb{R}^d}\prod_{j=1}^k\Bigl|\int_{\mathbb{R}^{d_j}}e^{ix\cdot L_j\xi}f_j(x)\psi_j(x/\lambda^{1/2},\xi/\lambda^{1/2})dx\Bigr|^{2p_j}d\xi
\leq C\prod_{j=1}^k\|f_j\|_{L^2(\mathbb{R}^{d_j})}^{2p_j}
$$
uniformly in $\lambda\gg 1$. Choosing $\psi_j$ such that $\psi_j(0,0)\not=0$ for each $1\leq j\leq k$, a limiting argument reveals that
$$
\int_{\mathbb{R}^d}\prod_{j=1}^k|\widehat{f}_j(L_j\xi)|^{2p_j}d\xi\leq C\prod_{j=1}^k\|f_j\|_{L^2(\mathbb{R}^{d_j})}^{2p_j},
$$
which by Plancherel's theorem in equivalent to
\begin{equation}\label{bl}
\int_{\mathbb{R}^d}\prod_{j=1}^k (f_j\circ L_j)^{p_j}\leq C\prod_{j=1}^k\left(\int_{\mathbb{R}^{d_j}}f_j\right)^{p_j}
\end{equation}
for nonnegative integrable functions $f_j$, $1\leq j\leq k$.
Inequality \eqref{bl} is the classical Brascamp--Lieb inequality with datum $(\mathbf{L},\mathbf{p})=((L_j),(p_j))$, and following \cite{BCCT}, we denote by $\mathbf{BL}(\mathbf{L},\mathbf{p})$ the smallest value of $C$ for which \eqref{bl} holds over all nonnegative inputs $f_j\in\mathbb{R}^{d_j}$, $1\leq j\leq k$. The inequality \eqref{bl} generalises several important inequalities in analysis, including the multilinear H\"older, Young's convolution and Loomis--Whitney inequalities.
The Brascamp--Lieb inequalities have important applications in convex geometry and have been studied extensively -- see the 2006 ICM survey article of Barthe \cite{Barthe}. It is interesting to reflect that the ``self-similarity" properties of $\mathcal{C}_\kak$ and $\mathcal{C}_\ckak$ captured by the recursive inequalities \eqref{kakrecursive} and \eqref{ckakrecursive} are really manifestations of a more primordial property enjoyed by the general Brascamp--Lieb functional
$$\mathbf{BL}(\mathbf{L},\mathbf{p}; \mathbf{f}):=\frac{\int_{\mathbb{R}^d}\prod_{j=1}^m
(f_j\circ
L_j)^{p_j}}{\prod_{j=1}^m\left(\int_{\mathbb{R}^{d_j}}f_j\right)^{p_j}},$$
where $\mathbf{f}:=(f_j)$. A corresponding recursive inequality in this setting which has proved very useful, due to K. Ball \cite{Ball}, states that given two inputs $\mathbf{f}$ and $\mathbf{f}'$,
\begin{equation} \label{e:Ballinequality}
\mbox{BL}(\textbf{L},\textbf{p};\textbf{f})\mbox{BL}(\textbf{L},\textbf{p};\textbf{f}')
\leq \sup_{x\in\mathbb{R}^d}\mbox{BL}(\textbf{L},\textbf{p};(g_j^x))\;\mbox{BL}(\textbf{L},\textbf{p};\textbf{f}*\textbf{f}'),
\end{equation}
where $g_j^x(y) := f_j(L_jx-y)f'_j(y)$ and $\mathbf{f}*\mathbf{f}':=(f_j*f_j')$; see \cite{Ball}, \cite{BCCT} and \cite{BB}.


We may therefore interpret the oscillatory integral inequalities \eqref{MOI} with $\alpha=\frac{1}{2}(\sum p_jd_j+d)$ as certain ``oscillatory Brascamp--Lieb inequalities"\footnote{For closely related multilinear oscillatory integral inequalities, that may also be viewed as oscillatory Brascamp--Lieb inequalities, see \cite{ChLiThTa}, \cite{ChBL} and \cite{BCCTshort}.}. Indeed, one might tentatively conjecture that if $(\mathbf{L},\mathbf{p})$ is a Brascamp--Lieb datum for which $\mbox{BL}(\mathbf{L},\mathbf{p})<\infty$, and $\Phi_j:\mathbb{R}^{d_j}\times\mathbb{R}^d\rightarrow\mathbb{R}$ is smooth in a neighbourhood of the origin in $\mathbb{R}^{d_j}\times\mathbb{R}^d$ and satisfies $d_\xi d_x\Phi_j(0)=L_j$ for each $1\leq j\leq k$, then there is a finite constant $C$ for which \eqref{MOI} holds with $\alpha=\frac{1}{2}(\sum p_jd_j+d)$.\footnote{As the hypothesis on the phase is local, the cutoff functions $\psi_j$ would of course need to have suitably small supports.} The issue of finiteness of the general Brascamp--Lieb constant $\mbox{BL}(\mathbf{L},\mathbf{p})$ is quite nontrivial. As is shown in \cite{BCCT}, $\mbox{BL}(\mathbf{L},\mathbf{p})<\infty$ if and only if the scaling condition $\sum p_jd_j=d$ holds and
\begin{eqnarray}\label{dimension}
\dim(V) \leq \sum_{j=1}^k p_j \dim(L_j V) \hbox{ for all subspaces } V \subseteq \mathbb{R}^d,
\end{eqnarray}
and so in particular, this general set-up stipulates that $\alpha=d$ in \eqref{MOI}. The finiteness condition \eqref{dimension} is somewhat difficult to interpret in general, although under certain additional geometric hypotheses on the data $(\mathbf{L},\mathbf{p})$ it becomes much more transparent. A particularly simple example is the affine-invariant ``basis" hypothesis
\begin{equation}\label{strongtrans}
\bigoplus_{j=1}^k \ker L_j=\mathbb{R}^d,
\end{equation}
under which $\mbox{BL}(\mathbf{L},\mathbf{p})<\infty$ if and only if $p_j=\frac{1}{k-1}$ for all $1\leq j\leq k$. The basic example of data satisfying these strong conditions are those of the (affine-invariant) Loomis--Whitney inequality discussed in Section \ref{twotwo}. Indeed the proof of the affine-invariant Loomis--Whitney inequality alluded to in Section \ref{twotwo} (see also \cite{Finner}) reveals that
\begin{equation}\label{affinefinner}
\mathbf{BL}(\textbf{L},\textbf{p})=
\left|\star\bigwedge_{j=1}^k\star
Z_{d_j}(L_j)\right|^{-\frac{1}{k-1}},
\end{equation}
where $Z_{d_j}(L_j)\in\Lambda^{d_j}(\mathbb{R}^d)$ denotes the wedge product of the rows of the $d_j\times d$ matrix $L_j$, and $\star$ the Hodge star.
For an in-depth treatment of the Brascamp--Lieb inequalities see \cite{BCCT} and the references there.

The oscillatory Brascamp--Lieb inequalities \eqref{MOI} with the optimal exponent $\alpha=d$ appear to be extremely difficult. We observe that the oscillatory version of the tautological Brascamp--Lieb inequality obtained by setting $k=1$ is precisely H\"ormander's theorem (Theorem \ref{Hormander}), and the oscillatory Loomis--Whitney inequality contains the unresolved endpoint $d$-linear restriction conjecture. However, if we impose a strong structural condition, such as \eqref{strongtrans}, we are able to obtain \eqref{MOI} with a near-optimal exponent $\alpha$. Let us see how this leads to a a simple generalisation of Theorem \ref{BCToscthm}. Let us suppose that we have the spanning condition
\begin{equation}\label{spanning}
\bigoplus_{j=1}^k \ker d_xd_\xi\Phi_j(x^{(j)},\xi)=\mathbb{R}^d
\end{equation}
for all points $(x^{(1)},\xi)\in\supp(\psi_{1}),\hdots,(x^{(d)},\xi)\in\supp(\psi_{d})$. In order to quantify \eqref{spanning} let
$$
X_{d_j}(\Phi_j):= Z_{d_j}(d_xd_\xi\Phi_j)=\bigwedge_{\ell=1}^{d_j}\frac{\partial}{\partial
x_{\ell}}\nabla_{\xi}\Phi_j
$$
for each $1\leq j\leq k$. Invoking duality again (explicitly this time) we see that $\star X_{d_j}(\Phi_j)$ is a $\Lambda_{d-d_j}(\mathbb{R}^d)$-valued function, and thus $\star\bigwedge_j \star X_{d_j}(\Phi_j)$ is a well-defined real-valued function quantifying the hypothesis \eqref{spanning}. This allows us to substitute \eqref{spanning} for
\begin{equation}\label{spanningq}
\Bigl|\star\bigwedge_{j=1}^k \star X_{d_j}(\Phi_j)(x^{(j)},\xi)\Bigr|>\nu
\end{equation}
for some fixed $\nu>0$.
\begin{theorem}\label{BCTOBL}
If
\eqref{spanningq} holds
then for each $\varepsilon>0$ there exists a constant $C_\varepsilon<\infty$ such that
\begin{equation}\label{BCTOBLineq}
\int_{\mathbb{R}^d}\prod_{j=1}^k|T_{j,\lambda}f_j|^{\frac{2}{k-1}}\leq C_\varepsilon\lambda^{-d+\varepsilon}\prod_{j=1}^k\|f_j\|_{L^{2}(\mathbb{R}^{d_j})}^{\frac{2}{k-1}}
\end{equation}
for all $f_1\in L^2(\mathbb{R}^{d_1}),\hdots, f_k\in L^2(\mathbb{R}^{d_k})$.
\end{theorem}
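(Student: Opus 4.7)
My plan is to mirror the two-level recursion that proves Theorem \ref{BCToscthm}, adapting it from the $k=d$, $d_j=d-1$ Loomis--Whitney geometry to the general basis-type Brascamp--Lieb geometry dictated by \eqref{spanningq}. Observe that the basis condition forces $\sum_j d_j = d(k-1)$ and $p_j = \tfrac{1}{k-1}$, so the exponent $\tfrac{2}{k-1}$ in \eqref{BCTOBLineq} is exactly what the Loomis--Whitney template \eqref{affinefinner} produces. The three ingredients are: a flat affine-invariant Kakeya inequality serving as a base case, a curvy Kakeya self-similarity recursion of the type of Proposition \ref{ckakrec}, and a wavepacket recursion of the type of Proposition \ref{oscosckak} transferring the curvy Kakeya bound to an oscillatory bound.

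First I would introduce the generalised Kakeya quantities. For each $j$, the relevant family $\mathbb{T}_j$ consists of $\delta$-neighbourhoods of the $(d-d_j)$-dimensional level sets $\{\xi : \nabla_x \Phi_j(a,\xi) = \omega\}$ indexed by $(a,\omega) \in \mathbb{R}^{d_j} \times \mathbb{R}^{d_j}$. Let $\mathcal{C}_\ckak(\delta)$ denote the smallest constant $C$ in
$$\Bigl\|\prod_{j=1}^k \sum_{T_j \in \mathbb{T}_j} \chi_{T_j}\Bigr\|_{L^{1/(k-1)}(\mathbb{R}^d)}^{1/(k-1)} \leq C\delta^d \prod_{j=1}^k (\#\mathbb{T}_j)^{1/(k-1)},$$
and let $\mathcal{C}_\kak(\delta)$ be the analogous constant for the straight version, in which each $T_j$ is a $\delta$-neighbourhood of an affine translate of $\ker d_x d_\xi \Phi_j$ at a fixed base point. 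The base step $\mathcal{C}_\kak(\delta) \lesssim 1$ is now considerably easier than in \cite{BCT}: by the combinatorial linearisation used to pass from \eqref{comblw} to \eqref{guthest}, it reduces to the affine-invariant Loomis--Whitney/Finner inequality with the explicit constant \eqref{affinefinner}. No Guth-type endpoint input is required, because the basis condition \eqref{strongtrans} puts us precisely in the subclass of Brascamp--Lieb data for which Loomis--Whitney is classical. The self-similarity recursion $\mathcal{C}_\ckak(\delta) \leq c\, \mathcal{C}_\kak(\delta^{1/2})\, \mathcal{C}_\ckak(\delta^{1/2})$ is then proved exactly as Proposition \ref{ckakrec}, by tiling $\mathbb{R}^d$ with cubes $Q$ of side $\delta^{1/2}$ and noting that each $T_j \cap Q$ is contained in a straight $(d-d_j)$-dimensional tube of thickness $\delta$ and length $O(\delta^{1/2})$. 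Iterating $O(\log\log(1/\delta))$ times yields $\mathcal{C}_\ckak(\delta) = O(\log(1/\delta))^\kappa$.

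The final step is the oscillatory recursion
$$\mathcal{C}_\osc(\lambda) \leq c\lambda^\varepsilon\, \mathcal{C}_\osc(\lambda^{1/2})\, \mathcal{C}_\ckak(\lambda^{-1/2+O(\varepsilon)}) + C_{\varepsilon,M}\lambda^{-M},$$
proved by the wavepacket scheme underlying Proposition \ref{oscosckak}: localise the $L^{2/(k-1)}$ norm to balls of radius $\lambda^{1/2}$, decompose each $f_j$ on a $\lambda^{-1/2}$-scale partition of its support in $\mathbb{R}^{d_j}$, invoke $\mathcal{C}_\osc(\lambda^{1/2})$ on each ball after modulation, and then use Plancherel together with a Jensen-based pointwise wavepacket comparison to convert the resulting $L^2$ average into an integral of products of indicators of the $(d-d_j)$-dimensional tubes dual to the $\lambda^{-1/2}$-caps. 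The hypothesis \eqref{spanningq} guarantees that these tube families are admissible for $\mathcal{C}_\ckak$ at scale $\lambda^{-1/2}$. Iterating $O(\log\log\lambda)$ times exactly as in \eqref{iterrest} produces $\mathcal{C}_\osc(\lambda) = O(\lambda^\varepsilon)$, which is \eqref{BCTOBLineq}.

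The main obstacle is the wavepacket step itself. When the $d_j$ vary and are smaller than $d-1$, the caps $\rho_j$ in $\mathbb{R}^{d_j}$ have dual sets $\rho_j^*$ that are rectangular boxes of dimension $(d-d_j)$ in physical space, with aspect ratios varying across $j$. The Jensen-type pointwise bound $|\widehat{f}_{j,\rho_j}(x+y)|^2 \lesssim \mbox{(vol)}^{-1}\,|\widehat{\tilde f}_{j,\rho_j}|^2 * \chi_{\rho_j^*}(x)$ still holds with the appropriate rectangular $\rho_j^*$, and the spanning condition \eqref{spanningq} transports directly into the curvy Kakeya hypothesis on the $\rho_j^*$. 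Ensuring that the Plancherel normalisations, the tube volumes, and the aspect ratios line up so that the curvy Kakeya bound applies at scale $\lambda^{-1/2}$ is the primary bookkeeping difficulty, but no new conceptual ingredient is needed beyond careful dimensional tracking; the scheme is otherwise structurally identical to the $d_j = d-1$ case of \cite{BCT}.
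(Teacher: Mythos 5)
Your overall scheme matches the paper's intended route: the paper itself simply states that Theorem~\ref{BCTOBL} is a ``straightforward generalisation'' of Theorem~\ref{BCToscthm} whose proof in \cite{BCT} ``readily extends,'' and your two-level recursion (curvy Kakeya self-similarity of the type of Proposition~\ref{ckakrec}, then a wavepacket recursion of the type of Proposition~\ref{oscosckak}) is exactly that mechanism, transported from codimension-one tubes to the mixed-codimension slabs dictated by the basis condition \eqref{strongtrans}.

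However, there is a genuine error in your treatment of the Kakeya base case. You assert that under the basis condition the straight-tube bound $\mathcal{C}_\kak(\delta)\lesssim 1$ ``reduces to the affine-invariant Loomis--Whitney/Finner inequality with the explicit constant \eqref{affinefinner}'' and that ``no Guth-type endpoint input is required.'' This is not correct for $k\geq 3$. The combinatorial Loomis--Whitney inequality \eqref{comblw} is not Finner's inequality: the maps $\pi_{\alpha_j}$ vary within each family $\mathcal{A}_j$, and this perturbed inequality is precisely equivalent (via the passage you cite) to Guth's endpoint $d$-linear Kakeya theorem \eqref{guthest}, not to the classical fixed-projection case. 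The same issue persists in the general basis setting: the recursion of Proposition~\ref{ckakrec} applies $\mathcal{C}_\kak$ at scale $\delta^{1/2}$ to tube families whose directions genuinely vary across the family, and a naive attempt to decouple -- e.g. using the $p\leq 1$ quasi-triangle inequality termwise and then Finner on each summand -- produces $\prod_j\sum_{\alpha_j}(\int f_{\alpha_j})^{1/(k-1)}$, which (by concavity of $t\mapsto t^{1/(k-1)}$) dominates rather than is dominated by the target $\prod_j(\sum_{\alpha_j}\int f_{\alpha_j})^{1/(k-1)}$. The only case where the perturbed inequality is elementary is $k=2$, where the exponents $p_j=1$ allow termwise decoupling. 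For $k\geq 3$ the requisite input is the nontrivial basis-case multilinear Kakeya theorem from \cite{BCT} (with $\varepsilon$-loss, via heat flow/induction on scales), or its endpoint counterparts in \cite{G}, \cite{CV}; your proof needs this cited as a black box, exactly as the $k=d$ argument in Section~\ref{fourthree} cites Theorem~\ref{guthkak}. Once that is corrected, the remaining steps -- the curvy recursion, the $O(\log\log)$-fold iteration, and the wavepacket transfer with rectangular dual slabs $\rho_j^*$ of codimension $d_j$ -- are structurally sound and consistent with the argument the paper defers to.
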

Theorem \ref{BCTOBL} is a straightforward generalisation of Theorem \ref{BCToscthm}. We refer the reader to \cite{BCT} for the proof of Theorem \ref{BCToscthm} which readily extends to this setting.

The oscillatory Brascamp--Lieb inequalities \eqref{MOI} have considerable scope. In addition to encompassing the classical H\"ormander theorem, the classical Brascamp--Lieb inequalities and a variety of multilinear restriction/Kakeya-type inequalities, they also contain a broad family of multilinear Radon-like transforms that arise naturally in multilinear harmonic analysis and dispersive PDE.

\subsection{Transversal multilinear Radon-like transforms}
Restricting the oscillatory Brascamp--Lieb inequalities (\eqref{MOI} with $\alpha=d$) to phase functions of the form $\Phi_j(x,\xi)=x\cdot B_j(\xi)$, where $B_j$ is a nonlinear mapping from $\mathbb{R}^d$ to $\mathbb{R}^{d_j}$, leads us to the so-called \emph{nonlinear Brascamp--Lieb inequalities}, whereby the linear maps $L_j$ in \eqref{bl} are replaced with the nonlinear maps $B_j$. Indeed our tentative conjecture concerning the oscillatory Brascamp--Lieb inequalities becomes the following.
\begin{thma}[Nonlinear Brascamp--Lieb]
Let $(\mathbf{L},\mathbf{p})$ be a Brascamp--Lieb datum for which $\mathbf{BL}(\mathbf{L},\mathbf{p})<\infty$ and, for each $1\leq j\leq k$, let  $B_j:\mathbb{R}^{d}\rightarrow\mathbb{R}^{d_j}$ be a smooth submersion in a neighbourhood of $0\in\mathbb{R}^d$ with $dB_j(0)=L_j$. Then there exists a neigbourhood $U$ of $0$ such that
\begin{equation}\label{nlbl}
\int_{U}\prod_{j=1}^k (f_j\circ B_j)^{p_j}\lesssim
\prod_{j=1}^k\left(\int_{\mathbb{R}^{d_j}}f_j\right)^{p_j}.
\end{equation}
\end{thma}
Applications of such nonlinear inequalities typically require more quantitative statements. We do not concern ourselves with such matters here.

While obtaining such nonlinear inequalities would also appear to be rather difficult in full generality, there has been some progress under additional structural hypotheses on the data $(\mathbf{L},\mathbf{p})$. In particular, under the ``basis" hypothesis \eqref{strongtrans} the following theorem is a nonlinear version of the affine-invariant inequality \eqref{affinefinner}.
\begin{theorem}[\cite{BCW}, \cite{BB}]\label{t:main} Suppose
that $B_j:\mathbb{R}^d\rightarrow\mathbb{R}^{d_j}$ is a
smooth submersion
in a neighbourhood of a point
$0\in\mathbb{R}^d$ for each $1\leq j\leq k$. Suppose further that
\begin{equation} \label{e:directsumnonlin}
\bigoplus_{j=1}^k \ker dB_j(0)=\mathbb{R}^d
\end{equation}
and
\begin{equation}\label{radnu}
\Bigl|\star\bigwedge_{j=1}^k\star Z_{d_j}(dB_j(0))\Bigr| \geq \nu.
\end{equation}
Then there exists a neighbourhood $U$ of $0$, and a constant $C$ independent of $\nu$ such that
\begin{equation}\label{linadmisnl}
\int_{U}\prod_{j=1}^k (f_j\circ B_j)^{\frac{1}{k-1}}\leq
C\nu^{-\frac{1}{k-1}}\prod_{j=1}^k\left(\int_{\mathbb{R}^{d_j}}f_j\right)^{\frac{1}{k-1}}
\end{equation}
for all nonnegative $f_j\in L^1(\mathbb{R}^{d_j})$, $1\leq j\leq k$.
\end{theorem}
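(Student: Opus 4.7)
The plan is an induction-on-scales argument, parallel to Propositions \ref{kakkakkak} and \ref{KRimpliesR} above, using the affine-invariant Loomis--Whitney formula \eqref{affinefinner} as the sharp linear base case. For $0<\delta\le 1$, let $\mathcal{N}(\delta)$ denote the smallest constant for which \eqref{linadmisnl} holds on $U=B(0,\delta)$, uniformly over all admissible data $(B_1,\ldots,B_k)$ satisfying \eqref{e:directsumnonlin} and \eqref{radnu} with fixed uniform $C^2$ bounds on the $B_j$. The goal reduces to showing $\mathcal{N}(\delta_0)\le C\nu^{-1/(k-1)}$ for some small $\delta_0>0$.

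The recursive step proceeds as follows. Partition $B(0,\delta)$ into a bounded-overlap cover by small balls $\{B(x_\alpha,\delta_1)\}_\alpha$ with $\delta_1\ll\delta$, and on each ball Taylor-expand $B_j(x)=B_j(x_\alpha)+L_{j,\alpha}(x-x_\alpha)+O(\delta_1^2)$, where $L_{j,\alpha}:=dB_j(x_\alpha)$. By continuity of the functional in \eqref{radnu}, the linearised data $(L_{1,\alpha},\ldots,L_{k,\alpha})$ still satisfies the basis hypothesis with transversality parameter $\nu-O(\delta)$. Applying the affine Loomis--Whitney inequality \eqref{affinefinner} to the linearised maps on each small ball, and absorbing the Taylor remainder into a multiplicative factor of $1+O(\delta_1)$, yields
\[
\int_{B(x_\alpha,\delta_1)}\prod_j(f_j\circ B_j)^{1/(k-1)}\le (1+O(\delta_1))\,\nu^{-1/(k-1)}\prod_j\Bigl(\int_{B_j(B(x_\alpha,\delta_1))}f_j\Bigr)^{1/(k-1)}.
\]
Summing over $\alpha$ and invoking a \emph{discrete} affine Loomis--Whitney inequality applied to the partial masses $\bigl\{\int_{B_j(B(x_\alpha,\delta_1))}f_j\bigr\}_\alpha$ then collapses the right hand side to a bound of the form $(1+O(\delta))\,\nu^{-1/(k-1)}\prod_j(\int_{\mathbb{R}^{d_j}}f_j)^{1/(k-1)}$, giving the desired estimate.

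The main obstacle is this discrete Loomis--Whitney step on the image side, namely to show
\[
\sum_\alpha\prod_j\Bigl(\int_{B_j(B(x_\alpha,\delta_1))}f_j\Bigr)^{1/(k-1)}\le (1+O(\delta))\prod_j\Bigl(\int_{\mathbb{R}^{d_j}}f_j\Bigr)^{1/(k-1)}
\]
uniformly in $\delta_1\ll\delta$. When the $B_j$ are linear, the basis hypothesis \eqref{e:directsumnonlin} guarantees that the images $B_j(B(x_\alpha,\delta_1))$ form a product-type, essentially disjoint cover of $B_j(B(0,\delta))$, and the above reduces to the classical discrete Loomis--Whitney inequality with constant exactly $1$. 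For nonlinear $B_j$ the images are only approximately products; the bounded curvature of the $B_j$ introduces an $O(\delta)$ overlap error, which I would handle via a Finner-style Hölder-slicing argument on the image side, or alternatively via a nonlinear analogue of Ball's tensorisation inequality \eqref{e:Ballinequality}. A secondary technical point is uniform control of the Taylor errors across all small balls, addressed by a preliminary rescaling $B_j\mapsto \epsilon^{-1}B_j(\epsilon\,\cdot)$ ensuring uniformly bounded second derivatives on the unit ball; it is here, and in the continuity of the wedge product in \eqref{radnu}, that the essential role of the quantitative transversality parameter $\nu$ becomes visible in the final constant $C\nu^{-1/(k-1)}$.
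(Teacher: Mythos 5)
Your proposal captures the correct high-level idea --- linearise on small balls, apply the sharp affine Loomis--Whitney inequality locally, and sum up --- and correctly identifies the hard step, but it does not resolve it, and the route you outline is structurally different from the paper's.

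The paper's argument (from \cite{BB}) is a genuine induction-on-scales built around a \emph{doubling-scale} recursive inequality: with $\mathcal{C}(\delta)$ the best constant for the \emph{nonlinear} inequality over $B(0,\delta)$ (with quantitative uniformity in the $B_j$), one proves
\[
\mathcal{C}(\delta)\leq (1+O(\delta^\alpha))\,\mathcal{C}(\delta/2),
\]
and then iterates to obtain a convergent infinite product, the ``base case'' as $\delta\to 0$ being supplied by the fact that rescaled $B_j$'s converge to their linearisations. Each step of this recursion only compares two neighbouring scales, and crucially the bound at scale $\delta/2$ is still the nonlinear constant, so the error committed at each stage can be absorbed multiplicatively; the self-referential structure is what makes the factor $1+O(\delta^\alpha)$ achievable. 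By contrast, your plan is a \emph{one-shot} argument: partition into balls of scale $\delta_1\ll\delta$, replace $B_j$ by its linearisation on each ball, apply the \emph{linear} Loomis--Whitney inequality locally, and then try to recombine via a single ``discrete Loomis--Whitney'' inequality on the image side. This pushes all the difficulty into the recombination step, across the full gap of scales from $\delta$ down to $\delta_1$.

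That recombination step is the genuine gap. You assert
\[
\sum_\alpha\prod_j\Bigl(\int_{B_j(B(x_\alpha,\delta_1))}f_j\Bigr)^{1/(k-1)}\le (1+O(\delta))\prod_j\Bigl(\int_{\mathbb{R}^{d_j}}f_j\Bigr)^{1/(k-1)},
\]
appealing to a discrete Loomis--Whitney inequality plus an $O(\delta)$ overlap error. In the linear case the images $B_j(B(x_\alpha,\delta_1))$ align along quotient lattices and the discrete inequality applies with constant exactly $1$, but in the nonlinear case the fibres of $B_j$ curve, the image cells drift and overlap in a way that is not controlled by a single multiplicative $O(\delta)$ factor in any obvious way --- and there are $\sim(\delta/\delta_1)^{d-d_j}$ cells overlapping any one fibre, so a naive error analysis diverges as $\delta_1\to 0$. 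You flag this as the ``main obstacle'' but offer only two labels (``Finner-style H\"older slicing on the image side'', ``a nonlinear analogue of Ball's inequality'') without indicating how either closes the gap. The second of these is in fact essentially what \cite{BB} does, but as a single doubling-scale inequality iterated to a convergent product, not as a one-shot bound. Without supplying that lemma, the proposal does not constitute a proof; with it, your argument would be a reorganisation of the paper's rather than a different one.

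A secondary point worth flagging: the paper's recursion needs an a priori finiteness of $\mathcal{C}(\delta)$ and a limiting statement $\lim_{\delta\to 0}\mathcal{C}(\delta)\lesssim\nu^{-1/(k-1)}$ to close the iteration; your write-up should make explicit where these come from (e.g.\ from the rescaling $B_j\mapsto\epsilon^{-1}B_j(\epsilon\,\cdot)$ you mention, together with the continuity of the wedge functional in \eqref{radnu}).
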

When $k=d$, Theorem \ref{t:main} reduces to a nonlinear version of the affine-invariant Loomis--Whitney inequality \cite{BCW}; see also \cite{BHT}. A more quantitative version of the general Theorem \ref{t:main} can be found in \cite{BB}, and a global version under the natural homogeneity assumption can be found in \cite{BBGshort}. There are similar results under higher-order hypotheses on the mappings $B_j$, in the case where their fibres are one-dimensional (i.e. $d_j=d-1$ for all $j$); see in particular \cite{TW} and \cite{Stovall}.

As may be expected given the numerous recursive inequalities that have featured so far (and in particular, Ball's inequality \eqref{e:Ballinequality}), Theorem \ref{t:main} may be proved by induction-on-scales. However, unlike all of the other results that we have deduced via a recursive inequality, the inequality \eqref{e:directsumnonlin} is \emph{sharp} in the sense that it would only follow from \eqref{BCTOBLineq} if we were able to remove the $\epsilon$-loss there. In short, we require a recursive inequality which, upon iterating, does not lead to an unbounded factor, such as a logarithm.
In order to achieve this we let $\mathcal{C}(\delta)$ denote the smallest constant $C$ in the inequality
\begin{equation}\label{linadmisnlind}
\int_{B(0,\delta)}\prod_{j=1}^k (f_j\circ B_j)^{\frac{1}{k-1}}\leq
C\nu^{-\frac{1}{k-1}}\prod_{j=1}^k\left(\int_{\mathbb{R}^{d_j}}f_j\right)^{\frac{1}{k-1}}
\end{equation}
holds over all smooth\footnote{As we have seen before, it is necessary that the smoothness ingredient in the definition of $\mathcal{C}(\delta)$ is suitably uniform; see \cite{BB}.} submersions $B_j$ satisfying \eqref{radnu} and input functions $f_j$. As is shown in \cite{BB}, there is a constant $\alpha>0$, depending on the smoothness of the $B_j$, for which the recursive inequality
\begin{equation}\label{radrec}
\mathcal{C}(\delta)\leq (1+O(\delta^\alpha))\mathcal{C}(\delta/2)
\end{equation}
holds for all $\delta>0$ sufficiently small. Since the factor $1+O(\delta^\alpha)$ converges to $1$ sufficiently quickly as $\delta\rightarrow 0$, iterating \eqref{radrec} leads to the desired bound $\mathcal{C}(\delta)\lesssim\nu^{-\frac{1}{k-1}}$ via a convergent product.
This efficient inductive approach is based on the closely-related \cite{BCT}. We note that there is a further precedent for this in \cite{TaoSharpCone}.

The nonlinear Brascamp--Lieb inequalities \eqref{nlbl} may be interpreted as estimates on certain \emph{multilinear Radon-like transforms}, and it is this perspective which provides the link with applications in harmonic analysis and dispersive PDE. Let us make this connection clear at least on a somewhat informal level.
Let us call a multilinear Radon-like transform a mapping $R$ of the form
$$Rf(x)=\int_{\mathbb{R}^{d_1}\times\cdots\times\mathbb{R}^{d_{k-1}}}f_1(y_1)\cdots f_{k-1}(y_{k-1})\delta(F(y,x))\psi(y,x)dy,$$
where $f=(f_j)_{j=1}^{k-1}$, $f_j:\mathbb{R}^{d_j}\rightarrow\mathbb{C}$ is a suitable test function, $x\in\mathbb{R}^n$ and $F:(\mathbb{R}^{d_1}\times\cdots\times\mathbb{R}^{d_{k-1}})\times\mathbb{R}^n\rightarrow\mathbb{R}^m$ is a suitably smooth function.
\footnote{Provided $\nabla F$ does not vanish on the support of $\psi$ then $\delta\circ F$ is a well-defined distribution.}
Notice that $Rf(x)$ may be interpreted as a surface integral (or ``average") of the tensor product $f_1\otimes\cdots\otimes f_{k-1}$ over the submanifold
$$
M_x=\{y\in\mathbb{R}^{d_1}\times\cdots\times\mathbb{R}^{d_{k-1}}: F(y,x)=0, (y,x)\in\supp(\psi)\},$$
which generically has dimension $d_1+\cdots+d_{k-1}-m$.
It is pertinent to seek natural nondegeneracy conditions on the function $F$, and exponents $p_1,\hdots,p_k$, for which $R$ extends to a bounded mapping from $L^{p_1}(\mathbb{R}^{d_1})\times\cdots\times L^{p_{k-1}}(\mathbb{R}^{d_{k-1}})$ into $L^{p_k'}(\mathbb{R}^n)$. In the linear setting (corresponding to $k=2$ here) such problems have been the subject of extensive study in recent years. In order to obtain nontrivial estimates (so-called ``$L^p$-improving") it is necessary that the nondegeneracy conditions imposed on $F$ capture a certain underlying ``curvature"; see for example the paper of Tao and Wright \cite{TW}. Our aim here is to focus on the multilinear setting, placing appropriate transversality, rather than curvature, conditions on $F$.
By duality, an $L^{p_1}(\mathbb{R}^{d_1})\times\cdots\times L^{p_{k-1}}(\mathbb{R}^{d_{k-1}})$ into $L^{p_k'}(\mathbb{R}^n)$ bound on $R$ may be expressed as a bound on a multilinear form
\begin{equation}\label{MR}\int_{\mathbb{R}^{d_1}\times\cdots\times\mathbb{R}^{d_{k}}}
\prod_{j=1}^{k}f_j(y_j)\delta(F(y))\psi(y)dy\lesssim\prod_{j=1}^{k}\|f_j\|_{L^{p_j}(\mathbb{R}^{d_j})},
\end{equation}
where we have re-labelled $n=d_k$.
By parametrising the support of the distribution $\delta\circ F$ we may write the above inequalities \eqref{MR} in the form
\begin{equation}\label{BL}\int_{\mathbb{R}^{d}}
\prod_{j=1}^{k}f_j(B_j(x))\psi(x)dx\lesssim\prod_{j=1}^{k}\|f_j\|_{L^{p_j}(\mathbb{R}^{d_j})}.
\end{equation}
Inequality \eqref{BL} is quickly converted into \eqref{nlbl} by replacing $p_j\in [1,\infty]$ with $1/p_j\in [0,1]$ and then $f_j$ with $f_j^{p_j}$.


A particularly palatable application of Theorem \ref{t:main} in the setting of such multilinear Radon-like transforms is the following.
\begin{cor}[\cite{BCW},\cite{BB}]
If
$G:(\mathbb{R}^{d-1})^{d-1}\rightarrow\mathbb{R}$ is a smooth function such that
$$
|\det(\nabla_{y_1}G(0),\ldots,\nabla_{y_{d-1}}G(0))| \geq
\nu,$$ then there exists a neighbourhood $V$ of the origin
in $(\mathbb{R}^{d-1})^{d-1}$, and a constant $C$ independent of $\nu$ such that
\begin{equation}\label{transrad}
\int_{V}g_1(y_1)\cdots g_{d-1}(y_{d-1})g_d(y_1+\cdots+y_{d-1})\delta(G(y))\,\mathrm{d}y \leq C\nu^{-\frac{1}{d-1}}\prod_{j=1}^d\|g_j\|_{(d-1)'}
\end{equation}
for all nonnegative $g_j \in L^{(d-1)'}(\mathbb{R}^{d-1})$, $1 \leq
j \leq d$.
\end{cor}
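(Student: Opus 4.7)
The plan is to view the left-hand side as a multilinear Radon-like transform, extract the $\nu$-dependence through the co-area formula, and reduce the residual $\nu$-free estimate to an instance of the nonlinear Brascamp--Lieb inequality of Theorem \ref{t:main}. Without loss of generality $G(0)=0$, since otherwise $V$ can be chosen to avoid $\{G=0\}$ and the integral vanishes.

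The first step extracts the $\nu^{-1/(d-1)}$ factor via the co-area formula
$$\int_V \delta(G(y))\,\Phi(y)\,dy = \int_{V \cap M}\Phi(y)\,|\nabla G(y)|^{-1}\,dS(y),$$
where $M=\{G=0\}$ and $dS$ denotes surface measure. Hadamard's inequality applied to the matrix with columns $\nabla_{y_j}G(0)$ gives $\prod_{j=1}^{d-1}|\nabla_{y_j}G(0)| \geq \nu$, and the AM--GM inequality then yields $|\nabla G(0)|^2 = \sum_j |\nabla_{y_j}G(0)|^2 \gtrsim \nu^{2/(d-1)}$, whence $|\nabla G(y)|^{-1}\lesssim \nu^{-1/(d-1)}$ on a sufficiently small neighbourhood $V$. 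The problem thus reduces to the $\nu$-independent surface estimate
$$\int_{M \cap V}g_1(y_1)\cdots g_{d-1}(y_{d-1})\,g_d(y_1+\cdots+y_{d-1})\,dS(y) \lesssim \prod_{j=1}^d\|g_j\|_{(d-1)'}.$$

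Parametrising $M$ near $0$ by the implicit function theorem (available since $\nabla G(0)\neq 0$), the linear maps $y\mapsto y_j$ for $j<d$ and $y\mapsto y_1+\cdots+y_{d-1}$ descend to $d$ smooth submersions $\tilde B_j$ onto $\mathbb{R}^{d-1}$, and the surface estimate takes the form of the nonlinear Brascamp--Lieb functional \eqref{linadmisnl}. In the cleanest case $d=3$ the ambient dimension $\dim T_0 M = 3$ matches the basis-case requirement of Theorem \ref{t:main} exactly: each $\ker d\tilde B_j(0)$ is one-dimensional, the quantitative direct-sum hypothesis \eqref{e:directsumnonlin}--\eqref{radnu} unwinds precisely to the linear independence of $\nabla_{y_1}G(0),\nabla_{y_2}G(0)$ with quantitative constant $\nu$, and Theorem \ref{t:main} directly yields the conclusion with matching exponent $k-1=(d-1)'=2$.

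The principal obstacle is the case $d\geq 4$. Here the target exponent $(d-1)'$ is strictly smaller than the basis-case exponent $k-1=d-1$, and correspondingly $\sum_j\dim\ker d\tilde B_j(0)|_{T_0M}>\dim T_0M$, so the direct-sum hypothesis of Theorem \ref{t:main} fails by a dimension count. Following \cite{BCW} and \cite{BB}, one treats this regime by induction on scales in the spirit of the recursive inequality \eqref{radrec}: the integral is decomposed over cubes of side-length $\delta$ on which the $\tilde B_j$ are approximated by their linearisations; the linear Brascamp--Lieb constant is then controlled by the wedge-product expression \eqref{affinefinner}, which feeds in precisely the $\nu^{-1/(d-1)}$ dependence extracted above; and the smallness of the nonlinear perturbation yields a recursion $\mathcal{C}(\delta)\leq(1+O(\delta^\alpha))\mathcal{C}(\delta/2)$ whose iteration converges to a $\nu$-independent absolute constant, giving the stated inequality.
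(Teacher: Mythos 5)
Your co-area step misattributes the source of the factor $\nu^{-1/(d-1)}$, and the residual surface estimate you declare $\nu$-independent is in fact not. The lower bound $|\nabla G(0)|\gtrsim\nu^{1/(d-1)}$ is correct but very far from sharp: take $d=3$ and $G(y_1,y_2)=y_{1,1}+\epsilon\,y_{2,2}$ (writing $y_j=(y_{j,1},y_{j,2})$), so that $\nabla_{y_1}G=e_1$, $\nabla_{y_2}G=\epsilon e_2$, $\nu=\epsilon$, yet $|\nabla G|=\sqrt{1+\epsilon^2}\sim1$. Here the co-area Jacobian contributes nothing, but the residual surface integral over $M=\{y_{1,1}=-\epsilon\,y_{2,2}\}$ still carries the full $\epsilon^{-1/2}$: as $\epsilon\to0$, $M$ collapses onto $\{y_{1,1}=0\}$, along which one is attempting to integrate the (undefined) trace of $g_1\in L^2(\mathbb{R}^2)$, and the integral is not controlled by $\prod\|g_j\|_2$. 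What $\nu$ actually records is the transversality of $T_0M$ to the fibres of the $d$ projections --- precisely the wedge-product quantity \eqref{radnu} for the parametrised, linearised datum, which in this example evaluates to $\epsilon$ --- not the length of $\nabla G$. Your split is therefore internally inconsistent as well: in the second paragraph you let Theorem~\ref{t:main} ``unwind'' \eqref{radnu} into the determinant hypothesis with ``quantitative constant $\nu$'', which re-introduces a $\nu^{-1/(k-1)}$ that you have already peeled off. In the paper's derivation there is a single appeal: parametrise $\{F=0\}$ with $F(y)=(y_d-y_{d-1}-\cdots-y_1,G(y_1,\ldots,y_{d-1}))$, route the determinant hypothesis into \eqref{radnu}, and read off $\nu^{-1/(d-1)}$ once from Theorem~\ref{t:main}.

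Your dimension count for $d\geq4$ is a perceptive observation and is correct: after parametrising, the ambient dimension is $(d-1)^2-1$, each of the $d$ fibres has dimension $(d-1)(d-2)-1$, and these sum to $(d-1)^2-1$ only when $d=3$. Equivalently, $(d-1)'$ is the scale-critical exponent for this datum and agrees with Theorem~\ref{t:main}'s exponent $k-1=d-1$ only at $d=3$. This does mean that the Corollary calls on more of \cite{BCW} and \cite{BB} than the basis-hypothesis statement reproduced here as Theorem~\ref{t:main}. However, your proposed repair for $d\geq4$ does not close that gap: you propose an induction on scales in which the linear constant ``is then controlled by the wedge-product expression \eqref{affinefinner}'', but \eqref{affinefinner} is an identity that holds precisely under the basis hypothesis you have just observed to fail, so there is nothing for the recursion to be fed by.
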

Inequality \eqref{transrad} is a convolution-type multilinear Radon-like transform estimate of the form \eqref{MR}
with $F(y)=(y_d-y_{d-1}-\cdots-y_1,G(y_1,\hdots,y_{d-1}))$. For a generalisation with symmetric, non-convolution type hypotheses on $F$, see \cite{BBG}.

A further corollary to Theorem \ref{t:main} is the following result concerning multiple convolutions of $L^p$ densities supported on smooth submanifolds $S_1,\hdots,S_k$ of $\mathbb{R}^d$. We shall suppose that $S_1,\hdots,S_k$ are \emph{transversal at the origin} in the sense that their normal spaces at the origin form a basis for $\mathbb{R}^d$.
\begin{cor}[\cite{BCW},\cite{BB}, \cite{BBG}]
If $1 \leq q \leq
\infty$ and $p_j' \leq (k-1)q'$, then
\begin{equation} \label{e:multcon}
\| f_1 d\sigma_1 * \cdots * f_k d\sigma_k \|_{L^q(\mathbb{R}^d)} \lesssim \prod_{j=1}^k \| f_j \|_{L^{p_j}(d\sigma_j)}
\end{equation}
for all $f_j \in L^{p_j}(d\sigma_j)$ supported in a
sufficiently small neighbourhood of the origin.
\end{cor}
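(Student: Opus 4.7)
The plan is to reduce the claim to a nonlinear Brascamp--Lieb inequality by duality and local parametrization, and then to invoke a nonlinear Brascamp--Lieb theorem generalizing Theorem \ref{t:main}, as developed in \cite{BB}, \cite{BBG}. By $L^{q}$--$L^{q'}$ duality, the claim is equivalent to the multilinear form estimate
\[
\Bigl|\int_{S_1\times\cdots\times S_k} g(x_1+\cdots+x_k)\prod_{j=1}^k f_j(x_j)\,d\sigma_1(x_1)\cdots d\sigma_k(x_k)\Bigr|\lesssim \|g\|_{L^{q'}}\prod_{j=1}^k\|f_j\|_{L^{p_j}(d\sigma_j)}
\]
for all test functions $g$. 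I would choose local parametrizations $B_j:U_j\subset\mathbb{R}^{d_j}\to S_j$ with $d_j:=\dim S_j$ and absorb the smooth bounded Jacobian factors, reducing to
\[
\int_U g(B(y))\prod_{j=1}^k \tilde f_j(y_j)\,dy\lesssim \|g\|_{L^{q'}}\prod_{j=1}^k\|\tilde f_j\|_{L^{p_j}},
\]
where $B(y):=B_1(y_1)+\cdots+B_k(y_k)$ and $y=(y_1,\ldots,y_k)\in U\subset\mathbb{R}^{d_1+\cdots+d_k}$. The normal-basis transversality forces $\sum_j d_j=(k-1)d$ and makes the tangent spaces $T_0 S_j$ span $\mathbb{R}^d$, so $B$ is a submersion at the origin; the resulting inequality is a nonlinear Brascamp--Lieb inequality for the $(k+1)$-tuple of maps $(B,\Pi_1,\ldots,\Pi_k)$, where $\Pi_j(y):=y_j$, with exponents $(1/q',1/p_1,\ldots,1/p_k)$.

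Next I would apply multilinear interpolation on the polytope of admissible exponents. The vertex $q=p_1=\cdots=p_k=1$ follows from Fubini, since $\|\mu_1*\cdots*\mu_k\|_{L^1}=\prod_j\|\mu_j\|_{L^1}$. The ``critical'' vertices lie on the face $p_j'=(k-1)q'$ for all $j$, where the scaling identity $(1/q')d=\sum_j d_j/p_j'$ holds with equality --- exactly the balance required by Brascamp--Lieb. At each critical vertex the finiteness condition \eqref{dimension} of \cite{BCCT} for the attached linear BL data can be read off from the submersivity of $B$ together with the Loomis--Whitney structure of $(\Pi_1,\ldots,\Pi_k)$. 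Granting that, the nonlinear Brascamp--Lieb theorem of \cite{BB}, \cite{BBG} furnishes the estimate at each critical vertex, and multilinear interpolation between these vertices and the Fubini vertex covers the full range $p_j'\leq (k-1)q'$.

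The main obstacle is the critical-endpoint step. Theorem \ref{t:main} as stated covers only the strict ``basis'' regime, where the kernels of the linearized maps form a direct sum decomposition of the domain and the common exponent is $1/(k-1)$; here, by contrast, we have $k+1$ maps with overlapping kernels (every $\ker\Pi_j$ meets $\ker dB(0)$ non-trivially) and exponents that are in general unequal. One convenient workaround is to fibre by $B$ and apply Theorem \ref{t:main} on each $(k-2)d$-dimensional slice $\{B(y)=x\}$, where the restricted projections $\Pi_j$ form a genuine Loomis--Whitney configuration and the uniform submersivity of $B$ yields uniformity of the constants in $x$. Alternatively, one may directly invoke the non-basis nonlinear Brascamp--Lieb inequality of \cite{BB}, \cite{BBG}, whose proof follows the same induction-on-scales/recursive-inequality strategy sketched for Theorem \ref{t:main}.
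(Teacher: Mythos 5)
Your overall strategy—dualise, parametrise the $S_j$, recast as a nonlinear Brascamp--Lieb inequality for the $(k+1)$-tuple $(B,\Pi_1,\ldots,\Pi_k)$, and interpolate between the trivial $q=1$ estimate and the critical vertex $q=\infty$, $p_j=(k-1)'$—is the right one, and matches the paper's intention (the paper itself offers no proof here, only the citation to \cite{BCW},\cite{BB},\cite{BBG}). You have also correctly put your finger on the genuine difficulty: the kernels of the linearised maps $dB(0),\Pi_1,\ldots,\Pi_k$ do not form a direct sum decomposition of $\mathbb{R}^{(k-1)d}$ (a dimension count gives $\dim\ker dB(0)+\sum_j\dim\ker\Pi_j=d(k^2-k-1)$, which equals $(k-1)d$ only when $k=2$), so Theorem \ref{t:main} does not apply directly.

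Your workaround (a), however, does not close the gap except in low multilinearity. Fibering by $B$ reduces the $q=\infty$ endpoint to an inequality of the form
\[
\int_{\{B(y)=x\}}\prod_{j=1}^k\bigl(F_j\circ\Pi_j\bigr)^{\frac{k-2}{k-1}}\,d\mathcal{H}^{(k-2)d}\lesssim \prod_{j=1}^k\Bigl(\int F_j\Bigr)^{\frac{k-2}{k-1}},
\]
uniformly in $x$. This is \emph{not} a Loomis--Whitney datum for $k>3$: Theorem \ref{t:main} carries the exponent $\tfrac{1}{k-1}$, not $\tfrac{k-2}{k-1}$, and correspondingly the restricted kernels satisfy $\sum_j\dim\ker(\Pi_j|_{\ker dB(0)})=\sum_j\bigl((k-2)d-d_j\bigr)=d(k^2-3k+1)$, which equals the slice dimension $(k-2)d$ only when $k\in\{2,3\}$. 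For $k\geq 4$ the kernels over-span, the direct-sum hypothesis \eqref{e:directsumnonlin} fails on the slice, and the exponents disagree, so the appeal to Theorem \ref{t:main} on each fibre does not go through. Your workaround (b) points in the right direction—one needs the nonlinear Brascamp--Lieb/Finner-type machinery of \cite{BB},\cite{BBG} that handles general (non-basis) data of the kind arising on the slice—but as written it is an appeal to a theorem rather than a proof. In short: correct reduction, correctly diagnosed obstacle, but the concrete repair offered only works for $k\leq 3$, and the general case genuinely requires the broader nonlinear Brascamp--Lieb results that go beyond Theorem \ref{t:main}.
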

We remark that setting $q=2$, $k=d$ and applying Plancherel's theorem in \eqref{e:multcon} leads to the sharp global $d$-linear restriction estimate
$$\|\widehat{g_1d\sigma_1}\cdots\widehat{g_dd\sigma_d}\|_{L^2(\mathbb{R}^d)}\lesssim \|g_1\|_{(2d-2)'}\cdots\|g_d\|_{(2d-2)'}.$$ While rather modest, this inequality does not follow from Theorem \ref{bcttheoremrevisted} due to the logarithmic factor present there.

Finally we remark that Theorem \ref{t:main} has been successfully applied to the well-posedness of the Zakharov system; see \cite{BHT}, \cite{BHHT} and \cite{BH}.

\subsection*{Further reading} There are a number of important further results in the work of Bourgain and Guth \cite{BG} which, for reasons of space, we have not discussed in this article.
In particular, the Bourgain--Guth method discussed in Sections \ref{twothree} and \ref{fourfour} continues to apply in the setting of H\"ormander-type oscillatory integrals and curvy Kakeya inequalities, yielding new (and sometimes sharp) linear estimates. We refer to \cite{BG} for further reading.

\bibliographystyle{amsplain}

\end{document}